\newcommand{\be}{\begin{equation}}
\newcommand{\ee}{\end{equation}}
\newcommand{\ignore}[1]{}
\newcommand{\vm}[1]{\mathbf{#1}}
\newtheorem{thm}{Theorem.}
\newtheorem{cor}[thm]{Corollary}
\newtheorem{rmk}[thm]{Remark}
\newcommand{\bbR}{\mathbb{R}}
\newcommand{\vxi}{\boldsymbol{\xi}}
\newcommand{\tvx}{\tilde{\vm{x}}}
\title{\bf New Epitaxial Thin Film Models and numerical approximation}
\author{
Wenbin Chen\thanks{School of Mathematical Science, Fudan University, Shanghai, P.R.China, 200433, {\tt wbchen@fudan.edu.cn}.
Supported by the Natural Science Foundation of China (11171077), Key Project National Science Foundation of China (91130004), the Ministry of Education of China and
the State Administration of Foreign Experts Affairs of China under the 111 project grant (B08018).}
 \and
 Zhenhua Chen\thanks{School of Mathematical Science, Fudan University, Shanghai, P.R.China, 200433, {\tt chenzhenhua@fudan.edu.cn}.}
\and
 Jin Cheng\thanks{School of Mathematical Science, Fudan University, Shanghai, P.R.China, 200433, {\tt jcheng@fudan.edu.cn}.  Supported by the Natural Science Foundation of China (11331004).}
\and
Yanqiu Wang\thanks{Department of Mathematics, Oklahoma State University, Stillwater OK 74078, {\tt yanqiu.wang@okstate.edu}.}
 }
\begin{document}
\maketitle

\begin{abstract}
This paper concerns new continuum phenomenological model for epitaxial thin film growth
with three different forms of the Ehrlich-Schwoebel current.
Two of these forms were first proposed by Politi and Villain \cite{PV96} and then studied by Evans, Thiel and Bartelt \cite{Evans}.
The other one is completely new.
Following the techniques used in Li and Liu \cite{LL03}, we present rigorous analysis of the well-posedness, regularity and time stability
for the new model.
We also studied both the global and the local behavior of the surface roughness in the growth process.
The new model differs from other known models in that it features a linear convex part and a nonlinear concave part,
and thus by using a convex-concave time splitting scheme, one can naturally
build unconditionally stable semi-implicit numerical discretizations with linear implicit parts, which is much easier to implement than conventional models
requiring nonlinear implicit parts.
Despite this fundamental difference in the model,
numerical experiments show that the nonlinear morphological instability of the new model agrees well with results
of other models published in \cite{LL03}, which indicates that the new model correctly captures the essential morphological states
in the thin film growth process.
\end{abstract}
\begin{keywords}
epitaxial thin film growth, Ehrlich-Schwoebel effect, convex-concave splitting method, semi-implicit time discretization.
\end{keywords}
\begin{AMS}
35K55, 65M12, 65M60.
\end{AMS}

\pagestyle{myheadings}
\thispagestyle{plain}

\section{Introduction}
In epitaxial thin film growth,
the phenomenological  continuum evolution of film height $h(\vm{x}, t)$
at lateral position $\vm{x}\in \Omega\subset\bbR^2$ and time $t\in (0,T)$ is governed by the equation \cite{MK04}
\begin{eqnarray}\label{eveq}
\partial_t h = \zeta-\nabla\cdot \vm{J},
\end{eqnarray}
where $\zeta(\vm{x}, t)$ is a given function related to the deposition rate,
and $\vm{J}(\vm{x}, t)$ is the lateral mass current of adatoms across the film surface.
The current $\vm{J}$ consists of an equilibrium (EQ) part and a non-equilibrium (NE) part, denoted by
$\vm{J} = \vm{J}_{EQ} + \vm{J}_{NE}$.
For the equilibrium part, we adopt the linearized model of Mullins \cite{Mullins59} and set $\vm{J}_{EQ} = K_{EQ} \nabla (\Delta h)$,
where the constant $K_{EQ}\ge 0$ is usually very small.
The more interesting non-equilibrium surface current $\vm{J}_{NE}$ depicts the interaction of adatoms with surface steps, and here we follow the model
presented by Evans, Thiel and Bartelt \cite{Evans}:
$$
\vm{J}_{NE}= \vm{J}_{DF} + \vm{J}_{ES} + \vm{J}_{RELAX},
$$
where $\vm{J}_{DF}=-\gamma \nabla h$, with constant $\gamma\ge 0$, is the stabilizing downward funneling (DF) current;
$\vm{J}_{{ES}}$ is the de-stabilizing uphill Ehrlich-Schwoebel (ES) current, which will be discussed in further details later;
and $\vm{J}_{RELAX} = \kappa \nabla (\Delta h)$, with constant $\kappa\ge 0$, is a phenomenological relaxation current artificially added when $\vm{J}_{EQ}\approx 0$,
in order to counteract the increasingly violent unstable behavior caused by $\vm{J}_{ES}$.
Mathematically, one can combine $\vm{J}_{RELAX}$ with $\vm{J}_{EQ}$ to get
$\vm{J}_{EQ} + \vm{J}_{RELAX} = \epsilon^2 \nabla (\Delta h)$, with $\epsilon^2 = \kappa + K_{EQ}$.

Now let us examine $\vm{J}_{ES}$, which models the Ehrlich-Schwoebel (ES) effect.
The ES effect states that adatoms must overcome a higher energy barrier in order to attach to a step from an
upper terrace than from a lower terrace. Thus it favors an uphill current and consequently causes the formation
as well as steepening of mounds \cite{GF,LL03,RLS69,RE}.
Due to its nonlinear nature, the ES current brings interesting surface morphological instability,
but imposes difficulty upon the mathematical analysis.
To our knowledge, there exist three ES models which have been mathematically investigated in terms of well-posedness and properties of the solution:
\begin{enumerate}
\item Infinite ES barrier model proposed in \cite{Villain91} with ES current $\vm{J}_{I, ES} = \frac{\nabla h}{|\nabla h|^2}$;
\item Finite ES barrier model proposed in \cite{Johnson94} with ES current  $\vm{J}_{F,ES} =\frac{\nabla h}{1+|\nabla h|^2}$;
\item Finite ES barrier with slope selection model (see \cite{LL03}) with ES current  $\vm{J}_{FSS,ES} = (1-|\nabla h|^2)\nabla h$;
\end{enumerate}
where $|\cdot|$ stands for the Euclidean length of a vector.
Note that $\vm{J}_{FSS,ES}$ and $\vm{J}_{I, ES}$ can be viewed as approximations to $\vm{J}_{F,ES}$ when $|\nabla h|\ll 1$ and $|\nabla h|\gg 1$, respectively.
In \cite{LL03}, well-posedness and long time stability have been established for the two finite ES barrier models.
We point out that a main difference, from the mathematical point of view, between the infinite ES barrier model and the finite ES barrier models is that,
$\vm{J}_{I, ES}$ is not continuous at $\nabla h = \vm{0}$, while $\vm{J}_{F,ES}$ and $\vm{J}_{FSS,ES}$ remain continuous for all $\nabla h$.
This is why rigorous mathematical analysis in \cite{LL03} only works for the two finite ES barrier models.

In this paper, we are interested in a different form of ES current first proposed by Politi and Villain \cite{PV96},
and later studied by Evans, Thiel and Bartelt \cite{Evans}:
\begin{equation}\label{J1}
\vm{J} _{1,ES}=\alpha_1\left(\frac{\nabla h}{p+|\nabla h|}
-\frac{\nabla h}{q+|\nabla h|}\right),
\end{equation}
where $\alpha_1 > 0$ and $0< p < q$ are given parameters. 
There are more physical parameters involved in  the real models in \cite{Evans}, and we only try to describe one simple
but essential model here.
Physical meanings and practical choices of these parameters will be presented in Section \ref{Snumer}.
For now we focus on the mathematical properties of the ES current.

When $|\nabla h|\ll p$, by eliminating high order terms one has
$$
\vm{J} _{1,ES}= \alpha_1 \frac{q-p}{pq + (p+q)|\nabla h| + |\nabla h|^2} \nabla h
 \approx \left(\alpha_1\frac{q-p}{p}\right)\, \frac{\nabla h}{q + (p+q)|\nabla h|/p }.
$$
Thus we introduce a variation of the ES current
\begin{equation}\label{J2}
\displaystyle \vm{J} _{2,ES}
=\alpha_2\frac{\nabla h}{q+(p+q)|\nabla h|/p},
\end{equation}
with $\alpha_2 = \alpha_1 \frac{q-p}{p}$. To our knowledge, this model of the ES current is brand new.

Similarly, when $|\nabla h|\gg p$, one has
$$
\vm{J} _{1,ES}= \alpha_1 \frac{q-p}{p(q+2|\nabla h|) + (q-p)|\nabla h| + |\nabla h|^2} \nabla h
   \approx \alpha_1(q-p)\, \frac{\nabla h}{(q-p)|\nabla h| + |\nabla h|^2},
$$
because $p(q+2|\nabla h|) = p^2 + p(q-p) + 2p|\nabla h| \ll (q-p)|\nabla h| + |\nabla h|^2$.
This allows us to introduce another variation of the ES current
\begin{equation}\label{J3}
\displaystyle \vm{J} _{3,ES}
=\alpha_3\frac{\nabla h}{(q-p)|\nabla h| + |\nabla h|^2},
\end{equation}
with $\alpha_3 = \alpha_1(q-p)$.
A much simpler one-dimensional form of $\vm{J} _{3,ES}$ has been proposed and studied in \cite{PV96, Evans}.
We believe this is the first time that the multi-dimensional form of $\vm{J} _{3,ES}$ is presented.

The main purpose of this paper is to analyze mathematically the epitaxial thin film growth equation (\ref{eveq})
with ES currents $\vm{J} _{k,ES}$, for $k=1,2$ and $3$.
Note that $\vm{J} _{3,ES}$ is not continuous at $\nabla h = \vm{0}$, while $\vm{J} _{1,ES}$ and $\vm{J} _{2,ES}$
are continuous for all $\nabla h$.
In this sense, one may compare $\vm{J} _{3,ES}$ with the infinite ES current $\vm{J}_{I,ES}$. Similarly, $\vm{J} _{1,ES}$ is comparable to
the finite ES current without slope selection $\vm{J}_{F,ES}$, and $\vm{J} _{2,ES}$ is comparable to the finite ES current with slope selection $\vm{J}_{FSS,ES}$.
Later it shall become clear that the models $\vm{J} _{k,ES}$, for $k=1,2,3$, have built-in and significant differences from
$\vm{J}_{F,ES}$, $\vm{J}_{FSS,ES}$, and $\vm{J}_{I,ES}$ in the mathematical analysis. Though interestingly, numerical results
will show that they give very similar nonlinear morphological evolution processes, which is a good sign as they all model the same physical phenomenon.

For simplicity, let $\Omega$ be a rectangular domain and set the $\Omega$-periodic boundary condition on $h$.
Following the previous discussions, Equation (\ref{eveq}) equipped with initial and boundary conditions can be written as
\begin{equation} \label{eq:diffeq}
\begin{aligned}
&\partial_t h = \zeta+\gamma\Delta h-\epsilon^2\Delta^2 h- \nabla\cdot \vm{J}_{ES}, \qquad\textrm{in } \Omega\times (0,T],\\
&h(\cdot,t) \textrm{ is }\Omega\textrm{-periodic for all }t\in [0,T], \\
&h(\vm{x},0) = h_0(\vm{x}) \textrm{ for all }\vm{x}\in \Omega,
\end{aligned}
\end{equation}
where $\vm{J}_{ES}$ is chosen from $\vm{J}_{k, ES}$ for $k=1,2,3$.
Here and throughout the rest of the paper, we shall only use subscript $k$ when individual features of the ES current from different models
are needed. Otherwise, the ES current will simply be denoted as $\vm{J}_{ES}$, which can be any applicable
choice from $\vm{J}_{k, ES}$ for $k=1,2,3$.
Because of the singularity of $\vm{J} _{3,ES}$ at $\nabla h = \vm{0}$, rigorous mathematical analysis in the rest of this paper will only be performed
for $\vm{J} _{1,ES}$ and $\vm{J} _{2,ES}$. Though we still keep $\vm{J} _{3,ES}$ as an alternative option whenever applicable.
For compatibility purpose, obviously $h_0$ and $\zeta$ should also be $\Omega$-periodic.

Next, we introduce a surface roughness indicator and also argue that it suffices to study (\ref{eq:diffeq})
under the assumption that $\zeta$ and $h_0$ are mean value free on $\Omega$.
For simplicity, denote $\fint_{\Omega} f\, dx = \frac{1}{|\Omega|} \int_{\Omega} f\, dx$ for any given function $f$.
Define the average height function $\bar{h}(t)$,
for $t\in [0,T]$ by $\bar{h} = \fint_{\Omega} h\, dx$.
Then, the surface roughness is estimated by \cite{LL04}
$$
\omega(t) = \sqrt{\fint_{\Omega} |h(\vm{x},t) - \bar{h}(t)|^2\, dx },\qquad\textrm{for } t\in [0,T].
$$

Similarly, denote $\bar{\zeta} = \fint_{\Omega} \zeta \, dx$.
By taking the average integral of the differential equation in (\ref{eq:diffeq}) and using the $\Omega$-periodic boundary condition of $h$,
it is clear that $\bar{h}$ satisfies an ordinary differential equation
$$\partial_t \bar{h} = \bar{\zeta}.
$$
Subtracting this equation from (\ref{eq:diffeq}) and noticing that any spatial derivative of $\bar{h}$ is $0$,
one immediately gets
$$
\partial_t (h-\bar{h}) = (\zeta-\bar{\zeta}) + \gamma\Delta(h-\bar{h}) - \epsilon^2 \Delta^2(h-\bar{h}) - \nabla\cdot \vm{J}_{ES}(\nabla(h-\bar{h})).
$$
In other words, $h-\bar{h}$ satisfies Equation (\ref{eq:diffeq}), with $\zeta$ in the right-hand side replaced by $\zeta-\bar{\zeta}$,
the mean-value free component of $\zeta$. Thus studying $h-\bar{h}$ is equivalent to studying $h$ with the assumption
that $\zeta$ and $h_0$ are mean value free.
In this case the surface roughness indicator becomes
$$
\omega(t) = \sqrt{\fint_{\Omega} |h(\vm{x},t)|^2\, dx } = \frac{1}{\sqrt{|\Omega|}} \|h(\cdot, t)\|_{L^2(\Omega)},\qquad\textrm{for } t\in [0,T].
$$
Throughout the rest of this paper, we shall assume that $\zeta$ and $h_0$ are mean value free on $\Omega$,
and consequently so is $h$.
Note that a typical example is $\zeta \equiv 0$.

Using techniques similar to those in \cite{LL03}, i.e., the well-known Galerkin approximation and compactness argument approach
of Lions \cite{Lions},
we will establish the existence, uniqueness, and regularity of the weak solution to (\ref{eq:diffeq}).
The theoretical proof, although standard, relies heavily on particular properties of the ES current $\vm{J}_{ES}$.
One of the main contribution of this paper is to establish these properties for $\vm{J}_{k,ES}$, with $k=1,2$, and
part of the properties for $\vm{J}_{3,ES}$.

We will also establish global and local bounds for the surface roughness $\omega(t)$. The epitaxial thin film growth is in general
a coarsening process, for which $\omega(t)$ is an important indicator. In the early stage of the growth,
a typical rough-smooth-rough pattern \cite{GZ98,LL03} is often observed.
Hence theoretical and numerical study of $\omega(t)$ is important to the understanding of the surface morphological evolution.
Besides the roughness indicator, the growth process is always energy driven in the sense that the dynamics is the gradient flow of
a certain energy functional \cite{Evans10,Evans,RVK06,RVK02,RVK03}. We will show that the energy functional remains non-increasing
with our ES current models, when the deposition rate $\zeta \equiv 0$.

Numerical discretization will be done using the convex-concave splitting technique.
This technique was first proposed by Eyre to solve the Cahn-Hilliard and Allen-Cahn equations \cite{Eyre98}. 
Its main idea is to treat the ``convex" part implicitly and the ``concave" part explicitly in the time discretization.
From another point of view, this is equivalent to solving a minimization problem of a strictly convex and coercive functional
known as the modified energy functional \cite{WW09,CW,HW09}.
Eyre's convex-concave splitting scheme is first-order accurate in time and unconditionally stable.
Later, higher order time schemes have been constructed using the similar idea \cite{HW09,SW11}.
For thin film epitaxial growth with ES currents $\vm{J}_{F,ES}$ and $\vm{J}_{FSS,ES}$, the convex-concave splitting
inevitably generates a nonlinear convex part \cite{CW, CWmixed}, though alternative schemes with linear explicit parts
can derived using other techniques \cite{WS, Xu06}. A significant advantage of the new ES models $\vm{J}_{k,ES}$, for $k=1,2,3$,
is that, they naturally generate linear convex parts and nonlinear concave parts in the splitting.
Hence the direct application of the convex-concave splitting technique will result in a linear problem to solve at
each time step.
Spatial discretization is done by a Fourier spectral Galerkin method.

The rest of the article is organized as follows. In Section
\ref{Smodel}, we establish the existence, uniqueness, and regularity results of the weak solution
to the model problem. In addition, bounds of the roughness indicator $\omega(t)$  and analysis of the energy functional will also be given in this section.
In Section
\ref{Sprop},  a semi-implicit fully-discrete numerical schemes using the convex-concave splitting technique
is presented. We show that the scheme is unconditionally stable. Convergence rate is also proved.
In Section \ref{Snumer}, we present numerical results which show similar morphological instability
as results given in \cite{LL03}.

\section{Well-posedness of the model problem}\label{Smodel}
In this section, we study the well-posedness of Equation (\ref{eq:diffeq}).
As mentioned earlier, rigorous analysis will only be done when the ES current $\vm{J}_{ES}$ is taken to be
either $\vm{J}_{1,ES}$ or $\vm{J}_{2,ES}$. We shall first prove a few properties of the ES current in Section \ref{subsec:Smodel1},
in which we conveniently use a subscript $k=1,2$ to denote whether $\vm{J}_{ES}$ is taken to be $\vm{J}_{1,ES}$ or $\vm{J}_{2,ES}$,
as the proof depends on the individual definitions of $\vm{J}_{k,ES}$.
It is worth to point out that $\vm{J}_{3,ES}$ also possesses some similar properties, especially the most important
convex-concave splitting one. This is why we do not want to completely leave it out,
and the properties of $\vm{J}_{3,ES}$ will be mentioned in a separate remark.
After these properties are established by $k$-specific proofs, for simplicity we will drop the subscript $k$
when the analysis does not depend on $k$.

\subsection{Properties of the function $\vm{J}_{ES}$} \label{subsec:Smodel1}

We start from $\vm{J}_{k,ES}$ for $k=1,2$.
Note that $\vm{J}_{k,ES}$ depends solely on $\nabla h$. It is convenient to view them as functions
$\vm{J}_{k,ES}(\vm{m})$ taking values at $\vm{m} = \nabla h$.
Moreover, by definition, we can write $\vm{J}_{k,ES}(\vm{m}) = \Phi_k(|\vm{m}|) \vm{m}$, where
$$
\Phi_1(s) = \alpha_1 \frac{q-p}{(p+s)(q+s)}, \quad\textrm{and}\quad
\Phi_2(s) = \alpha_2 \frac{1}{q+(p+q)s/p},
$$
for all $s\ge 0$.

\smallskip
\begin{lemma} \label{lem:PhiBounds}
For all $s\ge 0$ and $k=1,2$, one has
$$
0< \Phi_k(s) \le C,\qquad  -C\le \Phi_k'(s) <0,
$$
where $C$ is a positive general constant depending only on $\alpha_k$, $p$, and $q$.
\end{lemma}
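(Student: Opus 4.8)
The plan is to prove each bound by direct computation from the explicit formulas for $\Phi_1$ and $\Phi_2$. Since both functions are rational in $s$ with no poles on $[0,\infty)$ (the denominators are strictly positive there because $p,q>0$), everything reduces to elementary estimates. First I would handle the bounds on $\Phi_k$ itself: for $\Phi_1(s) = \alpha_1(q-p)/((p+s)(q+s))$, positivity is immediate since $q>p>0$ and $s\ge 0$, and the upper bound follows by noting the denominator is increasing in $s$, so $\Phi_1(s)\le \Phi_1(0) = \alpha_1(q-p)/(pq)$. Likewise $\Phi_2(s) = \alpha_2/(q+(p+q)s/p)$ is positive and decreasing, so $\Phi_2(s)\le \Phi_2(0) = \alpha_2/q$. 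Taking $C$ to be the larger of these (and adjusting at the end to also dominate the derivative bounds) gives $0<\Phi_k(s)\le C$.

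Next I would compute the derivatives. For $\Phi_2$, $\Phi_2'(s) = -\alpha_2 (p+q)/p \cdot (q+(p+q)s/p)^{-2}$, which is manifestly negative; its absolute value is maximized at $s=0$, giving $|\Phi_2'(s)|\le \alpha_2(p+q)/(pq^2)$. For $\Phi_1$, differentiating the product in the denominator, $\Phi_1'(s) = -\alpha_1(q-p)\big((p+s)+(q+s)\big)/\big((p+s)^2(q+s)^2\big) = -\alpha_1(q-p)(p+q+2s)/((p+s)^2(q+s)^2)$. The numerator $p+q+2s$ is positive, so $\Phi_1'(s)<0$ for all $s\ge 0$. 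For the lower bound on $\Phi_1'$ (i.e. upper bound on $|\Phi_1'|$), I would observe that $(p+s)^2(q+s)^2 \ge p^2 q^2$ while the numerator $p+q+2s$ grows only linearly, so the quotient is bounded; more carefully, $|\Phi_1'(s)| = \alpha_1(q-p)(p+q+2s)/((p+s)^2(q+s)^2)$, and since $(p+s)^2(q+s)^2\ge (p+s)(q+s)\cdot pq \ge (p+s)\cdot pq \cdot q$ and $p+q+2s \le 2(q+s) + p \le$ (something linear in $p+s$), one can extract a uniform bound — alternatively just note $g(s):=(p+q+2s)/((p+s)^2(q+s)^2)$ is continuous on $[0,\infty)$ and tends to $0$ as $s\to\infty$, hence is bounded, so $|\Phi_1'|\le C$.

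I would then set $C$ to be the maximum of all the constants appearing above, which depends only on $\alpha_k$, $p$, $q$ as claimed, completing the proof.

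I do not anticipate any real obstacle here; the only mild care needed is to make the upper bound on $|\Phi_1'(s)|$ genuinely explicit rather than merely invoking continuity and decay, but this is routine. A clean way is to write $|\Phi_1'(s)| \le \alpha_1(q-p)\,\frac{p+q+2s}{(p+s)^2(q+s)^2}$ and bound $p+q+2s \le 2(p+s)(q+s)/\min(p,q)$ — valid since $(p+s)(q+s)\ge (p+s)\min(p,q)$ and also $\ge (q+s)\min(p,q)$, so their sum controls $(p+s)+(q+s)+$ lower order — yielding $|\Phi_1'(s)|\le 2\alpha_1(q-p)/(\min(p,q)(p+s)(q+s)) \le 2\alpha_1(q-p)/(\min(p,q)pq)$. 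Then $C := \max\{\alpha_1(q-p)/(pq),\ \alpha_2/q,\ 2\alpha_1(q-p)/(pq\min(p,q)),\ \alpha_2(p+q)/(pq^2)\}$ works uniformly for $k=1,2$.
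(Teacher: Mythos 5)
Your proposal is correct and follows essentially the same route as the paper: the bounds on $\Phi_k$ are read off from monotonicity of the denominators, and the derivative bounds come from the explicit formulas $\Phi_1'(s)=-\alpha_1(q-p)(p+q+2s)/((p+s)^2(q+s)^2)$ and $\Phi_2'(s)=-\alpha_2\frac{(p+q)/p}{(q+(p+q)s/p)^2}$, estimated at their worst case. Your final constants (with $\min(p,q)=p$) coincide with the paper's $2\alpha_1(q-p)/(p^2q)$ and $\alpha_2(p+q)/(pq^2)$.
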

\begin{proof}
The bounds for $\Phi_k(s)$, $k=1,2$ are obvious, and the bounds for $\Phi_k'(s)$ follows immediately from
$$
\begin{aligned}
\Phi_1'(s) &= -\alpha_1\frac{2(q-p)((p+q)/2+s)}{(p+s)^2(q+s)^2} \ge -\alpha_1\frac{2(q-p)}{(p+s)^2(q+s)} \ge -\alpha_1\frac{2(q-p)}{p^2q}, \\
\Phi_2'(s) &= -\alpha_2\frac{(p+q)/p}{(q+(p+q)s/p)^2} \ge -\alpha_2\frac{p+q}{pq^2}.
\end{aligned}
$$
\end{proof}

Another important observation is that both $\vm{J}_{k,ES}(\vm{m})$, for $k=1,2$, are gradient fields.
Indeed, define functions $G_k: \: \bbR^2\to \bbR$ by
$$
\frac{1}{\alpha_1}G_1(\vm{m}) = p\ln(p+|\vm{m}|) - q\ln(q+|\vm{m}|) \quad\textrm{and}\quad
\frac{1}{\alpha_2}G_2(\vm{m}) = -\frac{p}{p+q}|\vm{m}| + \frac{p^2q}{(p+q)^2} \ln \left( \frac{pq}{p+q} + |\vm{m}| \right),
$$
for all $\vm{m}\in\bbR^2$.
Now we examine the derivatives of $G_k(\vm{m})$ with respect to variable $\vm{m}$.
In order to distinguish such derivatives with the spatial derivatives, we use $\nabla_F G_k(\vm{m})$
and $\nabla_F^2 G_k(\vm{m})$ to denote the gradient and the Hessian of $G_k(\vm{m})$ with respect to $\vm{m}$,
while reserving the notation $\nabla$ and $\nabla^2$ for gradient and Hessian with respect to the spatial variable $\vm{x}$.

\smallskip
\begin{lemma} \label{lem:gradHessianGk}
For $k=1,2$, one has $G_k\in C^2(\bbR^2)$. Their gradients satisfy
$$
\nabla_F G_k(\vm{m}) = - \Phi_k(|\vm{m}|) \vm{m},
$$
and their Hessians satisfy
\begin{equation} \label{eq:Hessians}
\begin{aligned}
\nabla_F^2 G_1(\vm{m}) &= -\alpha_1\frac{q-p}{(p+|\vm{m}|)(q+|\vm{m}|)}I + \alpha_1\left(\frac{1}{(p+|\vm{m}|)^2} - \frac{1}{(q+|\vm{m}|)^2}\right) \frac{\vm{m} \otimes \vm{m}}{|\vm{m}|}, \\
\nabla_F^2 G_2(\vm{m}) &= -\alpha_2\frac{p}{pq+(p+q)|\vm{m}|} I + \alpha_2\frac{p(p+q)}{(pq+(p+q)|\vm{m}|)^2} \, \frac{\vm{m} \otimes \vm{m}}{|\vm{m}|},
\end{aligned}
\end{equation}
where $I$ is the $2\times 2$ identity matrix and $\vm{m} \otimes \vm{m}$ is a $2\times 2$ matrix defined by $\vm{m}\vm{m}^T$,
in which $\vm{m}$ is considered as a column vector.
\end{lemma}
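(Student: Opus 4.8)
The plan is to exploit the fact that each $G_k$ is radial: $G_k(\vm{m}) = g_k(|\vm{m}|)$ with
$$
g_1(s) = \alpha_1\left(p\ln(p+s) - q\ln(q+s)\right), \qquad
g_2(s) = \alpha_2\left(-\frac{p}{p+q}\,s + \frac{p^2 q}{(p+q)^2}\ln\!\left(\frac{pq}{p+q}+s\right)\right),
$$
both of which are $C^\infty$ on $[0,\infty)$. On $\bbR^2\setminus\{\vm{0}\}$ the map $\vm{m}\mapsto|\vm{m}|$ is smooth, so $G_k$ is automatically $C^\infty$ there, and the chain rule gives $\nabla_F G_k(\vm{m}) = g_k'(|\vm{m}|)\,\vm{m}/|\vm{m}|$. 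A short computation shows $g_k'(s) = -s\,\Phi_k(s)$ for $k=1,2$ --- for instance, for $k=1$ one reduces $\frac{p}{p+s}-\frac{q}{q+s}$ to $\frac{(p-q)s}{(p+s)(q+s)}$, and the $k=2$ identity follows in the same way after clearing denominators. Hence $\nabla_F G_k(\vm{m}) = -\Phi_k(|\vm{m}|)\,\vm{m}$ on $\bbR^2\setminus\{\vm{0}\}$.

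Differentiating this vector field on $\bbR^2\setminus\{\vm{0}\}$, and using $\nabla_F|\vm{m}| = \vm{m}/|\vm{m}|$ together with $\nabla_F\vm{m} = I$, the product rule yields
$$
\nabla_F^2 G_k(\vm{m}) = -\Phi_k(|\vm{m}|)\,I - \Phi_k'(|\vm{m}|)\,\frac{\vm{m}\otimes\vm{m}}{|\vm{m}|}.
$$
Substituting the explicit formulas for $\Phi_k$ together with the expressions for $\Phi_k'$ already obtained in the proof of Lemma~\ref{lem:PhiBounds}, and simplifying (for $k=1$ via the identity $\frac{1}{(p+s)^2}-\frac{1}{(q+s)^2} = \frac{(q-p)(p+q+2s)}{(p+s)^2(q+s)^2}$), reproduces exactly the two formulas in (\ref{eq:Hessians}).

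The only delicate point is the origin: note that each individual term of $g_k$ fails to generate a $C^1$ function of $\vm{m}$ at $\vm{m}=\vm{0}$ --- e.g.\ $\nabla_F\ln(p+|\vm{m}|) = \vm{m}/(|\vm{m}|(p+|\vm{m}|))$ has no limit there --- and it is only the particular combination, designed so that $g_k'(0)=0$, that is smooth. I would handle this with the standard extension lemma: a continuous map that is $C^1$ off a point and whose derivative admits a limit at that point is $C^1$ there too, with derivative equal to the limit. Applied first to $G_k$ itself --- which is continuous at $\vm{0}$ because $g_k$ is, and whose gradient $\nabla_F G_k(\vm{m}) = -\Phi_k(|\vm{m}|)\vm{m}$ tends to $\vm{0}$ as $\vm{m}\to\vm{0}$ by the boundedness of $\Phi_k$ from Lemma~\ref{lem:PhiBounds} --- this gives $G_k\in C^1(\bbR^2)$ with $\nabla_F G_k(\vm{0})=\vm{0}$. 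Applied a second time to the vector field $\nabla_F G_k$, whose Jacobian (by the displayed Hessian formula) tends to $-\Phi_k(0)\,I$ as $\vm{m}\to\vm{0}$, since $\Phi_k'$ is bounded near $0$ and $\|\vm{m}\otimes\vm{m}\|/|\vm{m}| = |\vm{m}|\to 0$, it gives $\nabla_F G_k\in C^1(\bbR^2)$, i.e.\ $G_k\in C^2(\bbR^2)$, with $\nabla_F^2 G_k(\vm{0}) = -\Phi_k(0)\,I$. The computations away from the origin are routine differentiations; the behavior at the origin is the real obstacle, and it is exactly where the specific algebraic form of $G_k$, namely the vanishing of $s\,\Phi_k(s)$ as $s\to 0$, is used.
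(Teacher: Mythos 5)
Your proposal is correct and follows essentially the same route as the paper: an elementary chain-rule computation of $\nabla_F G_k$ and $\nabla_F^2 G_k$ away from the origin (the paper differentiates $\vm{m}/(c+|\vm{m}|)$ directly where you differentiate the radial profile $g_k$ and then substitute $\Phi_k$, $\Phi_k'$, but these are the same calculation, and your verifications $g_k'(s)=-s\,\Phi_k(s)$ and $-\Phi_k'(s)$ matching the coefficients of $\vm{m}\otimes\vm{m}/|\vm{m}|$ in (\ref{eq:Hessians}) all check out). The one place you go beyond the paper is the treatment of $\vm{m}=\vm{0}$: the paper's cited identity $\nabla_F|\vm{m}|=\vm{m}/|\vm{m}|$ is undefined there, and your two applications of the $C^1$-extension lemma --- using the boundedness of $\Phi_k$ and $\Phi_k'$ from Lemma \ref{lem:PhiBounds} to show that the gradient and the Hessian extend continuously, with $\nabla_F^2 G_k(\vm{0})=-\Phi_k(0)I$ --- supply exactly the justification for the assertion $G_k\in C^2(\bbR^2)$ that the paper leaves implicit.
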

\begin{proof}
The proof is elementary. One only needs to use the fact that
$\nabla_F |\vm{m}| = \vm{m}/|\vm{m}|$ and $\nabla_F \frac{\vm{m}}{c+|\vm{m}|} = \frac{1}{c+|\vm{m}|}I - \frac{1}{(c+|\vm{m}|)^2}\,\frac{\vm{m} \otimes \vm{m}}{|\vm{m}|}$, for all $c> 0$,
to compute $\nabla_F G_k(\vm{m})$ and $\nabla_F^2 G_k(\vm{m})$.
\end{proof}

\smallskip
\begin{cor} \label{cor:Lipschitz}
We clearly have $\vm{J}_{k,ES}\in C^1(\bbR^2)$ for $k=1,2$, and hence they are locally Lipschitz.
\end{cor}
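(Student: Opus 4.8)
The plan is to obtain both assertions as immediate consequences of Lemma~\ref{lem:gradHessianGk}. First I would recall from that lemma the identity $\vm{J}_{k,ES}(\vm{m}) = \Phi_k(|\vm{m}|)\,\vm{m} = -\nabla_F G_k(\vm{m})$, so that $\vm{J}_{k,ES}$ is, up to sign, the gradient field of $G_k$. Since Lemma~\ref{lem:gradHessianGk} already asserts $G_k \in C^2(\bbR^2)$ --- the Hessian formulas (\ref{eq:Hessians}) exhibiting continuous matrix entries on all of $\bbR^2$, including the origin, where the factor $\vm{m}\otimes\vm{m}/|\vm{m}|$ is $O(|\vm{m}|)$ and hence extends continuously with value $\vm{0}$ --- it follows at once that $\nabla_F G_k \in C^1(\bbR^2)$, i.e.\ $\vm{J}_{k,ES} \in C^1(\bbR^2)$. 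I would remark in passing that one may instead verify $C^1$ directly from $\vm{J}_{k,ES}(\vm{m}) = \Phi_k(|\vm{m}|)\vm{m}$, using that $\Phi_k$ extends as a smooth function past $s=0$ and that the non-smoothness of $|\vm{m}|$ at the origin is absorbed by the vanishing factor $\vm{m}$; but passing through $G_k$ avoids this case analysis.

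To deduce local Lipschitz continuity I would use the standard fact that a $C^1$ map is locally Lipschitz. Concretely: fix $\vm{m}_0\in\bbR^2$ and a closed ball $\overline{B} = \overline{B_r(\vm{m}_0)}$; since $\nabla_F\vm{J}_{k,ES} = -\nabla_F^2 G_k$ is continuous, it is bounded on the compact convex set $\overline{B}$, say by $L$, and then for $\vm{m}_1,\vm{m}_2\in\overline{B}$ the fundamental theorem of calculus along the segment joining them yields $|\vm{J}_{k,ES}(\vm{m}_1)-\vm{J}_{k,ES}(\vm{m}_2)| \le L\,|\vm{m}_1-\vm{m}_2|$. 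Since $\vm{m}_0$ was arbitrary, $\vm{J}_{k,ES}$ is locally Lipschitz. In fact, combining the explicit formulas (\ref{eq:Hessians}) with the bounds of Lemma~\ref{lem:PhiBounds}, one checks that $\|\nabla_F^2 G_k(\vm{m})\|$ is bounded uniformly in $\vm{m}\in\bbR^2$ (the scalar prefactors of $\vm{m}\otimes\vm{m}/|\vm{m}|$ decay fast enough to offset the growth of $|\vm{m}|$), so $\vm{J}_{k,ES}$ is actually globally Lipschitz on $\bbR^2$ --- a slightly stronger statement that will be convenient in later estimates.

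There is no real obstacle here: the corollary merely repackages Lemma~\ref{lem:gradHessianGk}, and the only point deserving a moment's care is the continuity of the Hessian at $\vm{m}=\vm{0}$, which is already visible from the $O(|\vm{m}|)$ behavior of the $\vm{m}\otimes\vm{m}/|\vm{m}|$ term in (\ref{eq:Hessians}).
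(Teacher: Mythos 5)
Your proposal is correct and follows exactly the route the paper intends: the corollary is stated without proof as an immediate consequence of Lemma~\ref{lem:gradHessianGk} ($\vm{J}_{k,ES}=-\nabla_F G_k$ with $G_k\in C^2(\bbR^2)$), plus the standard fact that $C^1$ maps are locally Lipschitz. Your added observations — the continuity of the Hessian at $\vm{m}=\vm{0}$ via the $O(|\vm{m}|)$ decay of $\vm{m}\otimes\vm{m}/|\vm{m}|$, and the global Lipschitz bound, which matches Lemma~\ref{lem:GkBounds1} — are accurate refinements, not deviations.
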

\smallskip

Next we shall discuss the convex splitting of functions $G_k(\cdot)$, for $k=1,2$.
We say a function is convex if its Hessian matrix is positive semi-definite everywhere,
and concave if its Hessian matrix is negative semi-definite everywhere. It is not hard to see that
\smallskip
\begin{lemma}  \label{lem:GkConcave}
For all $\chi_1 \ge 2\alpha_1\frac{q-p}{pq}$ and $\chi_2\ge 0$, the function $G_k(\vm{m}) -\frac{1}{2}\chi_k|\vm{m}|^2$, for $k=1,2$, is concave.
\end{lemma}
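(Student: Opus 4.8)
The plan is to show that the Hessian $\nabla_F^2\bigl(G_k(\vm{m}) - \tfrac12\chi_k|\vm{m}|^2\bigr) = \nabla_F^2 G_k(\vm{m}) - \chi_k I$ is negative semi-definite for every $\vm{m}\in\bbR^2$, using the explicit formulas for $\nabla_F^2 G_k$ from Lemma~\ref{lem:gradHessianGk}. The key structural observation is that each Hessian has the form $a(|\vm{m}|)\,I + b(|\vm{m}|)\,\frac{\vm{m}\otimes\vm{m}}{|\vm{m}|}$, and a matrix of this form (for $\vm{m}\neq\vm{0}$) has exactly two eigenvalues: $a$ (on the one-dimensional subspace orthogonal to $\vm{m}$) and $a + b|\vm{m}|$ (along $\vm{m}$). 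So it suffices to check that both $a(s)-\chi_k \le 0$ and $a(s) + b(s)s - \chi_k \le 0$ for all $s = |\vm{m}| \ge 0$, and separately to handle the point $\vm{m}=\vm{0}$ by a continuity/limiting argument (or by noting $\nabla_F^2 G_k(\vm{0})$ is a scalar multiple of $I$).

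First I would record the eigenvalue decomposition just described. Then, for $k=2$: here $a(s) = -\alpha_2\frac{p}{pq+(p+q)s}$ and $b(s)s = \alpha_2\frac{p(p+q)s}{(pq+(p+q)s)^2}$, so $a(s) < 0$ trivially gives $a(s) - \chi_2 \le 0$ for any $\chi_2 \ge 0$; and the second eigenvalue simplifies to $a(s)+b(s)s = -\alpha_2\frac{p\,pq}{(pq+(p+q)s)^2} = -\alpha_2\frac{p^2 q}{(pq+(p+q)s)^2} < 0$, again $\le \chi_2$. So $\chi_2 \ge 0$ suffices, and in fact $G_2$ itself is already concave. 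For $k=1$: $a(s) = -\alpha_1\frac{q-p}{(p+s)(q+s)}$, which is negative and, in absolute value, maximized at $s=0$ with value $\alpha_1\frac{q-p}{pq}$; thus $a(s) - \chi_1 \le a(0) - \chi_1 = -\alpha_1\frac{q-p}{pq} + \ldots$ — here I need $\chi_1 \ge \alpha_1\frac{q-p}{pq}$, which is implied by the stated hypothesis $\chi_1 \ge 2\alpha_1\frac{q-p}{pq}$. The second eigenvalue is $a(s) + b(s)s$ with $b(s)s = \alpha_1\bigl(\frac{1}{(p+s)^2}-\frac{1}{(q+s)^2}\bigr)s = \alpha_1\frac{(q-p)(p+q+2s)\,s}{(p+s)^2(q+s)^2}$; combining, $a(s)+b(s)s = \alpha_1(q-p)\frac{s(p+q+2s) - (p+s)(q+s)}{(p+s)^2(q+s)^2} = \alpha_1(q-p)\frac{s^2 - pq}{(p+s)^2(q+s)^2}$. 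This is negative for $s < \sqrt{pq}$ and positive for $s > \sqrt{pq}$; its supremum over $s\ge 0$ is the limit as $s\to\infty$, namely $0$, but it is attained as a genuine maximum at some finite $s$. A clean bound: since $(p+s)^2(q+s)^2 \ge (pq)^2 \cdot$ (something) is awkward, instead I would bound $\frac{s^2 - pq}{(p+s)^2(q+s)^2} \le \frac{s^2}{(p+s)^2(q+s)^2} \le \frac{s^2}{(p+s)^2 s^2}\cdot\frac{(p+s)^2}{(q+s)^2}$ — this is getting circular, so the honest route is: the function $g(s) = \frac{s^2-pq}{(p+s)^2(q+s)^2}$ satisfies $g(s) \le \frac{1}{(p+s)(q+s)} \cdot \frac{s^2-pq}{(p+s)(q+s)} \le \frac{1}{(p+s)(q+s)} \le \frac{1}{pq}$ whenever $s^2 - pq \le (p+s)(q+s)$, i.e. whenever $s^2 - pq \le pq + (p+q)s + s^2$, i.e. always. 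Hence $a(s)+b(s)s \le \alpha_1(q-p)/(pq) \le \chi_1$, using again $\chi_1 \ge 2\alpha_1\frac{q-p}{pq} \ge \alpha_1\frac{q-p}{pq}$.

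Finally I would dispatch the degenerate point $\vm{m}=\vm{0}$: from the formulas, $\nabla_F^2 G_k(\vm{0}) = -a_k(0) I$ with $a_k(0) < 0$ as above (the $\vm{m}\otimes\vm{m}/|\vm{m}|$ term vanishes in the limit since $|\vm{m}\otimes\vm{m}|/|\vm{m}| = |\vm{m}| \to 0$), so $\nabla_F^2 G_k(\vm{0}) - \chi_k I = (a_k(0) - \chi_k) I$ is negative semi-definite under the same conditions on $\chi_k$. I expect the only mildly delicate point to be the algebraic simplification of the two eigenvalues of $\nabla_F^2 G_1$ and confirming that the worst case for each is controlled by the single constant $2\alpha_1\frac{q-p}{pq}$ (the factor $2$ in the hypothesis is what gives comfortable slack); everything else is routine. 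It may be cleanest to present the argument by first stating the eigenvalue lemma for matrices $aI + b\,\vm{m}\otimes\vm{m}/|\vm{m}|$, then verifying the two scalar inequalities for each $k$.
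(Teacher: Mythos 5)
Your proposal is correct and follows essentially the same route as the paper: both diagonalize the Hessian $aI+b\,\vm{m}\otimes\vm{m}/|\vm{m}|$ into its two eigenvalues and check each against $\chi_k$, with $k=2$ handled identically (both eigenvalues negative). The only difference is cosmetic: for the second eigenvalue of $\nabla_F^2G_1$ the paper discards the negative diagonal term and bounds the rank-one part by $2\alpha_1\frac{q-p}{pq}$, whereas you combine the two terms into $\alpha_1(q-p)\frac{s^2-pq}{(p+s)^2(q+s)^2}\le\alpha_1\frac{q-p}{pq}$, which is in fact a factor-of-two sharper and shows $\chi_1\ge\alpha_1\frac{q-p}{pq}$ would already suffice.
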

\begin{proof}
Note that the two eigenvalues of matrix $\vm{m}\otimes \vm{m}$ are $0$ and $|\vm{m}|^2$.
By (\ref{eq:Hessians}), it is clear that the two eigenvalues of $\nabla_F^2 G_1(\vm{m})$ are
$$
\begin{aligned}
\lambda_1 &= -\alpha_1\frac{q-p}{(p+|\vm{m}|)(q+|\vm{m}|)} <0, \\
\lambda_2 &= -\alpha_1\frac{q-p}{(p+|\vm{m}|)(q+|\vm{m}|)} + \alpha_1\left(\frac{1}{(p+|\vm{m}|)^2} - \frac{1}{(q+|\vm{m}|)^2}\right) |\vm{m}| < \alpha_1\left(\frac{1}{(p+|\vm{m}|)^2} - \frac{1}{(q+|\vm{m}|)^2}\right) |\vm{m}| \\
          &= \alpha_1\frac{(q-p)(p+q+2|\vm{m}|)|\vm{m}|}{(p+|\vm{m}|)^2(q+|\vm{m}|)^2} = 2 \alpha_1\left(\frac{q-p}{(p+|\vm{m}|)(q+|\vm{m}|)}\right)\, \left(\frac{(p+q)/2 + |\vm{m}|}{q+|\vm{m}|}\right)\, \left(\frac{|\vm{m}|}{p+|\vm{m}|}\right) \\
          & < 2\alpha_1\frac{q-p}{pq} \le \chi_1.
\end{aligned}
$$
This, combined with the fact that $\nabla_F^2 (\frac{1}{2}\chi_1|\vm{m}|^2) = \chi_1 I$, implies that $G_1(\vm{m}) -\frac{1}{2}\chi_1|\vm{m}|^2$ is concave.
Similarly, the two eigenvalues of $\nabla_F^2 G_2(\vm{m})$ are
$$
\begin{aligned}
\lambda_1 &= -\alpha_2\frac{p}{pq+(p+q)|\vm{m}|} < 0, \\
\lambda_2 &= -\alpha_2\frac{p}{pq+(p+q)|\vm{m}|} + \alpha_2\frac{p(p+q)}{(pq+(p+q)|\vm{m}|)^2}|\vm{m}| = -\alpha_2\frac{p^2q}{(pq+(p+q)|\vm{m}|)^2} <0.
\end{aligned}
$$
Hence $G_2(\vm{m}) -\frac{1}{2}\chi_2|\vm{m}|^2$ is concave. This completes the proof of the lemma.
\end{proof}
\smallskip

\begin{cor} \label{cor:convexsplitting}
The functions $G_k(\cdot)$, for $k=1,2$, have the convex-concave splitting $G_k(\cdot) = G_{k,+}(\cdot) + G_{k,-}(\cdot)$,
where the convex and the concave parts are defined, respectively, by
$$
G_{k,+}(\vm{m}) = \frac{1}{2}\chi_k|\vm{m}|^2,\qquad G_{k,-}(\vm{m}) = G_k(\vm{m}) -\frac{1}{2}\chi_k|\vm{m}|^2,
$$
for all $\chi_1 \ge 2\alpha_1\frac{q-p}{pq}$ and $\chi_2\ge 0$.
\end{cor}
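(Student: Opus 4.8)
The plan is to observe that this statement is an immediate consequence of Lemma \ref{lem:GkConcave} together with the trivial convexity of a positive multiple of $|\vm{m}|^2$. There is essentially nothing to do beyond bookkeeping, so I would keep the argument short.

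First I would verify the identity $G_k(\vm{m}) = G_{k,+}(\vm{m}) + G_{k,-}(\vm{m})$, which holds by construction: subtracting and adding $\frac12\chi_k|\vm{m}|^2$ gives $G_k = \tfrac12\chi_k|\vm{m}|^2 + \bigl(G_k - \tfrac12\chi_k|\vm{m}|^2\bigr)$. Next I would check that $G_{k,+}$ is convex. Since $\nabla_F^2\bigl(\tfrac12\chi_k|\vm{m}|^2\bigr) = \chi_k I$ and, for the admissible ranges, $\chi_1 \ge 2\alpha_1\frac{q-p}{pq} > 0$ (recall $0<p<q$ and $\alpha_1>0$) and $\chi_2 \ge 0$, the Hessian $\chi_k I$ is positive semi-definite everywhere, so $G_{k,+}$ is convex in the sense defined just before Lemma \ref{lem:GkConcave}. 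Finally, the concavity of $G_{k,-}(\vm{m}) = G_k(\vm{m}) - \tfrac12\chi_k|\vm{m}|^2$ for $\chi_1 \ge 2\alpha_1\frac{q-p}{pq}$ and $\chi_2 \ge 0$ is exactly the content of Lemma \ref{lem:GkConcave}, so nothing further is needed.

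I do not anticipate any real obstacle here; the only point that deserves a word of care is confirming that the lower bounds imposed on $\chi_k$ are nonnegative, so that the ``convex part'' is genuinely convex and not merely affine or worse — but this is immediate from the sign conditions $\alpha_1>0$ and $0<p<q$. One could also remark that the splitting is highly non-unique (any larger $\chi_k$ works), which is precisely what makes it flexible for the convex-concave time-stepping scheme discussed later; the key structural gain, as the introduction emphasizes, is that $G_{k,+}$ is quadratic, hence its gradient $\chi_k\vm{m}$ is linear in $\vm{m}=\nabla h$, yielding a linear implicit part in the numerical discretization.
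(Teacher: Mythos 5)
Your proposal is correct and matches the paper's intent exactly: the corollary is stated there without proof precisely because, as you observe, the decomposition is an identity, the convexity of $G_{k,+}$ follows from $\nabla_F^2(\tfrac12\chi_k|\vm{m}|^2)=\chi_k I$ with $\chi_k\ge 0$, and the concavity of $G_{k,-}$ is the content of Lemma \ref{lem:GkConcave}. The only tiny imprecision is your aside about the convex part not being "merely affine": for $\chi_2=0$ it is identically zero, which is still convex under the paper's positive semi-definite Hessian convention, so nothing is lost.
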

\medskip

The convex splitting and its properties are essential in theoretical analysis and the constructing of numerical schemes.
In \cite{LL03, CW}, several bounds of the convex splitting for ES currents $\vm{J}_{F,ES}$ and $\vm{J}_{FSS,ES}$
have been proved. Next, we shall prove similar bounds for ES currents $\vm{J}_{k,ES}$, with $k=1,2$.
\smallskip
\begin{lemma} \label{lem:GkBounds1}
For any $\vm{m}\in \bbR^2$ and $k=1,2$, we have
$$
|G_k(\vm{m})|\le C(1+|\vm{m}|)\qquad\textrm{and}\qquad |\nabla_F G_k (\vm{m})| \le C,
$$
where $C$ is a general constant depending only on $\alpha_k$, $p$ and $q$.
Moreover, all eigenvalues of $\nabla_F^2 G_k(\vm{m})$ have absolute values bounded by $C$.
In other words, the matrix $2$-norm of $\nabla_F^2 G_k(\vm{m})$, denoted by $|\nabla_F^2 G_k(\vm{m})|$, has bound
$$
|\nabla_F^2 G_k(\vm{m})| \le C.
$$
\end{lemma}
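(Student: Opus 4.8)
The plan is to establish the three assertions one at a time, in each case reducing to the explicit formulas for $G_k$, $\nabla_F G_k$, and $\nabla_F^2 G_k$ already recorded in Lemmas~\ref{lem:gradHessianGk} and~\ref{lem:GkConcave}, so that only elementary one-variable estimates in $s = |\vm{m}|$ remain.

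First I would treat the gradient bound. By Lemma~\ref{lem:gradHessianGk}, $\nabla_F G_k(\vm{m}) = -\Phi_k(|\vm{m}|)\vm{m}$, so $|\nabla_F G_k(\vm{m})| = s\,\Phi_k(s)$ with $s = |\vm{m}|$. Here one should not merely invoke the boundedness of $\Phi_k$ from Lemma~\ref{lem:PhiBounds} (which only gives the useless $s\Phi_k(s) \le Cs$), but instead use the extra decay: for $k=1$, $s/(p+s) \le 1$ yields $s\,\Phi_1(s) = \alpha_1\frac{(q-p)s}{(p+s)(q+s)} \le \alpha_1\frac{q-p}{q}$; for $k=2$, $(p+q)s/p \le q + (p+q)s/p$ yields $s\,\Phi_2(s) = \alpha_2\frac{s}{q + (p+q)s/p} \le \alpha_2\frac{p}{p+q}$. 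Both bounds depend only on $\alpha_k$, $p$, $q$.

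Next, for the growth bound $|G_k(\vm{m})| \le C(1+|\vm{m}|)$, I would apply the triangle inequality to the explicit formulas and reduce the whole matter to the claim that, for each fixed $c>0$, the map $s \mapsto |\ln(c+s)|$ is dominated by $C_c(1+s)$ on $[0,\infty)$; this holds because $\ln(c+s)/(1+s)$ is continuous on $[0,\infty)$ (no singularity, since $c>0$) and tends to $0$ as $s\to\infty$, hence is bounded. Using this with $c \in \{p,q\}$ for $G_1$, and with $c = pq/(p+q)$ for $G_2$ while absorbing the linear term $-\tfrac{p}{p+q}|\vm{m}|$ directly, gives the asserted bound. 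For the Hessian bound, I would note that $\nabla_F^2 G_k(\vm{m})$ is symmetric, so its matrix $2$-norm equals $\max(|\lambda_1|,|\lambda_2|)$, where $\lambda_1, \lambda_2$ are exactly the eigenvalues computed in the proof of Lemma~\ref{lem:GkConcave}. That proof already shows $|\lambda_1| \le \alpha_1\frac{q-p}{pq}$ and $0 \le \lambda_2 - \lambda_1 < 2\alpha_1\frac{q-p}{pq}$ for $k=1$ (whence $|\lambda_2|$ is bounded by a constant multiple of $\alpha_1\frac{q-p}{pq}$), and that both eigenvalues are negative with $|\lambda_1|, |\lambda_2| \le \alpha_2/q$ for $k=2$, since the displayed denominators are bounded below by $pq$. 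This yields $|\nabla_F^2 G_k(\vm{m})| \le C$.

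I do not expect a genuine obstacle: the statement is essentially bookkeeping on top of the earlier lemmas. The only point requiring a moment's care is the $G_k$ growth estimate, where one must exploit the sublinear (logarithmic) growth of $\ln(c+s)$ rather than relying on the crude bound on $\Phi_k$, since the latter alone would give only quadratic growth of $G_k$.
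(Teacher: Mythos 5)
Your proposal is correct and follows essentially the same route as the paper: read off $|\nabla_F G_k| = s\,\Phi_k(s)$ from the explicit formula for $\Phi_k$, bound $|G_k|$ via the sublinear growth of the logarithm, and bound the Hessian norm by the eigenvalues already computed in the proof of Lemma~\ref{lem:GkConcave}. The only (immaterial) difference is that the paper justifies the logarithmic growth with the explicit inequality $\ln(1+s)\le s$, whereas you use continuity of $\ln(c+s)/(1+s)$ on $[0,\infty)$ plus its vanishing at infinity; your version is in fact more detailed and supplies the explicit constants the paper leaves implicit.
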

\begin{proof}
The proof for $|G_k(\vm{m})|\le C(1+|\vm{m}|)$ follows immediately from the fact that $\ln(1+s)\le s$ for $s\ge 0$,
while the proof of $|\nabla_F G_k (\vm{m})| \le C$ is elementary by Lemma \ref{lem:gradHessianGk} and the definition of $\Phi_k(\cdot)$.
Finally, the claim about eigenvalues of $\nabla_F^2 G_k(\vm{m})$ follows from the computation of these eigenvalues in the proof of Lemma \ref{lem:GkConcave}.
\end{proof}

\smallskip
\begin{cor} \label{cor:GkBounds}
By Lemma \ref{lem:GkBounds1} and Corollary \ref{cor:convexsplitting}, one immediately have
$$
\begin{aligned}
|G_{k,+}(\vm{m})| + |G_{k,-}(\vm{m})| & \le C(1+|\vm{m}|^2), \\
|\nabla_F G_{k,+}(\vm{m})| + |\nabla_F G_{k,-}(\vm{m})| & \le C(1+|\vm{m}|), \\
|\nabla_F^2 G_{k,+}(\vm{m})| + |\nabla_F^2 G_{k,-}(\vm{m})| & \le C,
\end{aligned}
$$
for  any $\vm{m}\in \bbR^2$ and $k=1,2$.
\end{cor}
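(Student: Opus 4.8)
The plan is to treat the convex and the concave parts separately, reducing every bound to Lemma \ref{lem:GkBounds1}. First I would fix a specific admissible value of the splitting parameter, say $\chi_1 = 2\alpha_1\frac{q-p}{pq}$ and $\chi_2 = 0$, so that $\chi_k$ is a constant depending only on $\alpha_k$, $p$, $q$; this guarantees the final constant $C$ has the advertised dependence. Any larger $\chi_k$ works equally, at the cost of enlarging $C$.

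For the convex part $G_{k,+}(\vm{m}) = \frac12\chi_k|\vm{m}|^2$ the three estimates are immediate from direct computation: $|G_{k,+}(\vm{m})| = \frac12\chi_k|\vm{m}|^2 \le \frac12\chi_k(1+|\vm{m}|^2)$; $\nabla_F G_{k,+}(\vm{m}) = \chi_k\vm{m}$, hence $|\nabla_F G_{k,+}(\vm{m})| = \chi_k|\vm{m}| \le \chi_k(1+|\vm{m}|)$; and $\nabla_F^2 G_{k,+}(\vm{m}) = \chi_k I$, whose matrix $2$-norm equals $\chi_k$.

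For the concave part $G_{k,-} = G_k - G_{k,+}$ I would simply invoke the triangle inequality (for scalars, for vectors, and for the subadditive matrix $2$-norm) together with Lemma \ref{lem:GkBounds1}: $|G_{k,-}(\vm{m})| \le |G_k(\vm{m})| + |G_{k,+}(\vm{m})| \le C(1+|\vm{m}|) + \frac12\chi_k|\vm{m}|^2 \le C(1+|\vm{m}|^2)$, where the last step uses $1+|\vm{m}| \le 2(1+|\vm{m}|^2)$; likewise $|\nabla_F G_{k,-}(\vm{m})| \le |\nabla_F G_k(\vm{m})| + \chi_k|\vm{m}| \le C + \chi_k|\vm{m}| \le C(1+|\vm{m}|)$; and $|\nabla_F^2 G_{k,-}(\vm{m})| \le |\nabla_F^2 G_k(\vm{m})| + \chi_k \le C$, using the eigenvalue bound from Lemma \ref{lem:GkBounds1}. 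Adding the convex and concave estimates in each line yields the three stated inequalities.

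There is essentially no obstacle here; the only point requiring a moment's care is the bookkeeping of the constant, namely checking that after fixing $\chi_k$ and absorbing the cross-terms $1+|\vm{m}|$ into $1+|\vm{m}|^2$ the resulting $C$ still depends only on $\alpha_k$, $p$, $q$. This holds because every quantity that enters — the constants supplied by Lemma \ref{lem:GkBounds1} and the chosen $\chi_k$ — has exactly that dependence.
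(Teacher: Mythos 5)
Your proof is correct and is exactly the argument the paper intends: the corollary is stated as an immediate consequence of Lemma \ref{lem:GkBounds1} and the quadratic form of the convex part, with the concave part handled by the triangle inequality. Your extra care in fixing $\chi_k$ so that $C$ retains its dependence only on $\alpha_k$, $p$, $q$ is a sound (and slightly more explicit) touch than the paper provides.
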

\smallskip

In addition, we also have the following lemma:

\smallskip
\begin{lemma} \label{lem:GkBounds2}
For any constant $\beta>0$, there exists a $C_{\beta}>0$ such that
$$
G_k(\vm{m}) \ge -\beta|\vm{m}|^2 - C_{\beta},\qquad\textrm{for all }\vm{m}\in \bbR^2\textrm{ and }k=1,2.
$$
\end{lemma}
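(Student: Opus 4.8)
The plan is to obtain the quadratic lower bound as an immediate consequence of the linear growth estimate already established in Lemma \ref{lem:GkBounds1}, using a weighted Young inequality to trade the linear term for an arbitrarily small multiple of $|\vm{m}|^2$ at the cost of an additive constant.

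First I would invoke Lemma \ref{lem:GkBounds1}, which asserts $|G_k(\vm{m})| \le C(1+|\vm{m}|)$ for a constant $C$ depending only on $\alpha_k$, $p$, and $q$, but not on $\vm{m}$. In particular this gives the one-sided bound $G_k(\vm{m}) \ge -C - C|\vm{m}|$ for all $\vm{m}\in\bbR^2$ and $k=1,2$. Next, for the prescribed $\beta>0$, I would apply Young's inequality in the form $C|\vm{m}| \le \beta|\vm{m}|^2 + \frac{C^2}{4\beta}$, equivalently $-C|\vm{m}| \ge -\beta|\vm{m}|^2 - \frac{C^2}{4\beta}$. Substituting this into the previous inequality yields $G_k(\vm{m}) \ge -\beta|\vm{m}|^2 - \bigl(C + \frac{C^2}{4\beta}\bigr)$, and one then simply sets $C_\beta := C + \frac{C^2}{4\beta}$.

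There is essentially no real obstacle in this argument; the only thing worth verifying is that the constant $C$ supplied by Lemma \ref{lem:GkBounds1} is genuinely uniform in $\vm{m}$ (which it is, being built from $\ln(1+s)\le s$ and the explicit expressions for $G_k$), so that the resulting $C_\beta$ depends only on $\beta$, $\alpha_k$, $p$ and $q$ as claimed. If one preferred to avoid citing Lemma \ref{lem:GkBounds1}, the same estimate can be read off directly from the defining formulas for $G_1$ and $G_2$ together with $\ln(c+s)\le \ln c + s/c$, but the route through the already-proven linear bound is the cleanest.
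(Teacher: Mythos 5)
Your argument is correct and is essentially identical to the paper's own proof: both start from the linear bound $|G_k(\vm{m})|\le C(1+|\vm{m}|)$ of Lemma \ref{lem:GkBounds1} and apply Young's inequality to absorb $C|\vm{m}|$ into $\beta|\vm{m}|^2 + \frac{C^2}{4\beta}$, yielding $C_\beta = C + \frac{C^2}{4\beta}$. No discrepancies to report.
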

\begin{proof}
By Lemma \ref{lem:GkBounds1} and the Young's inequality, one has
$$
G_k(\vm{m}) \ge -C(|\vm{m}|+1) \ge -C|\vm{m}|-C \ge -\beta|\vm{m}|^2 - \frac{C^2}{4\beta} - C,
$$
where $C$ is a positive constant. This completes the proof of the lemma.
\end{proof}
\smallskip

We have so far stated all properties of $\vm{J}_{k,ES}$ needed in the analysis of Equation (\ref{eq:diffeq}).
Note that these properties hold for both $k=1$ and $k=2$.
It turns out that $\vm{J}_{3,ES}$, although not continuous at $\nabla h = 0$, also satisfy some of these properties.
We summarize it in the following remark:

\begin{rmk} \label{rem:J3}
Similar analysis shows that the same properties as presented in this subsection hold for $\vm{J}_{3,ES}$ as long as $\nabla h$ stays away from $\vm{0}$.
Below are the details. Define
$$
\Phi_3(s) = \alpha_3 \frac{1}{s^2+(q-p)s}\quad\textrm{and}\quad \frac{1}{\alpha_3}G_3(\vm{m}) = -\ln (q-p+|\vm{m}|).
$$
Then, one has $\Phi_3 \in C^1(\bbR^+)$, $G_3\in C(\bbR^2)\cap C^2(\bbR^2\backslash\{\vm{0}\})$,
and for all $\vm{m}\in \bbR^2\backslash\{\vm{0}\}$,
$$
\begin{aligned}
\nabla_F G_3(\vm{m}) &= - \Phi_3(|\vm{m}|) \vm{m}, \\[2mm]
\nabla_F^2 G_3(\vm{m}) &= -\alpha_3\frac{1}{(q-p)|\vm{m}|+|\vm{m}|^2}I + \alpha_3 \frac{q-p+2|\vm{m}|}{\left((q-p)|\vm{m}|+|\vm{m}|^2\right)^2} \, \frac{\vm{m} \otimes \vm{m}}{|\vm{m}|}.
\end{aligned}
$$
Moreover, $G_3$ has the convex-concave splitting $G_3 = G_{3,+} + G_{3,-}$ where
$$
G_{3,+}(\vm{m}) = \frac{1}{2} \chi_3|\vm{m}|^2,\qquad G_{3,-}(\vm{m}) = G_3(\vm{m}) - \frac{1}{2} \chi_3|\vm{m}|^2,
$$
for all $\chi_3 \ge \frac{\alpha_3}{(q-p)^2}$.
When $s$ or $|\vm{m}|$ stays away from $0$, $\vm{J}_{3,ES}$ and $G_3$ have similar bounds as
in Lemmas \ref{lem:PhiBounds}, \ref{lem:GkBounds1}, \ref{lem:GkBounds1}, and Corollary \ref{cor:GkBounds},
but not when $s\to 0$ or $|\vm{m}|\to 0$.
\end{rmk}

\subsection{Weak solution to Equation (\ref{eq:diffeq})}
Due to the unboundedness of $\vm{J}_{3,ES}$ and $G_3$ mentioned in Remark \ref{rem:J3},
the analysis from here to the end of Section \ref{Sprop} only works for $\vm{J}_{k,ES}$, with $k=1,2$.
Using lemmas and corollaries proved in Section \ref{subsec:Smodel1}, we no longer need to distinguish
between $k=1$ and $k=2$ in the analysis to be given.
Therefore the subscript $k$ will be dropped for simplicity, i.e., without special mentioning, $\vm{J}_{ES}$, $\Phi(\cdot)$, $\chi$ and $G(\cdot)$ will
be used with definitions taken to be either for $k=1$ or $k=2$. Also, the convex splitting of $G(\cdot)$
defined in Corollary \ref{cor:convexsplitting} will simply be denoted by $G_+(\cdot)$ and $G_-(\cdot)$.
Occasionally, the case $k=3$ will be discussed individually in remarks.

In this subsection,
we define what is a weak solution to Equation (\ref{eq:diffeq}) and establish the existence, uniqueness as well as the regularity results of the weak solution.
The analysis follows exactly the same framework presented in \cite{LL03}, i.e., Lions method \cite{Lions} of
first constructing a semi-discrete Galerkin spectral approximation and then proving its convergence using a compactness argument,
as this is currently the most efficient approach for the given problem.
However, due to the different properties of $\vm{J}_{ES}$, there are still many essential differences
between our analysis and the one in \cite{LL03}, mainly in the proof of some inequalities.
Thus we still present the entire proof for completeness, although readers may find the majority of notation
and analysis are just borrowed from \cite{LL03}.

We first introduce the weak formulation of (\ref{eq:diffeq}).
Denote by $W^{m,r}_{per}(\Omega)$, for $m\ge 0$ and $1\le r\le \infty$ the $\Omega$-periodic Sobolev space with indices $m$ and $r$.
When $m=0$ and $r<\infty$, the space $W^{0,r}_{per}(\Omega)$ is simply the Lebesgue space $L^r(\Omega)$.
When $m\ge 1$ and $r=2$, the space $W^{m,2}_{per}(\Omega)$
is a Hilbert space and is also denoted by $H^m_{per}(\Omega)$.
For simplicity, denote by $\|\cdot\|$ the $L^2(\Omega)$ norm, while other Sobolev norms shall be explicitly specified in subscripts,
for example $\|\cdot\|_{H^1(\Omega)}$ and $\|\cdot\|_{L^2(0,T; H^2(\Omega))}$ --
note that in terms of norms there is no difference between $W^{m,r}(\Omega)$ and $W^{m,r}_{per}(\Omega)$ and hence the $per$ is omitted.
For $m<0$, denote by $H^m_{per}(\Omega)$ the dual space of $H^{-m}_{per}(\Omega)$.
Then the weak problem for Equation (\ref{eq:diffeq}) can be written as:
{\it Find $h$, in a proper space to be specified later, such that for all $t\in (0,T)$}
\begin{equation} \label{eq:weakeq}
\begin{aligned}
&\langle\partial_t h, \phi\rangle + a(h,\phi) = \langle\zeta,\phi\rangle,
\qquad\textrm{for all }\phi\in H^2_{per}(\Omega), \\
\textrm{with the form}\quad
a(h,\phi) &\triangleq \gamma \langle\nabla h,\nabla \phi\rangle + \epsilon^2 \langle\Delta h,\Delta \phi\rangle - \langle\Phi (|\nabla h|) \nabla h, \nabla\phi\rangle,
\end{aligned}
\end{equation}
where $\langle\cdot,\cdot\rangle$ denotes the duality pair, or the $L^2$ inner-product on $\Omega$ if both parties involved lie at least in $L^2(\Omega)$.
Lemma \ref{lem:PhiBounds} states that $0<\Phi (|\nabla h|) \le C$, thus
the nonlinear term $\langle\Phi (|\nabla h|) \nabla h, \nabla\phi\rangle$ in (\ref{eq:weakeq}) is well-defined
as long as $h$ and $\phi$ are in $H^1_{per}(\Omega)$.

\smallskip
\begin{definition}
We say $h:\:\Omega\times [0,T]\to \bbR$ is a weak solution to (\ref{eq:diffeq}) if it satisfies
\begin{enumerate}
\item $h\in L^2(0,T; H^2_{per}(\Omega))$ and $\partial_t h\in L^2(0,T; H^{-2}_{per}(\Omega))$;
\item Function $h$ satisfies the weak formulation (\ref{eq:weakeq}) almost everywhere for $t\in (0,T)$;
\item $h(\cdot,0) = h_0(\cdot)$ almost everywhere in $\Omega$.
\end{enumerate}
\end{definition}
\smallskip

\subsubsection{Semi-discrete Galerkin spectral approximation}
Here we define the semi-discrete Galerkin spectral approximation to (\ref{eq:weakeq}).
For any given $\vm{x}\in\Omega=(0,L_1)\times (0,L_2)$, denote $\tvx = 2\pi [x_1/L_1,\,x_2/L_2]^T \in (0,2\pi)\times (0,2\pi)$.
For a given positive integer $N$,
define the index space $\mathcal{I}_N = \{\vxi\in\mathbb{Z}^2\textrm{ with } 0\le \xi_1,\xi_2\le N\textrm{ and }\vxi\neq \vm{0}\}$
and a discrete space on $\Omega$ by
$$
H_N = span \{1, \,\cos \vxi\cdot\tvx,\, \sin \vxi\cdot\tvx, \textrm{ for all }\vxi\in\mathcal{I}_N \}.
$$
The space $H_N$ is $\Omega$-periodic.
Note that the spanning set of $H_N$ also forms an orthogonal basis for $H_N$ under the $L^2(\Omega)$ inner-product.
After proper ordering and normalizing, we get an orthonormal basis denoted by $\{\phi_i,\textrm{ for }i=1,\ldots,\Xi(N)\}$, where $\Xi(N) = \dim{H_N}$.
Denote by $P_N$ the $L^2$ projection onto $H_N$.
We will seek the $N$-th semi-discrete Galerkin spectral approximation to Equation (\ref{eq:weakeq}) in the space $H_N$ as following:
{\it Find $h_N = \sum_{i=1}^{\Xi(N)} h_{N,i}(t) \phi_{i}$ satisfying $h_N(\cdot,0) = P_N h_0(\cdot)$ in $\Omega$ and }
\begin{equation} \label{eq:Spectraleq}
\langle \partial_t h_N, \phi\rangle + a(h_N, \phi) = \langle\zeta,\phi \rangle,
\qquad\textrm{for all }\phi\in H_N,\, t\in (0,T].
\end{equation}
We point out that since $1\in H_N$, the operator $P_N$ maps mean value free functions to mean value free functions.
By setting $\phi=1$ in (\ref{eq:Spectraleq}), one has $\partial_t \bar{h}_N = 0$.
Combining the above, we know that $h_N$, if existing, is mean value free for all $t\in [0,T]$ as long as
$\zeta$ and $h_0$ are mean value free.

Before establishing the well-posedness of the Galerkin spectral approximation (\ref{eq:Spectraleq}), we first state two technique lemmas from \cite{LL03},
with a little extra obvious facts.
\smallskip
\begin{lemma} \label{lem:PerLaplacian}
For all $\phi\in H^2_{per}(\Omega)$, one has
$$
\|\nabla \phi\|^2 \le \|\phi\|\, \|\Delta\phi\|\qquad\textrm{and}\qquad \sum_{i,j = 1}^2 \|\partial_{x_ix_j} \phi\|^2 = \|\Delta\phi\|^2.
$$
Moreover, if $\phi$ is mean value free on $\Omega$, by the Poincar\'{e} inequality one has
$$
C\|\phi\|\le \|\nabla \phi\|\le \frac{1}{C}\|\Delta \phi\|,
$$
where $C$ is a positive general constant depending only on $\Omega$, and consequently $\|\phi\|_{H^2(\Omega)} \le C \|\Delta \phi\|$.
\end{lemma}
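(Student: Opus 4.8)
The plan is to work with the Fourier series representation, which is the natural tool here since $\Omega$ is rectangular and periodic and the Galerkin basis introduced above is exactly the trigonometric one. Writing $\phi = \sum_{\vm{k}} \hat{\phi}_{\vm{k}}\, e^{\ri \vm{k}\cdot\tvx}$ with wave vectors $\vm{k}$ ranging over the lattice dual to $\Omega$, one has $\widehat{\partial_{x_ix_j}\phi}(\vm{k}) = -k_ik_j\hat\phi_{\vm{k}}$ and $\widehat{\Delta\phi}(\vm{k}) = -|\vm{k}|^2\hat\phi_{\vm{k}}$, so Plancherel's identity gives $\|\phi\|^2 \propto \sum_{\vm{k}} |\hat\phi_{\vm{k}}|^2$, $\|\nabla\phi\|^2 \propto \sum_{\vm{k}} |\vm{k}|^2 |\hat\phi_{\vm{k}}|^2$, $\|\Delta\phi\|^2 \propto \sum_{\vm{k}} |\vm{k}|^4|\hat\phi_{\vm{k}}|^2$, and $\sum_{i,j}\|\partial_{x_ix_j}\phi\|^2 \propto \sum_{\vm{k}} \big(\sum_i k_i^2\big)^2 |\hat\phi_{\vm{k}}|^2$, all with the \emph{same} Plancherel constant. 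The second identity of the lemma is then immediate since $\big(\sum_i k_i^2\big)^2 = |\vm{k}|^4$, and the first inequality follows by applying Cauchy--Schwarz to $\sum_{\vm{k}} |\vm{k}|^2|\hat\phi_{\vm{k}}|^2 = \sum_{\vm{k}} |\hat\phi_{\vm{k}}|\cdot |\vm{k}|^2|\hat\phi_{\vm{k}}|$. For readers who prefer to avoid Fourier series, the same facts follow by first assuming $\phi\in C^\infty_{per}(\Omega)$, integrating by parts twice using periodicity to get $\sum_{i,j}\int_\Omega |\partial_{x_ix_j}\phi|^2 = \sum_{i,j}\int_\Omega \partial_{x_ix_i}\phi\,\partial_{x_jx_j}\phi = \|\Delta\phi\|^2$ and $\|\nabla\phi\|^2 = -\int_\Omega \phi\,\Delta\phi \le \|\phi\|\,\|\Delta\phi\|$, and then passing to the limit via density of $C^\infty_{per}(\Omega)$ in $H^2_{per}(\Omega)$.

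For the mean-value-free part, I would observe that $\bar\phi = 0$ forces $\hat\phi_{\vm{0}} = 0$, so every sum above may be restricted to $\vm{k}\neq\vm{0}$; letting $\mu>0$ be the smallest value of $|\vm{k}|^2$ over this punctured lattice (a quantity depending only on $\Omega$), we get $\|\nabla\phi\|^2 \ge \mu\|\phi\|^2$, i.e., the Poincaré inequality $C\|\phi\|\le\|\nabla\phi\|$ with $C=\sqrt\mu$. The remaining bound $\|\nabla\phi\|\le\frac1C\|\Delta\phi\|$ then follows by combining the already-established $\|\nabla\phi\|^2 \le \|\phi\|\,\|\Delta\phi\|$ with $\|\phi\|\le\frac1C\|\nabla\phi\|$ and cancelling one factor of $\|\nabla\phi\|$ (the degenerate case $\nabla\phi=0$ being trivial, since a mean-value-free constant vanishes). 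Finally, $\|\phi\|_{H^2(\Omega)}^2 = \|\phi\|^2 + \|\nabla\phi\|^2 + \sum_{i,j}\|\partial_{x_ix_j}\phi\|^2 = \|\phi\|^2 + \|\nabla\phi\|^2 + \|\Delta\phi\|^2$ by the second identity, and bounding each term by a constant multiple of $\|\Delta\phi\|^2$ through the chain $C^2\|\phi\|\le C\|\nabla\phi\|\le\|\Delta\phi\|$ yields $\|\phi\|_{H^2(\Omega)}\le C\|\Delta\phi\|$ after renaming the constant.

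There is essentially no substantive obstacle here: once the Fourier framework (or the integration-by-parts-plus-density framework) is in place, each assertion is a one-line computation. The only point requiring a little care is the passage from smooth functions to general $H^2_{per}(\Omega)$ functions in the integration-by-parts route, which is precisely why I would favor the Fourier-series argument — it is valid verbatim for any $\phi$ with $\sum_{\vm{k}}|\vm{k}|^4|\hat\phi_{\vm{k}}|^2 < \infty$, which is exactly the defining condition for membership in $H^2_{per}(\Omega)$.
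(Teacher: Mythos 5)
Your proof is correct, and the paper itself offers no proof to compare against: Lemma \ref{lem:PerLaplacian} is stated as a quoted technical lemma from \cite{LL03}, and your Fourier/Plancherel argument (equivalently, the integration-by-parts-plus-density route you sketch) is exactly the standard justification behind it. The only blemish is notational: if the wave vectors $\vm{k}$ range over the lattice dual to $\Omega=(0,L_1)\times(0,L_2)$, the expansion should read $e^{\ri\vm{k}\cdot\vm{x}}$ rather than $e^{\ri\vm{k}\cdot\tvx}$ (or else take $\vm{k}=\vxi\in\mathbb{Z}^2$ with $\tvx$ and adjust the symbols $|\vm{k}|^2$ accordingly); this does not affect any of the estimates.
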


\begin{lemma} \label{lem:PNapproximation}
For any integer $m\ge 0$ and $\phi\in H^m_{per}(\Omega)$, one has
$$
\|P_N\phi\|_{H^m(\Omega)} \le \|\phi\|_{H^m(\Omega)},\qquad\textrm{and}\qquad P_N\phi \stackrel{N\to\infty}{\xrightarrow{\hspace{0.8cm}}} \phi \textrm{ strongly in }H^m_{per}(\Omega).
$$
Moreover, a direct calculation using Fourier series shows that
$$
\|\phi - P_N\phi\|_{H^j(\Omega)} \le C N^{-(m-j)} \|\phi\|_{H^m(\Omega)},\qquad \textrm{for }0\le j \le m,
$$
where $C$ is a positive general constant.
\end{lemma}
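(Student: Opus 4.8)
The plan is to reduce all three claims to elementary estimates on Fourier coefficients. Write $\phi=\sum_{i\ge 1}c_i\phi_i$ with $c_i=\langle\phi,\phi_i\rangle$, the expansion of $\phi$ in the $L^2(\Omega)$-orthonormal trigonometric system $\{\phi_i\}$, ordered so that for every $N$ its first $\Xi(N)$ members span $H_N$. The one structural fact I would record first is that each $\phi_i$ is, after the rescaling $\tvx$, an eigenfunction of $\Delta$ on $\Omega$, namely $-\Delta\phi_i=\mu_i\phi_i$ with $\mu_i=(2\pi\xi^{(i)}_1/L_1)^2+(2\pi\xi^{(i)}_2/L_2)^2$ for the associated frequency $\vxi^{(i)}\in\mathbb{Z}^2$. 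Hence every spatial derivative $D^\alpha$ sends $\phi_i$ to a scalar multiple of a function of the same frequency, so the system $\{\phi_i\}$ is orthogonal in each inner product $\langle u,v\rangle_{H^s(\Omega)}=\sum_{|\alpha|\le s}\langle D^\alpha u,D^\alpha v\rangle$, and there are constants $0<c\le C$ depending only on $s$ and $\Omega$ with $c\,(1+\mu_i)^s\le\|\phi_i\|_{H^s(\Omega)}^2\le C\,(1+\mu_i)^s$; moreover $\mu_i\ge c'|\vxi^{(i)}|^2$ with $c'$ depending only on $\Omega$. Since $\{\phi_i\}$ is a complete orthogonal system in $L^2_{per}(\Omega)$, the eigenfunction structure makes it a complete orthogonal system in $H^s_{per}(\Omega)$ as well, giving the Parseval-type identity $\|u\|_{H^s(\Omega)}^2=\sum_i\langle u,\phi_i\rangle^2\,\|\phi_i\|_{H^s(\Omega)}^2$ for all $u\in H^s_{per}(\Omega)$.

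Granting this, the first two assertions are immediate. Since $P_N\phi=\sum_{i\le\Xi(N)}c_i\phi_i$, the identity above with $s=m$ gives $\|P_N\phi\|_{H^m(\Omega)}^2=\sum_{i\le\Xi(N)}c_i^2\|\phi_i\|_{H^m(\Omega)}^2\le\sum_i c_i^2\|\phi_i\|_{H^m(\Omega)}^2=\|\phi\|_{H^m(\Omega)}^2$, which is the first bound (with constant $1$); equivalently, $P_N$ is the orthogonal projection onto $H_N$ both in $L^2$ and in $H^m$, so it is a contraction there. Likewise $\|\phi-P_N\phi\|_{H^m(\Omega)}^2=\sum_{i>\Xi(N)}c_i^2\|\phi_i\|_{H^m(\Omega)}^2$ is the tail of the convergent series $\sum_i c_i^2\|\phi_i\|_{H^m(\Omega)}^2=\|\phi\|_{H^m(\Omega)}^2$, hence tends to $0$ as $N\to\infty$; this is the claimed strong convergence $P_N\phi\to\phi$ in $H^m_{per}(\Omega)$.

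For the rate, the key observation is that $i>\Xi(N)$ means precisely $\vxi^{(i)}\notin\mathcal{I}_N$, i.e. $\max(\xi^{(i)}_1,\xi^{(i)}_2)>N$, so $|\vxi^{(i)}|^2>N^2$ and thus $1+\mu_i\ge c'N^2$. Then for $0\le j\le m$, using the norm equivalences above and $(1+\mu_i)^{j-m}\le (c'N^2)^{-(m-j)}$ (valid since $j-m\le 0$),
\[
\|\phi-P_N\phi\|_{H^j(\Omega)}^2=\sum_{i>\Xi(N)}c_i^2\|\phi_i\|_{H^j(\Omega)}^2\le C\sum_{i>\Xi(N)}(1+\mu_i)^{j-m}(1+\mu_i)^{m}c_i^2\le C(c')^{-(m-j)}N^{-2(m-j)}\sum_{i>\Xi(N)}(1+\mu_i)^{m}c_i^2,
\]
and the last sum is at most $c^{-1}\|\phi\|_{H^m(\Omega)}^2$. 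Taking square roots yields $\|\phi-P_N\phi\|_{H^j(\Omega)}\le C N^{-(m-j)}\|\phi\|_{H^m(\Omega)}$ with a constant $C$ depending only on $m$, $j$ and $\Omega$ (renamed to a general constant in the statement's convention).

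I do not expect a genuine obstacle: this is the textbook spectral truncation estimate, and once the eigenfunction/orthogonality structure is in place the rest is bookkeeping. The only mild care needed is (i) tracking the rescaling $\tvx$, which enters solely through the bounded factors $2\pi/L_i$ in $\mu_i$ and hence only affects the $\Omega$-dependent constants, and (ii) observing that ``$\phi_i\notin H_N$'' really does force $\max(\xi^{(i)}_1,\xi^{(i)}_2)>N$, which is exactly what trades one unit of the $H^m$--$H^j$ smoothness gap for one power of $N^{-1}$. The clean constant $1$ in the first inequality is nothing but the contraction property of the ($L^2$- and simultaneously $H^m$-) orthogonal projection $P_N$.
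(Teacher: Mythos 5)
Your argument is correct and is exactly the ``direct calculation using Fourier series'' that the paper alludes to: the paper itself gives no proof, merely citing \cite{LL03}, and the intended argument is precisely your observation that the trigonometric basis diagonalizes $-\Delta$, hence is orthogonal in every $H^s$ inner product with $\|\phi_i\|_{H^s}^2\sim(1+\mu_i)^s$, so that $P_N$ is simultaneously the $H^m$-orthogonal projection (giving the contraction and the strong convergence) and the tail frequencies satisfy $\mu_i\gtrsim N^2$ (giving the rate). No gaps.
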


Next we prove the existence, uniqueness, and regularity of the Galerkin spectral approximation.
\smallskip
\begin{lemma} \label{lem:GalerkinSolution}
Assume that $h_0\in L^2(\Omega)$ and $\zeta \in L^{2}(0,T; H^{-2}_{per}(\Omega))$,
then for each integer $N\ge 1$, there exists a unique semi-discrete Galerkin spectral approximation $h_N$ satisfying (\ref{eq:Spectraleq}).
The solution $h_N$ has bound
\begin{equation} \label{eq:hNbound1}
\|h_N\|_{L^{\infty}(0,T; L^2(\Omega))} + \|h_N\|_{L^2(0,T; H^2(\Omega))} \le C(\|h_0\|, \|\zeta\|_{L^2(0,T; H^{-2}_{per}(\Omega))}),
\end{equation}
where $C(\|h_0\|, \|\zeta\|_{L^2(0,T; H^{-2}_{per}(\Omega))})$ is a positive constant depending on $\|h_0\|$ and $\|\zeta\|_{L^2(0,T; H^{-2}_{per}(\Omega))}$.
Moreover, if $h_0\in H^2_{per}(\Omega)$ and $\zeta \in L^{2}(0,T; L^2(\Omega))$, then we also have the following bound
\begin{equation} \label{eq:hNbound2}
\|h_N\|_{L^{\infty}(0,T; H^2(\Omega))} + \|h_N\|_{L^{2}(0,T; H^4(\Omega))} + \|\partial_t h_N\|_{L^{2}(0,T; L^2(\Omega))}  \le C(\|h_0\|_{H^2(\Omega)}, \|\zeta\|_{L^{2}(0,T; L^2(\Omega)}),
\end{equation}
where $C(\|h_0\|_{H^2(\Omega)}, \|\zeta\|_{L^{2}(0,T; L^2(\Omega)})$ is a positive constant depending on
$\|h_0\|_{H^2(\Omega)}$ and $\|\zeta\|_{L^{2}(0,T; L^2(\Omega)}$.
\end{lemma}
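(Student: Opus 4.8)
The plan is to follow the standard Galerkin/ODE route of Lions: reduce (\ref{eq:Spectraleq}) to a finite system of ODEs, obtain local existence and uniqueness from Picard--Lindel\"of, and then upgrade this to global existence together with the two stated bounds by proving the matching \emph{a priori} energy estimates, which hold on any interval on which $h_N$ exists.

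\textbf{Step 1 (reduction to an ODE system).} Writing $h_N=\sum_{i=1}^{\Xi(N)}h_{N,i}(t)\phi_i$ and testing (\ref{eq:Spectraleq}) with $\phi=\phi_j$, orthonormality of $\{\phi_i\}$ turns the identity into $h_{N,j}'(t)=-\sum_iA_{ji}h_{N,i}(t)+F_j(\vm{h}_N(t))+\langle\zeta(t),\phi_j\rangle$, where $A_{ji}=\gamma\langle\nabla\phi_i,\nabla\phi_j\rangle+\epsilon^2\langle\Delta\phi_i,\Delta\phi_j\rangle$ is a constant matrix and $F_j(\vm{h}_N)=\langle\Phi(|\nabla h_N|)\nabla h_N,\nabla\phi_j\rangle$. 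By Corollary \ref{cor:Lipschitz} the field $\vm{m}\mapsto\Phi(|\vm{m}|)\vm{m}$ is $C^1$ and hence locally Lipschitz, so $\vm{h}_N\mapsto\vm{F}(\vm{h}_N)$ is locally Lipschitz; since $t\mapsto\langle\zeta(t),\phi_j\rangle\in L^2(0,T)$, the right-hand side is Carath\'eodory and locally Lipschitz in $\vm{h}_N$. Picard--Lindel\"of then gives a unique absolutely continuous solution on a maximal interval $[0,T_N)\subseteq[0,T]$ with $\vm{h}_N(0)$ the coefficient vector of $P_Nh_0$. This already settles uniqueness; it remains to prove $T_N=T$ and the quantitative bounds.

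\textbf{Step 2 (bound (\ref{eq:hNbound1}) and global existence).} Test (\ref{eq:Spectraleq}) with $\phi=h_N$. Using $a(h_N,h_N)=\gamma\|\nabla h_N\|^2+\epsilon^2\|\Delta h_N\|^2-\int_\Omega\Phi(|\nabla h_N|)|\nabla h_N|^2\,dx$ and $0<\Phi\le C$ from Lemma \ref{lem:PhiBounds},
$$\half\frac{d}{dt}\|h_N\|^2+\gamma\|\nabla h_N\|^2+\epsilon^2\|\Delta h_N\|^2\le C\|\nabla h_N\|^2+\langle\zeta,h_N\rangle.$$
Since $h_N$ is mean value free, Lemma \ref{lem:PerLaplacian} and Young's inequality give $C\|\nabla h_N\|^2\le C\|h_N\|\|\Delta h_N\|\le\frac{\epsilon^2}{4}\|\Delta h_N\|^2+C\|h_N\|^2$ and $\langle\zeta,h_N\rangle\le\|\zeta\|_{H^{-2}}\|h_N\|_{H^2}\le\frac{\epsilon^2}{4}\|\Delta h_N\|^2+C\|\zeta\|_{H^{-2}}^2$, hence $\frac{d}{dt}\|h_N\|^2+\epsilon^2\|\Delta h_N\|^2\le C\|h_N\|^2+C\|\zeta\|_{H^{-2}}^2$. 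Gronwall over $[0,t]$, together with $\|h_N(0)\|=\|P_Nh_0\|\le\|h_0\|$ (Lemma \ref{lem:PNapproximation}), bounds $\|h_N(t)\|$ uniformly on $[0,T_N)$; since the Euclidean norm of $\vm{h}_N(t)$ equals $\|h_N(t)\|$, no finite-time blow-up occurs and therefore $T_N=T$. Integrating the differential inequality in $t$ and applying Lemma \ref{lem:PerLaplacian} once more gives (\ref{eq:hNbound1}).

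\textbf{Step 3 (bound (\ref{eq:hNbound2})).} Assume now $h_0\in H^2_{per}(\Omega)$ and $\zeta\in L^2(0,T;L^2(\Omega))$, and test (\ref{eq:Spectraleq}) with $\phi=\partial_t h_N\in H_N$. The linear terms produce $\frac{\gamma}{2}\frac{d}{dt}\|\nabla h_N\|^2+\frac{\epsilon^2}{2}\frac{d}{dt}\|\Delta h_N\|^2$, and by Lemma \ref{lem:gradHessianGk} we have $\Phi(|\nabla h_N|)\nabla h_N=-\nabla_FG(\nabla h_N)$, so the nonlinear term is a total time derivative, $-\langle\Phi(|\nabla h_N|)\nabla h_N,\nabla\partial_t h_N\rangle=\langle\nabla_FG(\nabla h_N),\nabla\partial_t h_N\rangle=\frac{d}{dt}\int_\Omega G(\nabla h_N)\,dx$. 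Bounding $\langle\zeta,\partial_t h_N\rangle\le\half\|\partial_t h_N\|^2+\half\|\zeta\|^2$ and integrating in $t$, one gets
$$\half\int_0^t\|\partial_t h_N\|^2\,ds+\frac{\epsilon^2}{2}\|\Delta h_N(t)\|^2+\int_\Omega G(\nabla h_N(t))\,dx\le C\big(\|h_0\|_{H^2}^2\big)+\int_\Omega G(\nabla h_N(0))\,dx+\half\|\zeta\|_{L^2(0,T;L^2)}^2,$$
where $\tfrac{\gamma}{2}\|\nabla h_N(0)\|^2+\tfrac{\epsilon^2}{2}\|\Delta h_N(0)\|^2$ has been absorbed into $C(\|h_0\|_{H^2}^2)$ via Lemma \ref{lem:PNapproximation}. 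On the right, $\int_\Omega G(\nabla h_N(0))\,dx\le C(1+\|h_0\|_{H^2}^2)$ by Corollary \ref{cor:GkBounds}; on the left, Lemma \ref{lem:GkBounds2} with a small $\beta$ and the Poincar\'e inequality absorb $\int_\Omega G(\nabla h_N(t))\,dx\ge-\beta\|\nabla h_N(t)\|^2-C_\beta|\Omega|$ into $\tfrac{\epsilon^2}{2}\|\Delta h_N(t)\|^2$. This yields $\|\partial_t h_N\|_{L^2(0,T;L^2)}+\|h_N\|_{L^\infty(0,T;H^2)}\le C(\|h_0\|_{H^2},\|\zeta\|_{L^2(0,T;L^2)})$ after one more use of Lemma \ref{lem:PerLaplacian}. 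Finally, since $H_N$ is invariant under $\Delta$, the identity (\ref{eq:Spectraleq}) can be read as $\epsilon^2\Delta^2 h_N=P_N\big(\gamma\Delta h_N-\partial_t h_N-\nabla\cdot\vm{J}_{ES}(\nabla h_N)+\zeta\big)$; writing $\nabla\cdot\vm{J}_{ES}(\nabla h_N)=-\sum_{i,j}\big(\nabla_F^2G(\nabla h_N)\big)_{ij}\,\partial_{x_ix_j}h_N$ and using $|\nabla_F^2G|\le C$ (Lemma \ref{lem:GkBounds1}) with Lemma \ref{lem:PerLaplacian} gives $\|\nabla\cdot\vm{J}_{ES}(\nabla h_N)\|\le C\|\Delta h_N\|$, hence $\epsilon^2\|\Delta^2 h_N\|\le C\|\Delta h_N\|+\|\partial_t h_N\|+\|\zeta\|$. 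Squaring, integrating over $(0,T)$, inserting the bounds just obtained, and using the periodic Fourier estimate $\|h_N\|_{H^4}\le C\|\Delta^2 h_N\|$ (the analogue of the last assertion of Lemma \ref{lem:PerLaplacian}) completes the proof of (\ref{eq:hNbound2}).

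\textbf{Main obstacle.} Steps 1 and 2 are routine. The delicate point is Step 3: one must test with $\partial_t h_N$, recognize the nonlinearity as the exact time derivative $\frac{d}{dt}\int_\Omega G(\nabla h_N)$ using $\nabla_FG=-\Phi(|\cdot|)(\cdot)$, and then keep this potential under control at both ends of the time interval --- from below by the coercivity-type bound of Lemma \ref{lem:GkBounds2} and from above at $t=0$ by the quadratic growth bound of Corollary \ref{cor:GkBounds}; the $L^2(0,T;H^4)$ piece then requires recovering $\Delta^2 h_N$ from the equation with the help of the Hessian bound on $\nabla_F^2 G$. Everything else is the standard compactness machinery of \cite{Lions,LL03}. (Throughout we use $\epsilon>0$, which is the standing assumption here since $\vm{J}_{RELAX}$ is included precisely to guarantee $\epsilon^2=\kappa+K_{EQ}>0$.)
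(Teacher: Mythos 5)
Your proposal is correct, and Steps 1--2 together with the $L^\infty(0,T;H^2)$ and $\|\partial_t h_N\|_{L^2(0,T;L^2)}$ parts of Step 3 follow essentially the same route as the paper: ODE reduction plus Picard--Lindel\"of via Corollary \ref{cor:Lipschitz}, testing with $h_N$ and Gr\"onwall for (\ref{eq:hNbound1}) and global existence, then testing with $\partial_t h_N$ and recognizing the nonlinearity as $\frac{d}{dt}\int_\Omega G(\nabla h_N)\,dx$, with the potential controlled at both endpoints (the paper uses the linear growth bound $|G(\vm{m})|\le C(1+|\vm{m}|)$ of Lemma \ref{lem:GkBounds1} at both ends, while you use the quadratic bound at $t=0$ and Lemma \ref{lem:GkBounds2} at time $t$; both work).

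Where you genuinely diverge is the $L^2(0,T;H^4)$ estimate. The paper proceeds in two stages: it first tests with $\phi=-\partial^2 h_N$ to obtain an intermediate $L^2(0,T;H^3)$ bound, and only then tests with $\phi=\Delta^2 h_N$, expanding $\nabla\cdot(\Phi(|\nabla h_N|)\nabla h_N)$ and estimating the resulting term $|\nabla h_N|\,|\nabla^2 h_N|$ by H\"older together with Sobolev embeddings into $W^{1,4}$ and $W^{2,4}$, which is why the $H^3$ bound is needed. You instead read the Galerkin identity as $\epsilon^2\Delta^2 h_N=P_N(\zeta-\partial_t h_N+\gamma\Delta h_N-\nabla\cdot\vm{J}_{ES}(\nabla h_N))$ and invoke the uniform Hessian bound $|\nabla_F^2 G(\vm{m})|\le C$ of Lemma \ref{lem:GkBounds1} to get the pointwise control $|\nabla\cdot\vm{J}_{ES}(\nabla h_N)|\le C|\nabla^2 h_N|$, hence $\|\nabla\cdot\vm{J}_{ES}(\nabla h_N)\|\le C\|\Delta h_N\|$ by Lemma \ref{lem:PerLaplacian}. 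This is sharper than the paper's estimate (which only uses $|\Phi'|\le C$ rather than the full boundedness of $\nabla_F^2 G$, and so is left with the quadratic term $|\nabla h_N|\,|\nabla^2 h_N|$), and it lets you skip the $H^3$ detour and the Sobolev embedding argument entirely. Your shortcut is valid and buys a cleaner, shorter proof; its only cost is that it leans on the specific structural feature that $\nabla_F^2 G$ is globally bounded, whereas the paper's two-stage argument is the one that generalizes to ES currents (such as $\vm{J}_{FSS,ES}$) whose Hessians grow with $|\vm{m}|$.
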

\begin{proof}
We follow the proof of Theorem 4.1 in \cite{LL03}, with some modifications on terms involving the ES current $\vm{J}_{ES}$.
By setting $\phi = \phi_j$ for $j=1,\ldots,\Xi(N)$ in Equation (\ref{eq:Spectraleq}) and
using the orthogonality of basis functions, we get a system of ordinary differential equations
\begin{equation} \label{eq:spectralODE}
\partial_t h_{N,j}(t) = f_j(\zeta(t), h_{N,1}(t),\ldots, h_{N,\Xi(N)}(t)),\quad\textrm{for all }j=1,\ldots,\Xi(N).
\end{equation}
Condition $h_N(\cdot,0) = P_N h_0(\cdot)$ actually sets the initial condition $h_{N,j}(0) = \langle h_0,\phi_j\rangle$, $j=1,\ldots,\Xi(N)$ for System (\ref{eq:spectralODE}).
A standard procedure to prove global existence and uniqueness of the solution to (\ref{eq:spectralODE}) is to first get local existence and uniqueness by the Picard-Lindel\"{o}f theorem,
i.e., by showing that $f_j$ are locally Lipschitz, and then prove that the solution is bounded for $t$ up to any given $T_N\le T$. We first argue that all $f_j$ are locally Lipschitz.
This indeed follows immediately from Corollary \ref{cor:Lipschitz} and the fact that composition, summation, and product of locally Lipschitz functions are also locally Lipschitz.

Now by the Picard-Lindel\"{o}f theorem, System (\ref{eq:spectralODE}) admits a unique local solution for $t$ from $0$ up to a $T_N$.
By setting $\phi=h_N(t)$ in Equation (\ref{eq:Spectraleq}) and using lemmas \ref{lem:PhiBounds}, \ref{lem:PerLaplacian} and the Young's inequality, one has
$$
\begin{aligned}
\frac{1}{2} \frac{d}{dt} \|h_N\|^2 + \gamma \|\nabla h_N\|^2 +\epsilon^2 \|\Delta h_N\|^2 &= \int_\Omega \Phi(|\nabla h_N|) |\nabla h_N|^2 \, dx + \langle \zeta, h_N\rangle \\
&\le C\|h_N\|^2+ \frac{\epsilon^2}{4} \|\Delta h_N\|^2 + C\|\zeta\|_{H^{-2}_{per}(\Omega)}^2 + \frac{\epsilon^2}{4} \|\Delta h_N\|^2,
\end{aligned}
$$
where $C$ is a positive general constant.
Combining the $\|\Delta h_N\|^2$ terms, multiplying the inequality by $2e^{-2Ct}$ and integrating against $t$, then using the fact that $e^{-CT}\le e^{-Ct}\le 1$ for $t\in[0,T]$,
we have for all $\tau\in [0,T_N]$
\begin{equation} \label{eq:hNglobal1}
\begin{aligned}
\|h_N(\cdot, \tau)\|^2 + &2\gamma  \|\nabla h_N\|^2_{L^2(0,\tau; L^2(\Omega))} + \epsilon^2 \|\Delta h_N\|^2_{L^2(0,\tau; L^2(\Omega))} \\
&\le \|h_N(\cdot,0)\|^2 + Ce^{2CT} \|\zeta\|^2_{L^2(0,\tau; H^{-2}_{per}(\Omega))} \le \|h_0\|^2 +  Ce^{2CT}\|\zeta\|^2_{L^2(0,\tau; H^{-2}_{per}(\Omega))},
\end{aligned}
\end{equation}
where the last step follows from Lemma \ref{lem:PNapproximation}. Thus one has
$$
\sum_{i=1}^{\Xi(N)} h_{N,i}^2 (\tau) = \|h_N(\cdot, \tau)\|^2 \le \|h_0\|^2+Ce^{2CT} \|\zeta\|^2_{L^2(0,\tau; H^{-2}_{per}(\Omega))} \quad\textrm{for all }\tau \in [0,T_N],
$$
i.e., the solution to System (\ref{eq:spectralODE}) is bounded at $T_N\le T$ as long as $h_0\in L^2(\Omega)$ and $\zeta\in L^2(0,T; H^{-2}_{per}(\Omega))$,
Hence a unique extension of the local solution to $[0,T]$, i.e. the global solution, exists.
Moreover, Inequality (\ref{eq:hNbound1}) follows immediately from (\ref{eq:hNglobal1}) and Lemma \ref{lem:PerLaplacian}.

To prove Inequality (\ref{eq:hNbound2}), we set $\phi=\partial_t h_N(t)$ in (\ref{eq:Spectraleq}) and use Lemma \ref{lem:gradHessianGk} to get
$$
\|\partial_t h_N\|^2 + \frac{d}{dt}\left[\frac{\gamma}{2}\|\nabla h_N\|^2 + \frac{\epsilon^2}{2}\|\Delta h_N\|^2 + \int_\Omega G(\nabla h_N)\, dx \right]
= \langle \zeta, \partial_t h_N\rangle \le \frac{1}{2}\|\zeta\|^2 + \frac{1}{2} \|\partial_t h_N\|^2.
$$
Combining the $\|\partial_t h_N\|^2$ terms and then integrating against $t$ give
\begin{equation} \label{eq:hNglobal2}
\begin{aligned}
\frac{1}{2}\|\partial_t h_N\|^2_{L^2(0,\tau; L^2(\Omega))} + &\frac{\gamma}{2}\|\nabla h_N(\cdot,\tau)\|^2 + \frac{\epsilon^2}{2}\|\Delta h_N(\cdot,\tau)\|^2
\le \frac{\gamma}{2}\|\nabla h_N(\cdot,0)\|^2 + \frac{\epsilon^2}{2}\|\Delta h_N(\cdot,0)\|^2 \\
 & + \left| \int_\Omega G(\nabla h_N(\cdot,\tau))\, dx \right| + \left| \int_\Omega G(\nabla h_N(\cdot,0))\, dx \right|
 + \frac{1}{2} \|\zeta\|^2_{L^2(0,\tau; L^2(\Omega))},
\end{aligned}
\end{equation}
for all $\tau\in [0,T]$.
Using Lemma \ref{lem:GkBounds1} and \ref{lem:PerLaplacian}, we have for all $\tau\in [0,T]$
\begin{equation} \label{eq:hNglobal3}
\begin{aligned}
\left| \int_\Omega G(\nabla h_N(\cdot,\tau))\, dx \right| &\le \int_\Omega |G(\nabla h_N(\cdot,\tau))|\, dx \le \int_\Omega (C + C |\nabla h_N(\cdot,\tau)| )\, dx \\
    &\le C + C \|\nabla h_N(\cdot,\tau)\|^2 \le C+C\|h_N(\cdot,\tau)\|^2 + \frac{\epsilon^2}{4}\|\Delta h_N(\cdot,\tau)\|^2,
\end{aligned}
\end{equation}
where $C$ is a positive general constant.
Combining (\ref{eq:hNglobal2})-(\ref{eq:hNglobal3}) and applying Lemma \ref{lem:PNapproximation}
as well as (\ref{eq:hNbound1}) give Inequality (\ref{eq:hNbound2}) except for the $\|h_N\|_{L^{2}(0,T; H^4(\Omega))}$ bound.

Finally, we shall estimate the $\|h_N\|_{L^{2}(0,T; H^4(\Omega))}$ bound in Inequality (\ref{eq:hNbound2}). Denote by $\partial$ any first-order
spatial derivative. Setting $\phi = -\partial^2 h_N$ in (\ref{eq:Spectraleq}) and using integration by parts to get
\begin{equation} \label{eq:hNH3}
\begin{aligned}
\frac{1}{2}\frac{d}{dt} \|\partial h_N\|^2 + \gamma \|\nabla\partial h_N\|^2 + \epsilon^2 \|\Delta \partial h_N\|^2
- \int_{\Omega} \partial \left[ \Phi(|\nabla h_N|) \nabla h_N\right] \cdot \nabla \partial h_N\, dx &= -\langle \zeta,\partial^2 h_N\rangle \\
&\le C\|\zeta\|^2 + \frac{\gamma}{4}\|\partial^2 h_N\|^2.
\end{aligned}
\end{equation}
By Lemma \ref{lem:PhiBounds}, especially noticing that $\Phi'(|\nabla h_N|)<0$, we have
$$
\partial \left[ \Phi(|\nabla h_N|) \nabla h_N\right] \cdot \nabla \partial h_N  =
 \Phi(|\nabla h_N|) |\nabla \partial h_N|^2 + \frac{\Phi'(|\nabla h_N|)}{|\nabla h_N|} (\nabla h_N\cdot \nabla \partial h_N)^2
\le C |\nabla \partial h_N|^2.
$$
Then, applying Lemma \ref{lem:PerLaplacian} to $\partial h_N$ and $h_N$ and using the Young's inequality give
$$
\begin{aligned}
\int_{\Omega} \partial \left[ \Phi(|\nabla h_N|) \nabla h_N\right] \cdot \nabla \partial h_N\, dx &\le C \|\nabla \partial h_N\|^2 \le C \|\partial h_N\|^2 \|\Delta \partial h_N\|^2 \\
&\le C\|\partial h_N\|^2 + \frac{\epsilon^2}{2} \|\Delta \partial h_N\|^2 .
\end{aligned}
$$
Substitute the above inequality into (\ref{eq:hNH3}), integrate against $t$, and use (\ref{eq:hNbound1}), one gets
\begin{equation} \label{eq:H3Bound}
\|h_N\|_{L^2(0,T;H^3(\Omega))} \le C( \|h_0\|_{H^2(\Omega)}, \|\zeta\|_{L^{2}(0,T; L^2(\Omega)}).
\end{equation}

Now similarly, set $\phi = \Delta^2 h_N$ in (\ref{eq:Spectraleq}), integrate against $t$ and apply the Young's inequality.
Again by Lemma \ref{lem:PhiBounds}, \ref{lem:PerLaplacian} and Sobolev embedding, we have
$$
\begin{aligned}
&\frac{\epsilon^2}{2} \int_0^T \|\Delta^2 h_N\|^2\, dt
\le  C( \|h_0\|^2_{H^2(\Omega)}, \|\zeta\|^2_{L^{2}(0,T; L^2(\Omega))})
    + \left|\int_0^T \int_{\Omega} \nabla\cdot(\Phi(|\nabla h_N|)\nabla h_N) \, \Delta^2 h_N \, dx\, dt\right| \\
= & C( \|h_0\|^2_{H^2(\Omega)}, \|\zeta\|^2_{L^{2}(0,T; L^2(\Omega))})
     + \left|\int_0^T \int_{\Omega} \bigg[\Phi'(|\nabla h_N|)\frac{(\nabla h_N)^t \nabla^2 h_N (\nabla h_N)}{|\nabla h_N|}
           + \Phi(|\nabla h_N|) \Delta h_N \bigg] \Delta^2 h_N \, dx\, dt\right| \\
 \le & C( \|h_0\|^2_{H^2(\Omega)}, \|\zeta\|^2_{L^{2}(0,T; L^2(\Omega))})
      + C \int_0^T \int_{\Omega}\bigg[ |\nabla h_N|\, |\nabla^2 h_N| + |\Delta h_N| \bigg] | \Delta^2 h_N| \, dx\, dt \\
 \le & C( \|h_0\|^2_{H^2(\Omega)}, \|\zeta\|^2_{L^{2}(0,T; L^2(\Omega))})
      + C \int_0^T \bigg[ \|h_N\|_{W^{1,4}(\Omega)}^2 \|h_N\|_{W^{2,4}(\Omega)}^2 + \|h_N\|_{H^2(\Omega)}^2 \bigg]  \, dt
       + \frac{\epsilon^2}{4} \int_0^T \|\Delta^2 h_N\|^2\, dt\\
\le & C( \|h_0\|^2_{H^2(\Omega)}, \|\zeta\|^2_{L^{2}(0,T; L^2(\Omega))},\|h_N\|_{L^2(0,T;H^3(\Omega))}^2) + \frac{\epsilon^2}{4} \int_0^T \|\Delta^2 h_N\|^2\, dt.
\end{aligned}
$$
This, together with Lemma \ref{lem:PerLaplacian} and Inequality (\ref{eq:H3Bound}), completes the proof of the lemma.
\end{proof}

\subsubsection{Existence and uniqueness of the weak solution}
Now we state the main existence, uniquess, and regularity theorem:

\begin{theorem} \label{thm:weakSolution}
Let $h_0\in H^2_{per}(\Omega)$ and $\zeta \in L^{2}(0,T; L^2(\Omega))$, then System (\ref{eq:diffeq}) has a unique weak solution
$h\in L^{\infty}(0,T; H^2_{per}(\Omega))\cap L^{2}(0,T; H^4_{per}(\Omega))$ with $\partial_t h\in L^2(0,T; L^2(\Omega))$.
\end{theorem}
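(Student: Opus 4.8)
The plan is to follow Lions' method exactly as in \cite{LL03}: start from the Galerkin approximations $h_N$ and their uniform bounds from Lemma \ref{lem:GalerkinSolution}, extract convergent subsequences, pass to the limit in (\ref{eq:Spectraleq}), and finally check the initial condition and prove uniqueness. Since $h_0\in H^2_{per}(\Omega)$ and $\zeta\in L^2(0,T;L^2(\Omega))$, the hypotheses of estimate (\ref{eq:hNbound2}) are met, so $\{h_N\}$ is bounded in $L^\infty(0,T;H^2_{per}(\Omega))\cap L^2(0,T;H^4_{per}(\Omega))$ and $\{\partial_t h_N\}$ is bounded in $L^2(0,T;L^2(\Omega))$, all uniformly in $N$. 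By Banach--Alaoglu and reflexivity, along a subsequence (not relabeled) one has $h_N\overset{*}{\rightharpoonup} h$ in $L^\infty(0,T;H^2_{per})$, $h_N\rightharpoonup h$ in $L^2(0,T;H^4_{per})$, and $\partial_t h_N\rightharpoonup\partial_t h$ in $L^2(0,T;L^2)$, for some limit $h$ in the regularity class claimed by the theorem (and $h$ is $\Omega$-periodic and mean value free, these being closed conditions). By the Aubin--Lions lemma, using the compact embedding $H^2_{per}(\Omega)\hookrightarrow\hookrightarrow H^1_{per}(\Omega)$ together with the bounds on $h_N$ and $\partial_t h_N$, we also get $h_N\to h$ strongly in $L^2(0,T;H^1_{per}(\Omega))$; in particular $\nabla h_N\to\nabla h$ strongly in $L^2((0,T)\times\Omega)$.

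Next I would pass to the limit. Fix $M\ge 1$, $\phi\in H_M$, and $\psi\in C^1_c((0,T))$; for $N\ge M$ the function $\phi$ is admissible in (\ref{eq:Spectraleq}), and multiplying by $\psi$ and integrating in $t$ gives an identity whose $\langle\partial_t h_N,\phi\rangle$ term and whose two linear bilinear-form terms converge to the corresponding terms with $h$ in place of $h_N$, by the weak convergences above. For the nonlinear term, the key point is that Lemma \ref{lem:GkBounds1} gives the uniform Hessian bound $|\nabla_F^2 G(\vm{m})|\le C$ on all of $\bbR^2$; since $\vm{J}_{ES}(\vm{m})=\Phi(|\vm{m}|)\vm{m}=-\nabla_F G(\vm{m})$ with $G\in C^2(\bbR^2)$ (Lemma \ref{lem:gradHessianGk}), this means $\vm{J}_{ES}$ is \emph{globally} Lipschitz on $\bbR^2$, so
$$
\big\|\,\Phi(|\nabla h_N|)\nabla h_N-\Phi(|\nabla h|)\nabla h\,\big\|_{L^2((0,T)\times\Omega)}\le C\,\|\nabla h_N-\nabla h\|_{L^2((0,T)\times\Omega)}\longrightarrow 0 ,
$$
and pairing with the fixed $L^2$ function $\psi\,\nabla\phi$ shows that the nonlinear term converges as well. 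Hence $h$ satisfies the resulting identity for all $\phi$ in the dense subspace $\bigcup_M H_M\subset H^2_{per}(\Omega)$ and all $\psi\in C^1_c((0,T))$; since, for a.e.\ $t$, the maps $\phi\mapsto\langle\partial_t h,\phi\rangle$, $\phi\mapsto a(h,\phi)$ (continuous on $H^2_{per}$ thanks to $\Phi\le C$, Lemma \ref{lem:PhiBounds}), and $\phi\mapsto\langle\zeta,\phi\rangle$ are bounded on $H^2_{per}(\Omega)$, a standard density argument upgrades this to: $h$ satisfies (\ref{eq:weakeq}) for all $\phi\in H^2_{per}(\Omega)$ and a.e.\ $t\in(0,T)$.

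For the initial condition, $h\in L^2(0,T;H^4_{per})$ with $\partial_t h\in L^2(0,T;L^2)$ implies $h\in C([0,T];H^2_{per})$ (Lions--Magenes interpolation), so $h(\cdot,0)$ is well defined; taking $\psi\in C^1([0,T])$ with $\psi(0)=1$, $\psi(T)=0$, integrating the time derivatives by parts in the identities for both $h_N$ and $h$, and using $h_N(\cdot,0)=P_Nh_0\to h_0$ in $L^2$ (Lemma \ref{lem:PNapproximation}), one obtains $\langle h(\cdot,0)-h_0,\phi\rangle=0$ for all $\phi\in\bigcup_M H_M$, hence $h(\cdot,0)=h_0$. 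For uniqueness, if $h_1,h_2$ are two weak solutions with the same data, then $w:=h_1-h_2$ satisfies $w(\cdot,0)=0$, and testing the difference of the weak formulations with $\phi=w$ (valid since $w(\cdot,t)\in H^2_{per}$ a.e.) gives
$$
\tfrac12\tfrac{d}{dt}\|w\|^2+\gamma\|\nabla w\|^2+\epsilon^2\|\Delta w\|^2=\big\langle\Phi(|\nabla h_1|)\nabla h_1-\Phi(|\nabla h_2|)\nabla h_2,\,\nabla w\big\rangle\le C\|\nabla w\|^2\le \epsilon^2\|\Delta w\|^2+C\|w\|^2,
$$
using again the global Lipschitz bound, Lemma \ref{lem:PerLaplacian}, and Young's inequality; absorbing $\epsilon^2\|\Delta w\|^2$ and applying Gronwall's inequality yields $w\equiv 0$.

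The extraction of weak limits, the Aubin--Lions compactness step, and the density arguments are routine; the one place where the structure of the model really matters is the passage to the limit in the nonlinear term (and, similarly, the uniqueness estimate), and this is where I expect the main obstacle to lie. Here it is handled cleanly by the uniform Hessian bound $|\nabla_F^2 G|\le C$ from Lemma \ref{lem:GkBounds1}, i.e.\ by the \emph{global} Lipschitz continuity of $\vm{J}_{ES}$, which is a genuine simplification over the more delicate monotonicity and compactness arguments needed for $\vm{J}_{F,ES}$ and $\vm{J}_{FSS,ES}$ in \cite{LL03}. (Throughout one uses $\epsilon^2>0$, which is implicit in the statement and already exploited in the proof of Lemma \ref{lem:GalerkinSolution}.)
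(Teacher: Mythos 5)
Your proposal is correct and follows essentially the same route as the paper: Galerkin approximation, the a priori bounds of Lemma \ref{lem:GalerkinSolution}, compactness giving strong convergence of $h_N$ in $L^2(0,T;H^1_{per}(\Omega))$, passage to the limit in the nonlinear term, weak lower semicontinuity for the $L^2(0,T;H^4)$ regularity, and a Gronwall-type uniqueness estimate based on the Hessian bound for $G$. The only (minor) difference is that you handle the nonlinear term via the global Lipschitz continuity of $\vm{J}_{ES}$ from Lemma \ref{lem:GkBounds1}, whereas the paper splits the difference into three pieces and bounds $|\Phi(|\nabla h_N|)-\Phi(|\nabla h|)|\le C|\nabla h_N-\nabla h|$ using Lemma \ref{lem:PhiBounds} (at the cost of an extra $L^4$ H\"older step); your version is marginally cleaner but rests on the same structural facts.
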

\begin{proof}
We start from proving the existence of the weak solution.
Using Equation (\ref{eq:hNbound2}) and a compactness argument, it has been proved in Theorem 3.1 of \cite{LL03}
that there is a subsequence of Galerkin spectral approximations $h_N$ converging to a function $h\in L^{\infty}(0,T; H^2_{per}(\Omega))$  with $\partial_t h\in L^2(0,T; L^2(\Omega))$
in the following sense:
$$
h_N \stackrel{N\to \infty}{\xrightarrow{\hspace*{0.8cm}}} h \qquad \textrm{ in } L^{2}(0,T; H^1(\Omega)) \textrm{ strongly}.
$$
The rest of the proof of the existence also follows directly from the proof of Theorem 3.1 in \cite{LL03}.
Due to the different $\vm{J}_{ES}$ term, here we only need to re-prove Equation (3.14) in  \cite{LL03}.
That is to prove for any $\phi\in H^2_{per}(\Omega)$ and $\eta\in C[0,T]$,
\begin{equation} \label{eq:wellposednessthmproof1}
\left|\int_0^T \langle \eta(t)\nabla P_N\phi, \Phi(|\nabla h_N(\cdot,t)|)\nabla h_N(\cdot, t)\rangle\, dt
   -  \int_0^T \langle \eta(t)\nabla \phi, \Phi(|\nabla h(\cdot,t)|)\nabla h(\cdot, t)\rangle\, dt\right| \stackrel{N\to \infty}{\xrightarrow{\hspace*{0.8cm}}} 0.
\end{equation}
Indeed, by Lemma \ref{lem:PhiBounds}, \ref{lem:PNapproximation} and \ref{lem:GalerkinSolution}, the left-hand side of Equation (\ref{eq:wellposednessthmproof1}) satisfies
$$
\begin{aligned}
LHS &\le \left|\int_0^T \langle \eta(t)[\nabla P_N\phi-\nabla\phi], \Phi(|\nabla h_N(\cdot,t)|)\nabla h_N(\cdot, t)\rangle\, dt \right| \\
 &\qquad +\left|\int_0^T \langle \eta(t)\nabla \phi, [\Phi(|\nabla h_N(\cdot,t)|)-\Phi(|\nabla h(\cdot,t)|)]\nabla h(\cdot, t)\rangle\, dt\right| \\
 &\qquad\qquad + \left|\int_0^T \langle \eta(t)\nabla \phi, \Phi(|\nabla h_N(\cdot,t)|)[\nabla h_N(\cdot, t)-\nabla h(\cdot,t)]\rangle\, dt\right| \\
 &\le C \|\eta\|_{L^{\infty}(0,T)} \|P_N\phi-\phi\|_{H^1(\Omega)} \int_0^T \|\nabla h_N\|_{L^2(\Omega)}\, dt \\
 &\qquad + C\|\eta\|_{L^{\infty}(0,T)} \|\nabla \phi\|_{L^4(\Omega)} \|h_N-h\|_{L^2(0,T; H^1(\Omega))} \|\nabla h\|_{L^2(0,T;L^4(\Omega))} \\
 &\qquad\qquad + C\|\eta\|_{L^{\infty}(0,T)} \|\nabla \phi\|_{L^2(\Omega)} \int_0^T \|\nabla h_N-\nabla h\|\, dt \\
 & \le C(T)  \|P_N\phi-\phi\|_{H^1(\Omega)} + C(T) \|h_N-h\|_{L^2(0,T; H^1(\Omega))} \stackrel{N\to \infty}{\xrightarrow{\hspace*{0.8cm}}} 0,
\end{aligned}
$$
where in above we have used the fact that
$$
\left|\Phi(|\nabla h_N(\cdot,t)|)-\Phi(|\nabla h(\cdot,t)|)\right| \le \left(\sup_{s\ge0} |\Phi'(s)| \right) \bigg||\nabla h_N(\cdot,t)|-|\nabla h(\cdot,t)|\bigg|
\le C \bigg| \nabla h_N(\cdot,t) - \nabla h(\cdot,t) \bigg|.
$$
This completes the proof of the existence for the weak solution.

To prove the uniqueness of the solution, let $g$ and $h$ be two solutions of  System (\ref{eq:diffeq}) with initial conditions $g_0\in H^2_{per}(\Omega)$, $h_0\in H^2_{per}(\Omega)$
and right-hand side functions $\eta \in L^{2}(0,T; L^2(\Omega))$,  $\zeta \in L^{2}(0,T; L^2(\Omega))$, respectively. Define $w=g-h$.
According to the previous proof of existence, one has $w\in L^{\infty}(0,T; H^2_{per}(\Omega))$.
Clearly for all $\phi\in H^2_{per}(\Omega)$ and $t\in (0,T]$, $w$ satisfies
$$
\langle\partial_t w, \phi\rangle + \gamma \langle\nabla w,\nabla \phi\rangle + \epsilon \langle\Delta w,\Delta \phi\rangle
 - \langle\Phi (|\nabla g|) \nabla g-\Phi (|\nabla h|) \nabla h, \nabla\phi\rangle = \langle\eta-\zeta,\phi\rangle.
$$
Set $\phi=w(\cdot,t)$ in the above equation. By the mean value theorem,
Lemma \ref{lem:gradHessianGk}, Lemma \ref{lem:GkConcave}, Lemma \ref{lem:PerLaplacian} and the Young's inequality, one gets
$$
\begin{aligned}
\frac{1}{2}\frac{d}{dt}\|w\|^2 + \gamma\|\nabla w\|^2 + \epsilon^2\|\Delta w\|^2 
& = -\langle \nabla_F G(\nabla g) - \nabla_F G(\nabla h),\nabla w\rangle + \langle\eta-\zeta,w \rangle\\
&= -\langle \nabla_F^2 G(\vm{m}) \nabla w,\nabla w\rangle + \langle\eta-\zeta,w \rangle\\
&\le C \|\nabla w\|^2 + \|\eta-\zeta\|\, \|w\|\\
&\le \frac{\epsilon^2}{2}\|\Delta w\|^2 + C\|w\|^2 + \|\eta-\zeta\|^2,
\end{aligned}
$$
where $\vm{m}$ is a vector between $\nabla g$ and $\nabla h$ determined by the mean value theorem.
Then, by the Gr\"{o}nwall's inequality, we have
$$
\|w\|_{L^{\infty}(0,T;L^2(\Omega))}^2 + \|w\|_{L^{2}(0,T;H^2(\Omega))}^2 \le C(1+e^{CT})(\|g_0-h_0\|_{L^2(\Omega)}^2 + \|\eta-\zeta\|_{L^2(0,T;L^2(\Omega))}^2).
$$
The uniqueness of the weak solution follows immediately from the above estimate.

Finally, we prove the regularity of the solution, i.e., $h\in L^2(0,T; H^4_{per}(\Omega))$.
By Equation (\ref{eq:hNbound2}), $h_N$ is bounded in $L^2(0,T; H^4_{per}(\Omega))$.
Hence by passing to a subsequence if necessary, we have $h_N$ converges to $h$ weakly in $L^2(0,T; H^4_{per}(\Omega))$, i.e.,
$$
\|h\|_{L^2(0,T; H^4(\Omega))} \le \limsup_{N\to\infty} \|h_N\|_{L^2(0,T; H^4(\Omega))} \le C.
$$
This completes the proof of the theorem.
\end{proof}

\begin{rmk}
Similar to Theorem 3.3 in \cite{LL03}, one can achieve higher order regularity if $h_0$ and $\zeta$ are smoother. Also,
existence of weak solution can be proved with lower regularity requirement on $h_0$ and $\zeta$, if one uses different spaces
in the definition of the weak solution together with a refined compactness result, as discussed in  \cite{LL03}.
Here we skip these details in order to quickly get a functioning well-posedness result that allows us to
immediately start investigating numerical methods for the new model.
\end{rmk}

\subsection{Bounds of the solution and the roughness indicator}
As shown in the proof of Theorem \ref{thm:weakSolution},
by using the weak convergence of $h_N$ to $h$, one can pass the $\|h_N\|_{L^{\infty}(0,T,H^2(T))}\le C$ upper bound to $h$.
This upper bound, proved in Lemma \ref{lem:GalerkinSolution}, is just a rough estimate.
The purpose of this section is to derive more accurate upper bounds of the weak solution $h$ in certain norms or semi-norms.
To this end, we first state the following lemma:

\smallskip
\begin{lemma} \label{lem:PhikIntegral}
For all $\phi\in H^2_{per}(\Omega)$ that is mean value free, one has
\begin{equation}
\int_{\Omega} \Phi(|\nabla \phi|) |\nabla \phi|^2 \, dx \le \frac{\epsilon^2}{4}\|\Delta \phi\|^2 + C , \label{eq:PhikIntegral3}
\end{equation}
where $C$ is a positive general constant that does not depend on $\phi$.
\end{lemma}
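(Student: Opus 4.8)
The plan is to observe first that the bound one gets directly from Lemma~\ref{lem:PhiBounds}, namely $\Phi(|\nabla\phi|)\le C$ and hence $\int_\Omega \Phi(|\nabla\phi|)|\nabla\phi|^2\,dx\le C\|\nabla\phi\|^2$, is \emph{not} sufficient: $\|\nabla\phi\|^2$ cannot be absorbed into $\frac{\epsilon^2}{4}\|\Delta\phi\|^2$ up to a $\phi$-independent constant, since $\|\nabla\phi\|$ is unbounded over the class of admissible $\phi$. The right refinement is to use a model-specific feature of $\Phi_k$ that is stronger than mere boundedness, namely that $s\mapsto s\,\Phi_k(s)$ is bounded on $[0,\infty)$ for $k=1,2$. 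This gives $\Phi_k(s)\,s^2\le Cs$ for all $s\ge 0$, so the integrand in (\ref{eq:PhikIntegral3}) grows only linearly in $|\nabla\phi|$.

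To check the boundedness of $s\,\Phi_k(s)$ I would just use the explicit formulas. For $k=1$, $s\,\Phi_1(s)=\alpha_1(q-p)\,\frac{s}{(p+s)(q+s)}\le \frac{\alpha_1(q-p)}{q}$, using $\frac{s}{p+s}\le 1$ and $\frac{1}{q+s}\le\frac1q$. For $k=2$, $s\,\Phi_2(s)=\alpha_2\,\frac{ps}{pq+(p+q)s}\le\frac{\alpha_2 p}{p+q}$, on dropping the positive term $pq$ in the denominator. In both cases the bound depends only on $\alpha_k,p,q$, so after dropping the subscript we have $\Phi(s)\,s^2\le Cs$ for all $s\ge 0$.

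The conclusion then follows in three short steps. First, $\int_\Omega \Phi(|\nabla\phi|)|\nabla\phi|^2\,dx\le C\int_\Omega |\nabla\phi|\,dx\le C\,|\Omega|^{1/2}\|\nabla\phi\|$ by the Cauchy--Schwarz inequality. Second, since $\phi$ is mean value free, the Poincar\'e-type estimate of Lemma~\ref{lem:PerLaplacian} gives $\|\nabla\phi\|\le C\|\Delta\phi\|$. Third, Young's inequality in the form $C\,b\le \frac{\epsilon^2}{4}b^2+\frac{C^2}{\epsilon^2}$, applied with $b=\|\Delta\phi\|$, yields $\int_\Omega \Phi(|\nabla\phi|)|\nabla\phi|^2\,dx\le\frac{\epsilon^2}{4}\|\Delta\phi\|^2+C$, where the final $C$ depends only on $\Omega$, $\epsilon$, $\alpha_k$, $p$, $q$ --- in particular not on $\phi$, as claimed.

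I do not expect any real obstacle here. The only thing that must not be missed is that the boundedness of $\Phi$ from Lemma~\ref{lem:PhiBounds} alone is too weak, and one must instead extract the linear-growth bound $\Phi_k(s)\,s^2\le C(1+s)$ from the structure of $\Phi_k$; after that, the Poincar\'e inequality (which is where mean-value-freeness enters) and Young's inequality finish the argument. The remaining work is minor bookkeeping: carrying the two cases $k=1,2$ through the verification of $s\,\Phi_k(s)\le C$, and checking that every constant produced is independent of $\phi$, which holds because $\epsilon$ and $\Omega$ are fixed data of the problem.
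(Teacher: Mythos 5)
Your proof is correct and follows essentially the same route as the paper: exploit the decay of $\Phi_k$ so that $\Phi_k(s)s^2$ grows at most linearly in $s$, then use Cauchy--Schwarz, the Poincar\'e-type bound $\|\nabla\phi\|\le C\|\Delta\phi\|$ from Lemma \ref{lem:PerLaplacian}, and Young's inequality. The only (harmless) difference is in the case $k=1$, where the paper uses the sharper fact that $\Phi_1(s)s^2\le\alpha_1(q-p)$ is itself bounded, so the integral is at most $\alpha_1(q-p)|\Omega|$ and the $\|\Delta\phi\|^2$ term is not even needed there.
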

\begin{proof}
When $\vm{J}_{ES}$ is set to $\vm{J}_{1,ES}$ defined in (\ref{J1}), by the definitions of $\Phi(\cdot)$, one has
$$
0\le \int_{\Omega} \Phi(|\nabla \phi|)|\nabla \phi|^2\, dx
=\alpha_1(q-p)\int_{\Omega}\frac{|\nabla \phi|^2}{(p+|\nabla \phi|)(q+|\nabla \phi|)}dx\le \alpha_1 (q-p)|\Omega|,
$$
and when $\vm{J}_{ES}$ is set to $\vm{J}_{2,ES}$ defined in (\ref{J3}), by Lemma \ref{lem:PerLaplacian} and Young's inequality, one has
$$
\begin{aligned}
0\le \int_{\Omega} \Phi(|\nabla \phi|)|\nabla \phi|^2\, dx &=\alpha_2\int_{\Omega}\frac{|\nabla \phi|^2}{q+ (p+q)|\nabla \phi|/p}dx
\le \frac{\alpha_2p}{p+q} \int_{\Omega} |\nabla \phi|\, dx \le \frac{\alpha_2p\sqrt{|\Omega|}}{p+q} \|\nabla \phi\| \\
&\le C\frac{\alpha_2 p\sqrt{|\Omega|}}{p+q}\|\Delta \phi\|
\le \frac{\epsilon^2}{4}\|\Delta \phi\|^2 + \frac{1}{\epsilon^2} \left(C\frac{\alpha_2 p\sqrt{|\Omega|}}{p+q}\right)^2.
\end{aligned}
$$
This completes the proof of the lemma.
\end{proof}

Now we can prove the following estimate:
\smallskip
\begin{lemma} \label{lem:hboundk1}
Let $h$ be the weak solution to (\ref{eq:diffeq}),
then one has for all $0\le t_0\le t\le T$,
$$
\frac{1}{2}\|h(\cdot, t)\|^2 + \gamma\int_{t_0}^t \|\nabla h\|^2\, d\tau + \frac{1}{2}\epsilon^2 \int_{t_0}^t \|\Delta h\|^2 \, d\tau
\le C(t-t_0) + \frac{1}{2}\|h(\cdot, t_0)\|^2 + \frac{1}{\epsilon^2} \|\zeta\|^2_{L^2(t_0,t;H^{-2}_{per}(\Omega))},
$$
where $C$ is the same constant as defined in Lemma \ref{lem:PhikIntegral},
which depends only on $\alpha_k$, $p$, $q$, $|\Omega|$, $\epsilon$ and $\gamma$.
\end{lemma}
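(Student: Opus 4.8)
\emph{Proof plan.} The plan is to read the estimate straight off the weak formulation (\ref{eq:weakeq}) by taking the test function $\phi=h(\cdot,\tau)$, which is admissible because Theorem~\ref{thm:weakSolution} gives $h(\cdot,\tau)\in H^2_{per}(\Omega)$ for a.e.\ $\tau\in(0,T)$. First I would record that the regularity $h\in L^2(0,T;H^2_{per}(\Omega))$ together with $\partial_t h\in L^2(0,T;L^2(\Omega))$ makes $t\mapsto\|h(\cdot,t)\|^2$ absolutely continuous and, by a standard argument (cf.\ \cite{Lions}), yields $\langle\partial_t h,h\rangle=\frac{1}{2}\frac{d}{d\tau}\|h\|^2$ for a.e.\ $\tau$. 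Substituting $\phi=h$ into (\ref{eq:weakeq}) and recalling that $a(h,h)=\gamma\|\nabla h\|^2+\epsilon^2\|\Delta h\|^2-\int_\Omega\Phi(|\nabla h|)|\nabla h|^2\,dx$, I obtain for a.e.\ $\tau\in(0,T)$
\[
\frac{1}{2}\frac{d}{d\tau}\|h\|^2+\gamma\|\nabla h\|^2+\epsilon^2\|\Delta h\|^2=\int_\Omega\Phi(|\nabla h|)\,|\nabla h|^2\,dx+\langle\zeta,h\rangle .
\]

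Next I would bound the two terms on the right. Since $h$ is mean value free throughout the paper, Lemma~\ref{lem:PhikIntegral} applies with $\phi=h(\cdot,\tau)$ and controls the nonlinear term by $\frac{\epsilon^2}{4}\|\Delta h\|^2+C$ with exactly the constant $C$ named in the statement. For the forcing term, again because $h$ is mean value free, Lemma~\ref{lem:PerLaplacian} makes $\|\Delta\cdot\|$ an equivalent norm on this subspace, so duality gives $\langle\zeta,h\rangle\le\|\zeta\|_{H^{-2}_{per}(\Omega)}\,\|\Delta h\|$ and Young's inequality produces $\langle\zeta,h\rangle\le\frac{\epsilon^2}{4}\|\Delta h\|^2+\frac{1}{\epsilon^2}\|\zeta\|^2_{H^{-2}_{per}(\Omega)}$ (any Poincar\'e-type constant entering here is absorbed into $C$, consistently with the dependence on $|\Omega|$ already allowed in the statement). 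Inserting both bounds and absorbing the two copies of $\frac{\epsilon^2}{4}\|\Delta h\|^2$ into $\epsilon^2\|\Delta h\|^2$ leaves
\[
\frac{1}{2}\frac{d}{d\tau}\|h\|^2+\gamma\|\nabla h\|^2+\frac{1}{2}\epsilon^2\|\Delta h\|^2\le C+\frac{1}{\epsilon^2}\|\zeta\|^2_{H^{-2}_{per}(\Omega)} .
\]

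Finally I would integrate this differential inequality in $\tau$ over $[t_0,t]$, using $\int_{t_0}^{t}C\,d\tau=C(t-t_0)$ and $\int_{t_0}^{t}\|\zeta\|^2_{H^{-2}_{per}(\Omega)}\,d\tau=\|\zeta\|^2_{L^2(t_0,t;H^{-2}_{per}(\Omega))}$, and rearrange to land exactly on the asserted estimate. The only step that needs genuine care is the justification that $h$ itself is an admissible test function together with the time-derivative identity; everything else is a direct application of Lemmas~\ref{lem:PhikIntegral} and \ref{lem:PerLaplacian} and Young's inequality. If one prefers to sidestep the energy-identity point, the same computation can instead be run on the Galerkin approximation $h_N$ — testing (\ref{eq:Spectraleq}) with $h_N$, where it is literally valid — and one then passes to the limit using the strong $L^2(0,T;H^1(\Omega))$ convergence and the weak lower semicontinuity of the $L^2$-norms established in the proof of Theorem~\ref{thm:weakSolution}, with a little extra attention to the endpoint value at $\tau=t_0$.
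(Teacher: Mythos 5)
Your proposal is correct and follows essentially the same route as the paper: test (\ref{eq:weakeq}) with $\phi=h$, bound the nonlinear term via Lemma~\ref{lem:PhikIntegral}, handle $\langle\zeta,h\rangle$ by duality, Lemma~\ref{lem:PerLaplacian} and Young's inequality, absorb the $\frac{\epsilon^2}{4}\|\Delta h\|^2$ terms, and integrate from $t_0$ to $t$. Your extra care in justifying the energy identity $\langle\partial_t h,h\rangle=\frac12\frac{d}{d\tau}\|h\|^2$ (or alternatively running the argument at the Galerkin level) is a refinement the paper leaves implicit, not a different method.
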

\begin{proof}
Setting $\phi=h$ in (\ref{eq:weakeq}) and using Lemma \ref{lem:PhikIntegral} give
$$
\begin{aligned}
\frac{1}{2}\frac{d}{dt}\|h\|^2 + \gamma\|\nabla h\|^2 + \epsilon^2\|\Delta h\|^2 &= \int_{\Omega}\Phi(|\nabla h|)|\nabla h|^2\, dx + \langle \zeta,h\rangle \\
&\le C + \frac{\epsilon^2}{4}\|\Delta h\|^2 + \frac{1}{\epsilon^2}\|\zeta\|_{H^{-2}_{per}(\Omega)} + \frac{\epsilon^2}{4}\|\Delta h\|^2.
\end{aligned}
$$
Combine common terms and then integrate from $t_0$ to $t$, this completes the proof of the lemma.
\end{proof}
\smallskip
\begin{rmk}
Lemma \ref{lem:hboundk1} states that $\|h\|^2$, $\int_{t_0}^t \|\nabla h\|^2\, d\tau$, and $\int_{t_0}^t \|\Delta h\|^2 \, d\tau$
have at most linear growth rate with respect to $t$, starting from any point $0\le t_0\le T$.
This is globally better than exponential growth.
\end{rmk}
\smallskip

\begin{rmk}
In \cite{LL04}, the authors have studied the evolution of the surface roughness indicator for the finite ES barrier with slop selection case, i.e.,
the ES currents is $\vm{J}_{FSS,ES}$. Here, by Lemma \ref{lem:hboundk1}, we immediately have a same surface roughness evolution bound
for the ES current $\vm{J}_{k,ES}$ with $k=1,2$, since by definition one has $|\Omega|\omega^2 = \|h\|^2$ for $t\in[0,T]$.
When $\zeta =0$, one gets a global bound for the growth of $\omega(t)$:
\begin{equation} \label{eq:omegaGlobal}
\omega(t)\le \sqrt{C(t-t_0) + \omega^2(t_0)}\qquad \textrm{for all }0\le t_0\le t \le T.
\end{equation}
\end{rmk}

Next, we aim at deriving a local bound for the growth of $\omega(t)$ that is better than the global bound when the value of $\omega(t)$ is small.
To this end, we first point out that the upper bound in Lemma \ref{lem:PhikIntegral} is not very sharp when $|\nabla \phi |$ is small.
For small $|\nabla \phi |$, one shall consider the following alternative bound:
\smallskip

\begin{lemma} \label{lem:PhikIntegralAlt}
For all $\phi\in H^2_{per}(\Omega)$ that is mean value free, one has
\begin{equation}
\int_{\Omega} \Phi(|\nabla \phi|) |\nabla \phi|^2 \, dx \le \frac{\epsilon^2}{4}\|\Delta \phi\|^2 + C\|\phi\|^2 , \label{eq:PhikIntegralAlt}
\end{equation}
where $C$ is a positive general constant that does not depend on $\phi$.
\end{lemma}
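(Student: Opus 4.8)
The plan is to exploit the uniform boundedness of $\Phi$ rather than the finer cancellations used in Lemma \ref{lem:PhikIntegral}, trading the $\phi$-independent constant there for a term proportional to $\|\phi\|^2$ that is sharper when $\phi$ is small. By Lemma \ref{lem:PhiBounds} one has $0<\Phi(s)\le C$ for all $s\ge 0$, with $C$ depending only on $\alpha_k$, $p$, $q$. Hence the integrand is controlled pointwise by $C|\nabla\phi|^2$, which yields at once
\[
\int_{\Omega} \Phi(|\nabla \phi|)|\nabla \phi|^2\, dx \le C\|\nabla \phi\|^2 .
\]

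Next I would bound $\|\nabla\phi\|^2$ in terms of $\|\phi\|$ and $\|\Delta\phi\|$. The first inequality of Lemma \ref{lem:PerLaplacian}, valid for every $\phi\in H^2_{per}(\Omega)$, gives $\|\nabla\phi\|^2\le \|\phi\|\,\|\Delta\phi\|$; one could alternatively route through the Poincar\'e estimate since $\phi$ is mean value free, but this direct inequality is cleaner and does not even require the hypothesis. Combining with the previous step,
\[
\int_{\Omega} \Phi(|\nabla \phi|)|\nabla \phi|^2\, dx \le C\|\phi\|\,\|\Delta\phi\| .
\]

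Finally, I would apply Young's inequality with the weight chosen so that the $\|\Delta\phi\|$ term carries coefficient $\epsilon^2/4$, namely $C\|\phi\|\,\|\Delta\phi\|\le \frac{\epsilon^2}{4}\|\Delta\phi\|^2+\frac{C^2}{\epsilon^2}\|\phi\|^2$, and then rename $C^2/\epsilon^2$ as a new constant $C$ to obtain (\ref{eq:PhikIntegralAlt}). There is no genuine obstacle here; the only point to verify is that the coefficient of $\|\phi\|^2$ in the end really is independent of $\phi$, which holds because every constant introduced along the way originates from Lemma \ref{lem:PhiBounds}, Lemma \ref{lem:PerLaplacian}, and the fixed parameter $\epsilon$. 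I would remark in passing that, in contrast with (\ref{eq:PhikIntegral3}), this form of the estimate is useful precisely in the regime where $\|\phi\|$ — equivalently the roughness $\omega$ — is small, which is exactly what is needed to derive the local roughness bound that follows.
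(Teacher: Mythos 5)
Your argument is correct and is essentially the proof in the paper: both bound the integrand by $C|\nabla\phi|^2$ via Lemma \ref{lem:PhiBounds}, then pass from $\|\nabla\phi\|^2$ to $\frac{\epsilon^2}{4}\|\Delta\phi\|^2+C\|\phi\|^2$ using Lemma \ref{lem:PerLaplacian} and Young's inequality. Your observation that the interpolation inequality $\|\nabla\phi\|^2\le\|\phi\|\,\|\Delta\phi\|$ does not even need the mean-value-free hypothesis is a fair, if minor, refinement.
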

\begin{proof}
By lemmas \ref{lem:PhiBounds}, \ref{lem:PerLaplacian} and the Young's inequality, one immediately has
$$
\int_{\Omega} \Phi(|\nabla \phi|) |\nabla \phi|^2 \, dx \le C\|\nabla \phi\|^2 \le \frac{\epsilon^2}{4}\|\Delta \phi\|^2 + C\|\phi\|^2.
$$
\end{proof}
\begin{rmk}
Comparing to Lemma \ref{lem:PhikIntegral}, Lemma \ref{lem:PhikIntegralAlt} is better when $\|\nabla \phi\|$ is small
(or consequently, by the Poincar\'{e} inequality, when $\|\phi\|$ is small).
\end{rmk}

Now we can derive the following local bound for $\omega(t)$:

\begin{lemma} \label{lem:localBoundOmega}
Let $h$ be the weak solution to (\ref{eq:diffeq}) and assume $\zeta = 0$. Then one has for all $0\le t_0\le t \le T$,
\begin{equation} \label{eq:omegaLocal}
\omega(t) \le \sqrt{\omega^2(t_0) e^{C(t-t_0)}}.
\end{equation}
\end{lemma}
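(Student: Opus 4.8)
The plan is to mimic the proof of Lemma \ref{lem:hboundk1}, but to replace the use of Lemma \ref{lem:PhikIntegral} by the sharper-for-small-gradient estimate of Lemma \ref{lem:PhikIntegralAlt}. The point is that Lemma \ref{lem:PhikIntegralAlt} has no additive constant, so the resulting differential inequality for $\|h\|^2$ is homogeneous, and Gr\"onwall's inequality then produces a purely multiplicative growth rate rather than the linear-in-$t$ additive growth of the global bound (\ref{eq:omegaGlobal}).

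First I would set $\phi = h(\cdot,t)$ in the weak formulation (\ref{eq:weakeq}). Since $\partial_t h\in L^2(0,T;L^2(\Omega))$ and $h\in L^{\infty}(0,T;H^2_{per}(\Omega))$ by Theorem \ref{thm:weakSolution}, the pairing $\langle\partial_t h,h\rangle$ equals $\frac{1}{2}\frac{d}{dt}\|h\|^2$ for almost every $t$, exactly as justified in Lemma \ref{lem:hboundk1}. Using $\zeta = 0$, this gives
$$
\frac{1}{2}\frac{d}{dt}\|h\|^2 + \gamma\|\nabla h\|^2 + \epsilon^2\|\Delta h\|^2 = \int_{\Omega}\Phi(|\nabla h|)|\nabla h|^2\, dx .
$$
Next I would apply Lemma \ref{lem:PhikIntegralAlt} to the right-hand side (note $h(\cdot,t)$ is mean value free for each $t$), obtaining
$$
\frac{1}{2}\frac{d}{dt}\|h\|^2 + \gamma\|\nabla h\|^2 + \epsilon^2\|\Delta h\|^2 \le \frac{\epsilon^2}{4}\|\Delta h\|^2 + C\|h\|^2 .
$$
Discarding the non-negative terms $\gamma\|\nabla h\|^2$ and $\frac{3\epsilon^2}{4}\|\Delta h\|^2$ on the left leaves $\frac{d}{dt}\|h\|^2 \le 2C\|h\|^2$, so Gr\"onwall's inequality yields $\|h(\cdot,t)\|^2 \le \|h(\cdot,t_0)\|^2 e^{2C(t-t_0)}$ for all $0\le t_0\le t\le T$. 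Dividing by $|\Omega|$ and recalling $|\Omega|\,\omega^2(t) = \|h(\cdot,t)\|^2$ gives $\omega^2(t)\le \omega^2(t_0)e^{2C(t-t_0)}$, which is (\ref{eq:omegaLocal}) after renaming $2C$ as $C$.

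There is essentially no hard step here; the only point requiring thought is the decision to invoke Lemma \ref{lem:PhikIntegralAlt} instead of Lemma \ref{lem:PhikIntegral}. The former sacrifices the clean linear global bound but removes the additive constant, so that when $\omega(t_0)$ is small the roughness stays small for a controlled time — precisely the local behaviour being sought. I would also add a one-line remark that, taken together, (\ref{eq:omegaGlobal}) and (\ref{eq:omegaLocal}) bound $\omega(t)$ by the minimum of the two estimates, which is consistent with the rough--smooth--rough pattern mentioned in the introduction.
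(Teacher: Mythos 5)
Your proposal is correct and is essentially identical to the paper's own proof: test with $\phi=h$, apply Lemma \ref{lem:PhikIntegralAlt} to the nonlinear term, and conclude via Gr\"onwall together with $|\Omega|\,\omega^2=\|h\|^2$. The only cosmetic difference is that you track the factor of $2$ in the exponent explicitly, which the paper absorbs into the general constant $C$.
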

\begin{proof}
Setting $\phi=h$ in (\ref{eq:weakeq}) and using Lemma \ref{lem:PhikIntegralAlt} give
$$
\frac{1}{2}\frac{d}{dt}\|h\|^2 + \gamma\|\nabla h\|^2 + \epsilon^2\|\Delta h\|^2 = \int_{\Omega}\Phi(|\nabla h|)|\nabla h|^2\, dx
\le C\|h\|^2 + \frac{\epsilon^2}{4}\|\Delta h\|^2 ,$$
which implies
$$
 \frac{d}{dt}\|h\|^2  \le C\|h\|^2.
$$
Solve the ordinary differential equation and use $|\Omega|\omega^2 = \|h\|^2$, one gets the local bound (\ref{eq:omegaLocal}).
\end{proof}

\begin{figure}[h]
\begin{center}
\includegraphics[width=5cm]{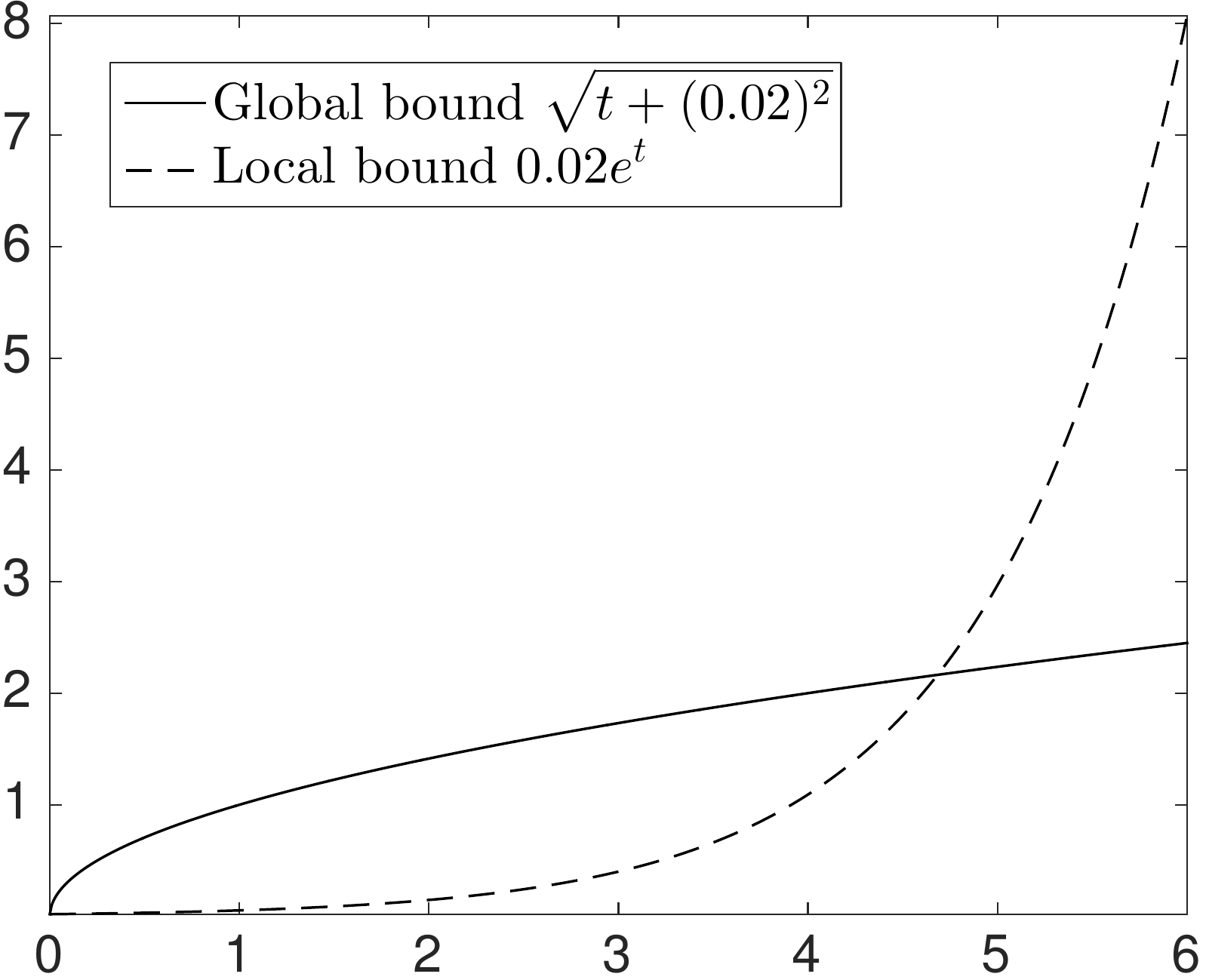}
\caption{Difference between the global bound (\ref{eq:omegaGlobal}) and the local bound (\ref{eq:omegaLocal}), illustrated
in an example with $t_0=0$, $\omega(0) = 0.02$ and $C = 1$. The local bound is better when $t$ is small.
More importantly, the local bound prescribes a ``flat" start-up process in the beginning of the evolution, when $\omega(t_0)$ is small.} \label{fig:globalAndLocal}
\end{center}
\end{figure}

\begin{rmk}
Generally speaking, when $t-t_0$ is large, the global bound (\ref{eq:omegaGlobal}), with growth rate $O(\sqrt{t-t_0})$, is
much smaller than the local bound (\ref{eq:omegaLocal}), which is an exponential growth.
However, when $\omega(t_0) \approx 0$ and $t-t_0$ is small, the local bound (\ref{eq:omegaLocal}) becomes
 smaller than the global bound (\ref{eq:omegaGlobal}).
An illustration is given in Figure \ref{fig:globalAndLocal}.
The main reason that we derive the local bound is to explain that the local growth of $\omega(t)$, when the value of $\omega(t)$ is small,
is indeed ``flat" rather than the ``abrupt" growth prescribed by (\ref{eq:omegaGlobal}).
Later, this pattern can be observed in numerical results.
Other intermediate estimates between the local bound and the global bound can also be obtained easily through interpolation. 
For example, one can prove that $\omega(t)\le (C(t-t_0)+w^\frac{4}{3}(t_0))^{\frac{3}{4}}$. 
Such intermediate estimates may be more accurate to describe the growth rate in some stages of the evolution.
Just as we know, to strictly depict the growth rate in different stages is still one difficult task.
\end{rmk}

\begin{rmk}
When the ES current is taken to be $\vm{J}_{3,ES}$, we do not have an existence and uniqueness theory due to the unboundedness of
$\vm{J}_{3,ES}$ at $|\nabla h|=0$. However, if there exists a weak solution, then the global and local bounds in
(\ref{eq:omegaGlobal}) and (\ref{eq:omegaLocal}) will apply, because one can easily show that for all $\phi\in H^2_{per}(\Omega)$,
$$
\begin{aligned}
\int_{\Omega} \Phi_3(|\nabla \phi|) |\nabla \phi|^2 \, dx &= \int_{\Omega} \alpha_3\frac{|\nabla \phi|}{|\nabla \phi|+(q-p)}\, dx
   \le \alpha_3 |\Omega| = C,  \\
\int_{\Omega} \Phi_3(|\nabla \phi|) |\nabla \phi|^2 \, dx  &= \int_{\Omega} \alpha_3\frac{|\nabla \phi|}{|\nabla \phi|+(q-p)}\, dx
   \le \frac{\alpha_3}{q-p} \int_{\Omega} |\nabla \phi|\, dx
   \le \frac{\epsilon^2}{4}\|\Delta \phi\|^2 + C\|\phi\|^2 ,
\end{aligned}
$$
which are parallel to the results in lemmas \ref{lem:PhikIntegral} and \ref{lem:PhikIntegralAlt}.
Thus inequalities (\ref{eq:omegaGlobal}) and (\ref{eq:omegaLocal}) hold.
\end{rmk}
\subsection{Energy functional} \label{sec:energyfunctional}
Define an energy functional associated with Equation (\ref{eq:diffeq}) as follows
$$
E(h) = \int_\Omega \left[ G(\nabla h) + \frac{\gamma}{2} |\nabla h|^2 + \frac{\epsilon^2}{2} |\Delta h|^2 \right] \, dx.
$$
Then the differential equation (\ref{eq:diffeq}) can be written as $\partial_t h = \zeta -\frac{\delta E(h)}{\delta h}$,
where
$\frac{ \delta E (h)}{\delta h}$ denotes the Fr\'{e}chet derivative of the energy functional with respect to $h$.

Note that by definition $G(\nabla h)$ can be negative, which implies that the energy functional $E(h)$ can be negative too.
However, by using the Poincar\'{e} inequality, Lemma \ref{lem:PerLaplacian} and choosing $\beta$ carefully, one immediately has the following lower bound of $E(h)$:
\smallskip
\begin{lemma} \label{lem:energyLowerBound}
The energy functional has lower bound
$$
E(h) \ge -C,
$$
where $C$ is a positive constant independent of $h$.
\end{lemma}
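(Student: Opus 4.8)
The goal is to show $E(h)\ge -C$ for a constant $C$ independent of $h$, where

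\[
E(h) = \int_\Omega \left[ G(\nabla h) + \frac{\gamma}{2} |\nabla h|^2 + \frac{\epsilon^2}{2} |\Delta h|^2 \right] \, dx.
\]

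The strategy is to absorb the potentially negative contribution of $\int_\Omega G(\nabla h)\,dx$ into the nonnegative quadratic terms, using Lemma \ref{lem:GkBounds2}. First I would apply Lemma \ref{lem:GkBounds2} pointwise with a parameter $\beta > 0$ to be chosen: for every $\vm{x}\in\Omega$,
\[
G(\nabla h(\vm{x})) \ge -\beta|\nabla h(\vm{x})|^2 - C_\beta,
\]
so that integrating over $\Omega$ gives
\[
\int_\Omega G(\nabla h)\,dx \ge -\beta\|\nabla h\|^2 - C_\beta|\Omega|.
\]

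Substituting this into the definition of $E(h)$ yields
\[
E(h) \ge \left(\frac{\gamma}{2} - \beta\right)\|\nabla h\|^2 + \frac{\epsilon^2}{2}\|\Delta h\|^2 - C_\beta|\Omega|.
\]
If $\gamma > 0$, I would simply choose $\beta = \gamma/4$, making the $\|\nabla h\|^2$ coefficient nonnegative, and the bound $E(h)\ge -C_\beta|\Omega| =: -C$ follows at once. The mild subtlety — and the only real point requiring care — is the case $\gamma = 0$ (or $\gamma$ too small to dominate), where there is no $\|\nabla h\|^2$ term to absorb into. Here I would instead use Lemma \ref{lem:PerLaplacian}: since $h$ is mean value free (as assumed throughout the paper), the Poincar\'{e}-type inequality gives $\|\nabla h\|^2 \le \frac{1}{C_0}\|\Delta h\|^2$ for a constant $C_0$ depending only on $\Omega$. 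Then
\[
E(h) \ge -\beta\|\nabla h\|^2 + \frac{\epsilon^2}{2}\|\Delta h\|^2 - C_\beta|\Omega| \ge \left(\frac{\epsilon^2}{2} - \frac{\beta}{C_0}\right)\|\Delta h\|^2 - C_\beta|\Omega|,
\]
and choosing $\beta = \epsilon^2 C_0/4$ makes the leading coefficient nonnegative, giving $E(h)\ge -C_\beta|\Omega| =: -C$.

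In either case the resulting constant depends only on the fixed data $\alpha_k$, $p$, $q$, $\gamma$, $\epsilon$, and $\Omega$, but not on $h$, which is exactly what is claimed. I do not anticipate a genuine obstacle: the argument is a one-line Young/Poincar\'{e} absorption, and the only thing to watch is that $\epsilon^2 = \kappa + K_{EQ} > 0$ is implicitly needed (the model is ill-posed otherwise), so that at least one of the two nonnegative quadratic terms has a strictly positive coefficient available to soak up the $-\beta|\nabla h|^2$ slack. This is consistent with the standing assumptions of the paper.
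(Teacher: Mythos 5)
Your proposal is correct and follows exactly the route the paper intends: the paper's (one-line) justification is precisely to combine Lemma \ref{lem:GkBounds2} with the Poincar\'{e}-type inequality of Lemma \ref{lem:PerLaplacian} and choose $\beta$ small enough to absorb $-\beta\|\nabla h\|^2$ into $\frac{\epsilon^2}{2}\|\Delta h\|^2$. Your explicit handling of the case $\gamma=0$ via the $\epsilon^2$ term matches the paper's own later remark that one should avoid relying on the $\gamma$ term since $\gamma$ may vanish.
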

\smallskip

Another important observation is:
\smallskip
\begin{lemma} \label{lem:energyStability}
Let $\zeta\in L^{\infty}(0,T,L^2(\Omega))$, the energy functional $E(h)$ satisfies
$$
\frac{d}{dt} E(h) \le \frac{1}{2}\|\zeta\|^2 - \frac{1}{2} \|\partial_t h\|^2, \qquad\textrm{for }0\le t\le T.
$$
Note that $E(h)$ is non-increasing when $\zeta=0$.
\end{lemma}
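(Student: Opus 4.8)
The plan is to differentiate the energy functional $E(h)$ in time and identify the resulting expression with a term involving the weak formulation. Since $E(h) = \int_\Omega [G(\nabla h) + \frac{\gamma}{2}|\nabla h|^2 + \frac{\epsilon^2}{2}|\Delta h|^2]\, dx$, differentiating under the integral sign (justified by the regularity $h\in L^\infty(0,T;H^2_{per}(\Omega))$, $\partial_t h\in L^2(0,T;L^2(\Omega))$ from Theorem \ref{thm:weakSolution}, together with the growth bounds on $G$, $\nabla_F G$ from Corollary \ref{cor:GkBounds}) gives
$$
\frac{d}{dt} E(h) = \int_\Omega \nabla_F G(\nabla h)\cdot \nabla \partial_t h\, dx + \gamma \langle \nabla h, \nabla \partial_t h\rangle + \epsilon^2 \langle \Delta h, \Delta \partial_t h\rangle.
$$
By Lemma \ref{lem:gradHessianGk}, $\nabla_F G(\nabla h) = -\Phi(|\nabla h|)\nabla h$, so the first term equals $-\langle \Phi(|\nabla h|)\nabla h, \nabla \partial_t h\rangle$. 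Hence $\frac{d}{dt}E(h) = a(h,\partial_t h)$ with the form $a(\cdot,\cdot)$ defined in (\ref{eq:weakeq}).

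Next I would choose the test function $\phi = \partial_t h$ in the weak formulation (\ref{eq:weakeq}). This is legitimate because $\partial_t h(\cdot,t)\in L^2(\Omega)$ for a.e.\ $t$, and in fact the regularity $h\in L^2(0,T;H^4_{per}(\Omega))$ ensures $\Delta^2 h\in L^2$, so all terms in the strong form pair against $\partial_t h$; alternatively one argues the identity first at the Galerkin level (where $\partial_t h_N\in H_N$ is an admissible test function, exactly as in the derivation of (\ref{eq:hNglobal2}) inside the proof of Lemma \ref{lem:GalerkinSolution}) and then passes to the limit. Either way, $\langle \partial_t h, \partial_t h\rangle + a(h,\partial_t h) = \langle \zeta, \partial_t h\rangle$, i.e.
$$
\|\partial_t h\|^2 + \frac{d}{dt} E(h) = \langle \zeta, \partial_t h\rangle.
$$
Finally, bound the right-hand side by Cauchy--Schwarz and Young's inequality: $\langle \zeta,\partial_t h\rangle \le \frac{1}{2}\|\zeta\|^2 + \frac{1}{2}\|\partial_t h\|^2$. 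Rearranging yields $\frac{d}{dt}E(h) \le \frac{1}{2}\|\zeta\|^2 - \frac{1}{2}\|\partial_t h\|^2$, and setting $\zeta=0$ gives monotonicity.

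The main obstacle is the rigorous justification of differentiating $E(h)$ in $t$ and of using $\phi=\partial_t h$ as a test function, since a priori $\partial_t h$ only lies in $L^2(0,T;L^2(\Omega))$ and $E(h)$ involves $\Delta h$. I expect the cleanest route is to establish the energy identity at the Galerkin level for $h_N$ — precisely the computation already carried out to obtain (\ref{eq:hNglobal2}) in the proof of Lemma \ref{lem:GalerkinSolution} — obtaining $\frac{d}{dt}E(h_N) \le \frac12\|\zeta\|^2 - \frac12\|\partial_t h_N\|^2$, then integrate over $[t_0,t]$, and pass to the limit $N\to\infty$ using the strong convergence $h_N\to h$ in $L^2(0,T;H^1(\Omega))$ (for the $\int G(\nabla h_N)$ and $\gamma$ terms, via the Lipschitz-type bound on $\nabla_F G$ and Corollary \ref{cor:GkBounds}), the weak lower semicontinuity of the $\epsilon^2$ and $\|\partial_t h_N\|^2$ norm terms, to recover the integrated inequality $E(h(t)) - E(h(t_0)) \le \int_{t_0}^t(\frac12\|\zeta\|^2 - \frac12\|\partial_t h\|^2)\,d\tau$, which is the integrated form of the claimed differential inequality and suffices for the stated conclusion.
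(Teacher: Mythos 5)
Your proposal is correct and follows essentially the same route as the paper: the paper's proof simply sets $\phi=\partial_t h$ in (\ref{eq:weakeq}), identifies $a(h,\partial_t h)$ with $\frac{d}{dt}E(h)$, and applies Young's inequality. Your additional care about justifying the test function choice and the time differentiation (via the Galerkin level, as in the derivation of (\ref{eq:hNglobal2})) is a sound and more rigorous elaboration of the same argument.
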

\begin{proof}
Setting $\phi = \partial_t h$ in (\ref{eq:weakeq}) gives
$$
\|\partial_t h\|^2 + \frac{d}{dt} E(h) = \langle \zeta, \partial_t h\rangle \le \|\zeta\|\,\|\partial_t h\| \le \frac{1}{2}\|\zeta\|^2 + \frac{1}{2}\|\partial_t h\|^2.
$$
This completes the proof of the lemma.
\end{proof}
\smallskip

We shall discuss a little more about the special case when $\zeta=0$. In this case,
it is not hard to see that there is a one-to-one correspondence between the critical points of $E(\cdot)$,
i.e., $h$ satisfying $\frac{\delta}{\delta h} E(h) \triangleq \frac{d}{d\tau} E(h+\tau\phi)|_{\tau=0} = 0$,
and steady state solutions of (\ref{eq:diffeq}). Moreover, local minimums of $E(\cdot)$ give stable steady state solutions of (\ref{eq:diffeq}).
We also point out that $h\equiv 0$ is obviously a steady state solution in this case.
By Lemma \ref{lem:GkBounds1}, $E(h)$ becomes a convex functional of $\nabla h$ when $\gamma$ is large enough.
In this case, there exists a unique global minimizer, which is $h\equiv 0$.
Note here $h\equiv C$ is not considered because of the mean value free assumption.

\begin{rmk}
It is worth to point out that when the ES current is taken to be $\vm{J}_{3,ES}$, the energy functional is still well defined
since $G_3 \in C(\bbR^2)$ although it is not differentiable at the origin. Moreover, the lower bound in Lemma \ref{lem:energyLowerBound}
is also true because
$$
G_3(\nabla h) = -\alpha_3 \ln(q-p+|\nabla h|) \ge -C |\nabla h|-C \ge -\beta |\nabla h|^2 -C_{\beta},
$$
for any positive constant $\beta$.
\end{rmk}

\section{A semi-implicit fully-discrete numerical scheme}\label{Sprop}
In this section we develop a semi-implicit numerical scheme for approximating
the weak solution of (\ref{eq:diffeq}), using the technique of convex-splitting.
For illustrative purpose, the spatial discretization uses the Galerkin spectral
approximation with discrete space $H_N$ presented in Section \ref{Smodel}, though we point out that the scheme and
analysis also apply to other Galerkin approximations.
The numerical scheme is stated below.
We first split the form $a(\cdot,\cdot)$ defined in (\ref{eq:weakeq}) into two parts:
$$
\begin{aligned}
a_+(h,\phi) &= \gamma \langle\nabla h,\nabla \phi\rangle + \epsilon^2 \langle\Delta h,\Delta \phi\rangle + \langle \nabla_F G_+(\nabla h), \nabla\phi\rangle, \\
a_-(h,\phi) &= \langle \nabla_F G_-(\nabla h), \nabla\phi\rangle,
\end{aligned}
$$
which satisfy $a(h,\phi) = a_+(h,\phi) + a_-(h,\phi)$.

Given $\delta t = \frac{T}{M}$, where $M$ is a positive integer, and define $t_i = i\,\delta t$, for $i=0,\ldots,M$.
Denote by $h_N^i\in H_N$ and $\zeta^i=\zeta(\cdot, t_i)$, for $i=0,\ldots,M$ the numerical approximation and the deposition rate, respectively, at $t_i$.
Then the semi-implicit time discretization can be written as:
\begin{enumerate}
\item Set $h_N^0 = P_N h_0$;
\item For $i=1,\ldots, M$, compute $h_N^i$ by
\begin{equation} \label{eq:scheme}
\langle\frac{h_N^i-h_N^{i-1}}{\delta t}, \phi\rangle + a_+(h_N^i, \phi)
= \langle\zeta^i,\phi\rangle - a_-(h_N^{i-1},\phi),
\end{equation}
for all $\phi\in H_N$.
\end{enumerate}

By the definition of $G_+$ and $G_-$ in Corollary \ref{cor:convexsplitting}, it is
clear that $a_+(\cdot,\cdot)$ is a symmetric and coercive bilinear form, while $a_-(\cdot,\cdot)$ is nonlinear.
Hence the scheme is uniquely solvable at each time step.
Next, we consider the energy stability of the scheme.

For simplicity, denote
$$
\mathcal{G}_{+}(\phi) = \int_{\Omega} G_{+}(\nabla \phi) \, dx,\qquad \mathcal{G}_{-}(\phi) = \int_{\Omega} G_{-}(\nabla \phi) \, dx,
$$
for $\phi\in H^2_{per}(\Omega)$ and $k=1,2$.

\smallskip
\begin{lemma} \label{lem:convexconcave}
For $\phi,\psi\in H^2_{per}(\Omega)$, we have
$$
\begin{aligned}
\mathcal{G}_{+}(\phi) - \mathcal{G}_{+}(\psi) &\le \int_{\Omega} \nabla_F G_{+}(\nabla \phi)\cdot \nabla (\phi-\psi)\, dx, \\
\mathcal{G}_{-}(\phi) - \mathcal{G}_{-}(\psi) &\le \int_{\Omega} \nabla_F G_{-}(\nabla \psi)\cdot \nabla (\phi-\psi)\, dx.
\end{aligned}
$$
\end{lemma}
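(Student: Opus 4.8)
The plan is to reduce the inequalities to the pointwise convexity/concavity of $G_+$ and $G_-$ established earlier. Recall from Corollary \ref{cor:convexsplitting} that $G_+(\vm{m}) = \frac{1}{2}\chi|\vm{m}|^2$ is convex on $\bbR^2$ (its Hessian is $\chi I \succeq 0$), while $G_-(\vm{m}) = G(\vm{m}) - \frac{1}{2}\chi|\vm{m}|^2$ is concave on $\bbR^2$ by Lemma \ref{lem:GkConcave}. The standard first-order characterization of convexity gives, for any $\vm{a},\vm{b}\in\bbR^2$,
$$
G_+(\vm{a}) - G_+(\vm{b}) \le \nabla_F G_+(\vm{a})\cdot(\vm{a}-\vm{b}),
$$
and, by concavity of $G_-$,
$$
G_-(\vm{a}) - G_-(\vm{b}) \le \nabla_F G_-(\vm{b})\cdot(\vm{a}-\vm{b}).
$$
These are exactly the integrand-level versions of the claimed estimates.

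First I would apply the above pointwise inequalities with $\vm{a} = \nabla\phi(\vm{x})$ and $\vm{b} = \nabla\psi(\vm{x})$ for each $\vm{x}\in\Omega$, using that $\phi,\psi\in H^2_{per}(\Omega)$ so that $\nabla\phi,\nabla\psi\in H^1_{per}(\Omega)\subset L^2(\Omega)$ are well-defined a.e. Then I would integrate over $\Omega$. For the convex part this yields
$$
\mathcal{G}_+(\phi) - \mathcal{G}_+(\psi) = \int_\Omega \bigl(G_+(\nabla\phi) - G_+(\nabla\psi)\bigr)\, dx
 \le \int_\Omega \nabla_F G_+(\nabla\phi)\cdot\nabla(\phi-\psi)\, dx,
$$
and the concave part follows identically with $G_-$ and the roles of the arguments appropriately placed. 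One should check integrability: by Corollary \ref{cor:GkBounds}, $|G_\pm(\vm{m})| \le C(1+|\vm{m}|^2)$ and $|\nabla_F G_\pm(\vm{m})| \le C(1+|\vm{m}|)$, so all integrands are in $L^1(\Omega)$ whenever $\nabla\phi,\nabla\psi\in L^2(\Omega)$, which holds here; hence every integral above is finite and the manipulation is legitimate.

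The only genuine content is the pointwise first-order inequality for convex/concave $C^1$ functions, which is classical: for $C^1$ convex $F$ one has $F(\vm{a}) \ge F(\vm{b}) + \nabla_F F(\vm{b})\cdot(\vm{a}-\vm{b})$, equivalently $F(\vm{a}) - F(\vm{b}) \le \nabla_F F(\vm{a})\cdot(\vm{a}-\vm{b})$ (swap $\vm{a},\vm{b}$); for concave $F$ the inequalities reverse. Since $G_\pm\in C^1(\bbR^2)$ by Lemma \ref{lem:gradHessianGk} (indeed $C^2$) and their Hessians have the requisite sign everywhere, this applies directly. I do not anticipate a real obstacle here; the main thing to be careful about is bookkeeping — getting $\nabla_F G_+$ evaluated at $\nabla\phi$ (the ``new'' state) and $\nabla_F G_-$ evaluated at $\nabla\psi$ (the ``old'' state), since this asymmetry is precisely what makes the convex-concave splitting scheme \eqref{eq:scheme} energy stable. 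I would close by remarking that the same argument works verbatim for $\vm{J}_{3,ES}$ on any region where $\nabla\phi,\nabla\psi$ stay away from $\vm{0}$, using the splitting and Hessian formula recorded in Remark \ref{rem:J3}.
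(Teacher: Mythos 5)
Your proposal is correct and follows essentially the same route as the paper: both reduce the claim to the pointwise first-order convexity (resp.\ concavity) inequality for $G_{+}$ (resp.\ $G_{-}$) evaluated at $\nabla\phi(\vm{x})$ and $\nabla\psi(\vm{x})$, and then integrate over $\Omega$. The only difference is presentational — the paper re-derives that pointwise inequality via the mean value theorem and the sign of $\nabla_F^2 G_{\pm}$, whereas you quote the classical gradient characterization of convexity directly (and additionally record the integrability check, which is a harmless refinement).
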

\begin{proof}
By the mean value theorem and the fact that $G_{+}$ is convex, one has
$$
\begin{aligned}
\mathcal{G}_{+}(\phi) - &\mathcal{G}_{+}(\psi) - \int_{\Omega} G_{+}(\nabla \phi) \cdot \nabla (\phi-\psi)\, dx \\
&= \int_{\Omega} \bigg[ G_{+}(\nabla \phi)- G_{+}(\nabla \psi) -  \nabla_F G_{+}(\nabla \phi) \cdot \nabla (\phi-\psi) \bigg]\, dx \\
&= \int_{\Omega} \bigg[ \nabla_F G_{+}(\nabla \phi-s_1\nabla (\phi-\psi)) -  \nabla_F G_{+}(\nabla \phi)  \bigg]\cdot \nabla(\phi - \psi)\, dx \\
&= \int_{\Omega} \bigg[ \nabla_F^2 G_{+}(\nabla \phi-s_2\nabla (\phi-\psi)) (-s_1\nabla(\phi-\psi)) \bigg]\cdot \nabla(\phi - \psi)\, dx \\
&\le 0,
\end{aligned}
$$
where $0\le s_1\le s_2\le 1$ are constants determined by the mean value theorem.
The proof for $G_{-}$ is similar.
\end{proof}
\smallskip

Define the discrete energy at each time step $i=0,\cdots, M$ by
$$
E^i = E(h_N^i) = \frac{\gamma}{2}\|\nabla h_N^i\|^2 + \frac{\epsilon^2}{2}\|\Delta h_N^i\|^2 + \int_{\Omega} G(\nabla h_N^i)\, dx.
$$
By Lemma \ref{lem:energyLowerBound}, we know that $E^i \ge -C$ where $C>0$ is a constant independent of $h_N^i$.
More over, we have the following energy stability:
\smallskip
\begin{lemma}
The scheme (\ref{eq:scheme}) is unconditionally energy stable in the sense of
$$
E^i \le E^{i-1} + \frac{\delta t}{2} \|\zeta^i\|^2 - \frac{\|h_N^i-h_N^{i-1}\|^2}{2\delta t} - \frac{\gamma}{2}\|\nabla(h_N^i-h_N^{i-1})\|^2
 - \frac{\epsilon^2}{2}\|\Delta(h_N^i-h_N^{i-1})\|^2,
$$
for all $i=1,\cdots, M$. Note that when $\zeta^i\equiv 0$, one has $E^i\le E^{i-1}$.
\end{lemma}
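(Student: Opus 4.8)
The plan is to test the scheme \eqref{eq:scheme} with $\phi = h_N^i - h_N^{i-1}$, the natural choice for energy estimates, and then assemble the four quadratic terms together with the convexity/concavity inequalities of Lemma \ref{lem:convexconcave}. First I would rewrite the left-hand side: $\langle h_N^i - h_N^{i-1}, h_N^i - h_N^{i-1}\rangle/\delta t = \|h_N^i - h_N^{i-1}\|^2/\delta t$, and for the elliptic part of $a_+$ use the polarization identity $\langle \nabla h_N^i, \nabla(h_N^i - h_N^{i-1})\rangle = \frac{1}{2}\|\nabla h_N^i\|^2 - \frac{1}{2}\|\nabla h_N^{i-1}\|^2 + \frac{1}{2}\|\nabla(h_N^i-h_N^{i-1})\|^2$, and similarly for the $\Delta$-term. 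This already produces the $-\frac{\gamma}{2}\|\nabla(h_N^i-h_N^{i-1})\|^2$ and $-\frac{\epsilon^2}{2}\|\Delta(h_N^i-h_N^{i-1})\|^2$ dissipation terms on the right, after moving the difference terms over.

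Next I would handle the two $G$-terms. The remaining contribution from $a_+(h_N^i,\phi)$ is $\langle \nabla_F G_+(\nabla h_N^i), \nabla(h_N^i-h_N^{i-1})\rangle$, and from $a_-(h_N^{i-1},\phi)$ it is $\langle \nabla_F G_-(\nabla h_N^{i-1}), \nabla(h_N^i-h_N^{i-1})\rangle$ (appearing with a plus sign once moved to the left). Applying the first inequality of Lemma \ref{lem:convexconcave} with $\phi = h_N^i$, $\psi = h_N^{i-1}$ gives $\mathcal{G}_+(h_N^i) - \mathcal{G}_+(h_N^{i-1}) \le \langle \nabla_F G_+(\nabla h_N^i), \nabla(h_N^i - h_N^{i-1})\rangle$, and the second inequality with the same $\phi,\psi$ gives $\mathcal{G}_-(h_N^i) - \mathcal{G}_-(h_N^{i-1}) \le \langle \nabla_F G_-(\nabla h_N^{i-1}), \nabla(h_N^i-h_N^{i-1})\rangle$. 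Adding these two, and recalling $G = G_+ + G_-$ so that $\mathcal{G}_+(\phi) + \mathcal{G}_-(\phi) = \int_\Omega G(\nabla\phi)\,dx$, bounds the combined nonlinear term below by $\int_\Omega G(\nabla h_N^i)\,dx - \int_\Omega G(\nabla h_N^{i-1})\,dx$, which is exactly the increment of the $G$-part of the energy.

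Finally I would estimate the forcing term $\langle \zeta^i, h_N^i - h_N^{i-1}\rangle \le \|\zeta^i\|\,\|h_N^i-h_N^{i-1}\| \le \frac{\delta t}{2}\|\zeta^i\|^2 + \frac{1}{2\delta t}\|h_N^i-h_N^{i-1}\|^2$ by Cauchy--Schwarz and Young; the $\frac{1}{2\delta t}\|h_N^i-h_N^{i-1}\|^2$ here cancels half of the $\frac{1}{\delta t}\|h_N^i-h_N^{i-1}\|^2$ coming from the time-difference term, leaving $-\frac{1}{2\delta t}\|h_N^i-h_N^{i-1}\|^2$ on the right. Collecting all pieces and recognizing $E^i - E^{i-1}$ on the left yields the claimed inequality. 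There is no real obstacle here: the only point requiring care is bookkeeping of signs when moving the $a_-$ term and the time-difference quadratic across the equality, and making sure the directions of the two inequalities in Lemma \ref{lem:convexconcave} (convex part evaluated at the new step, concave part at the old step) match the way $a_+$ and $a_-$ are discretized in \eqref{eq:scheme} — which is precisely why the splitting was set up that way.
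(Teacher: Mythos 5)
Your proposal is correct and follows exactly the paper's argument: test with $\phi = h_N^i - h_N^{i-1}$, apply the identity $a(a-b)=\frac12(a^2-b^2+(a-b)^2)$ to the quadratic terms, invoke Lemma \ref{lem:convexconcave} for the convex/concave $G$-parts, and finish with Young's inequality on the forcing term. No differences worth noting.
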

\begin{proof}
Set $\phi = h_N^i-h_N^{i-1}$ in (\ref{eq:scheme}) and use Lemma \ref{lem:convexconcave} as well as the fact $a(a-b) = \frac{1}{2}(a^2-b^2+(a-b)^2)$, we have
$$
\begin{aligned}
\frac{\|h_N^i-h_N^{i-1}\|^2}{\delta t} + E^i-E^{i-1} + \frac{\gamma}{2}\|\nabla(h_N^i-h_N^{i-1})\|^2 + \frac{\epsilon^2}{2}\|\Delta(h_N^i-h_N^{i-1})\|^2
&\le \langle \zeta^i, h_N^i-h_N^{i-1}\rangle \\
\le \frac{\delta t}{2} \|\zeta^i\|^2 + \frac{\|h_N^i-h_N^{i-1}\|^2}{2 \delta t}.
\end{aligned}
$$
This completes the proof of the lemma.
\end{proof}
\smallskip
\begin{rmk}
The above energy stability indeed also implies $H^2$ stability. By lemmas \ref{lem:PerLaplacian} and \ref{lem:GkBounds2}, one has
$$
E^i \ge \frac{\epsilon^2}{2}\|\Delta h_N^i\|^2 - \beta \|\nabla h_N^i\|^2 - C_{\beta}|\Omega|
\ge \frac{\epsilon^2}{2}\|\Delta h_N^i\|^2  - C_P\beta \|\Delta h_N^i\|^2  - C_{\beta}|\Omega| ,
$$
where $\beta$ can be any positive constant and $C_P$ is a constant from the Poincar\'{e} inequality.
Choose $\beta$ such that $C_P\beta = \frac{\epsilon^2}{4}$, one has
$$
E^i+ C_{\beta}|\Omega| \ge \frac{\epsilon^2}{4}\|\Delta h_N^i\|^2 \ge C\frac{\epsilon^2}{4} \|h_N^i\|^2_{H^2(\Omega)}.
$$
Thus the scheme is also $H^2$ stable.
Note here we avoid using $\frac{\gamma}{2}\|\nabla h_N^i\|^2$ to control $\beta \|\nabla h_N^i\|^2$, because $\gamma$ can be $0$.
\end{rmk}

Finally, we study the error estimate of the numerical scheme. Denote the error at each time step $t_i$, for $i=0,\ldots,M$ by
$$
\underline{h}^i = h(\vm{x}, t_i) - h_N^i (\vm{x}),
$$
where $h$ is the weak solution to (\ref{eq:diffeq}) and $h_N^i$ is the numerical solution.
The proof of the following theorem is quite standard and we thus postpone it to Appendix \ref{appendix}.

\smallskip
\begin{theorem} \label{th3}(Error estimate).
Let $h_0\in H^2_{per}(\Omega)$ and $\zeta \in L^{2}(0,T; L^2(\Omega))$.
Assume the weak solution to (\ref{eq:diffeq}) satisfy $h_{tt}\in L^2(0, t_n; L^2(\Omega))$,
$h_t\in L^2(0, t_n; H^{m_1}(\Omega))$ with $m_1\ge 2$, and $h\in L^{\infty}(0, t_{n-1}; H^{m_2}(\Omega))$ with $m_2 \ge 2$.
Then
$$
\|\underline{h}^i\| \le Ce^{Ct_n} (\delta t + N^{-m_1} + N^{-(m_2-1)}).
$$
\end{theorem}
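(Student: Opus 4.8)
\emph{Proof plan.} This is the standard discrete energy estimate for a one-step semi-implicit scheme, combined with discrete Gr\"{o}nwall. I would first split the error as $\underline{h}^i = \rho^i + \theta^i$, where $\rho^i = h(\cdot,t_i) - \Pi_N h(\cdot,t_i)$ is a projection error (with $\Pi_N:\,H^2_{per}(\Omega)\to H_N$ either $P_N$ or the elliptic projection associated with the symmetric coercive bilinear form $a_+(\cdot,\cdot)$) and $\theta^i = \Pi_N h(\cdot,t_i) - h_N^i \in H_N$; Lemma \ref{lem:PNapproximation} controls $\rho^i$ in every Sobolev norm. The time regularity assumed ($h_{tt}\in L^2(0,t_n;L^2)$, etc.) makes $t\mapsto h(\cdot,t)$ regular enough for the weak formulation (\ref{eq:weakeq}) to hold at each node $t_i$; subtracting it from the scheme (\ref{eq:scheme}) — the $\zeta^i=\zeta(\cdot,t_i)$ terms cancel — and using $a=a_++a_-$ yields, for all $\phi\in H_N$,
\[
\langle \tfrac{\theta^i-\theta^{i-1}}{\delta t},\phi\rangle + a_+(\theta^i,\phi) = \langle \tau^i,\phi\rangle - \langle \tfrac{\rho^i-\rho^{i-1}}{\delta t},\phi\rangle - a_+(\rho^i,\phi) - \big[a_-(h(\cdot,t_i),\phi)-a_-(h_N^{i-1},\phi)\big],
\]
with $\tau^i = \tfrac{h(\cdot,t_i)-h(\cdot,t_{i-1})}{\delta t}-\partial_t h(\cdot,t_i)$ the time-truncation error; the term $a_+(\rho^i,\phi)$ vanishes if $\Pi_N$ is the elliptic projection.

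Next I would test with $\phi=\theta^i$. The left side produces $\tfrac1{2\delta t}(\|\theta^i\|^2-\|\theta^{i-1}\|^2) + \tfrac1{2\delta t}\|\theta^i-\theta^{i-1}\|^2 + a_+(\theta^i,\theta^i)$, and since $G_+(\vm{m})=\tfrac12\chi|\vm{m}|^2$ makes $a_+$ the bilinear form $(\gamma+\chi)\langle\nabla\cdot,\nabla\cdot\rangle + \epsilon^2\langle\Delta\cdot,\Delta\cdot\rangle$, Lemma \ref{lem:PerLaplacian} (with $\theta^i$ mean value free) gives $a_+(\theta^i,\theta^i)\ge c_0\|\theta^i\|_{H^2(\Omega)}^2$ for some $c_0>0$. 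On the right I would use: $|\langle\tau^i,\theta^i\rangle|\le\|\tau^i\|\,\|\theta^i\|$ with $\|\tau^i\|^2\le\delta t\int_{t_{i-1}}^{t_i}\|h_{tt}\|^2\,d\tau$ (Taylor with integral remainder); $|\langle\tfrac{\rho^i-\rho^{i-1}}{\delta t},\theta^i\rangle|\le CN^{-m_1}\big(\tfrac1{\delta t}\int_{t_{i-1}}^{t_i}\|\partial_t h\|_{H^{m_1}}\,d\tau\big)\|\theta^i\|$ since $\rho^i-\rho^{i-1}=(I-\Pi_N)\int_{t_{i-1}}^{t_i}\partial_t h\,d\tau$ and Lemma \ref{lem:PNapproximation}; the $a_+(\rho^i,\theta^i)$ term (zero for the elliptic projection, otherwise bounded by $CN^{-(m_2-1)}\|h\|_{H^{m_2}}\|\theta^i\|_{H^2}$ after an integration by parts); and, for the explicit concave term, the global Lipschitz bound on $\nabla_F G_-$ coming from Corollary \ref{cor:GkBounds} ($|\nabla_F^2 G_-(\vm{m})|\le C$):
\[
\big|a_-(h(\cdot,t_i),\theta^i)-a_-(h_N^{i-1},\theta^i)\big| \le C\,\|\nabla h(\cdot,t_i)-\nabla h_N^{i-1}\|\,\|\nabla\theta^i\| \le C\big(\textstyle\int_{t_{i-1}}^{t_i}\|\nabla\partial_t h\|\,d\tau + \|\nabla\rho^{i-1}\| + \|\nabla\theta^{i-1}\|\big)\|\nabla\theta^i\|,
\]
where $\|\nabla\rho^{i-1}\|\le CN^{-(m_2-1)}\|h\|_{L^\infty(0,t_{n-1};H^{m_2})}$.

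Then I would apply Young's inequality to every product, absorbing all current-index $\|\theta^i\|_{H^2}$ contributions into $c_0\|\theta^i\|^2_{H^2}$. The delicate point is the coupling $\|\nabla\theta^{i-1}\|\,\|\nabla\theta^i\|$ produced by the explicit concave part: I would bound $\|\nabla\theta^{i-1}\|^2\le\|\theta^{i-1}\|\,\|\Delta\theta^{i-1}\|\le\mu\|\Delta\theta^{i-1}\|^2 + C_\mu\|\theta^{i-1}\|^2$, multiply the whole inequality by $2\delta t$, and sum over $i=1,\dots,n$; after reindexing, the $\mu\|\Delta\theta^{i-1}\|^2$ terms run over $i=0,\dots,n-1$, the $i=0$ one being zero (or $O(N^{-2})$) because $\theta^0=\Pi_N h_0-h_N^0$, and for $\mu$ small they are swallowed by the dissipation $2\delta t\sum_i c_0\|\theta^i\|^2_{H^2}$. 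Using Cauchy--Schwarz to replace each $\delta t\sum_i(\int_{t_{i-1}}^{t_i}f\,d\tau)^2$ by $\delta t^2\|f\|^2_{L^2}$, what remains is
\[
\|\theta^n\|^2 \le \|\theta^0\|^2 + C\,\delta t\sum_{i=1}^n\|\theta^i\|^2 + C\big(\delta t^2 + N^{-2m_1} + N^{-2(m_2-1)}\big),
\]
with the constant absorbing $\|h_{tt}\|^2_{L^2(0,t_n;L^2)}$, $\|\partial_t h\|^2_{L^2(0,t_n;H^{m_1})}$ and $t_n\|h\|^2_{L^\infty(0,t_{n-1};H^{m_2})}$. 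The discrete Gr\"{o}nwall inequality then gives $\|\theta^n\|\le Ce^{Ct_n}(\delta t + N^{-m_1} + N^{-(m_2-1)})$, and the triangle inequality $\|\underline{h}^n\|\le\|\rho^n\|+\|\theta^n\|$ with $\|\rho^n\|\le CN^{-m_2}\|h\|_{L^\infty(H^{m_2})}$ yields the stated estimate.

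I expect the main obstacle to be precisely the absorption just described: because $a_-$ is treated explicitly, the error identity generates $\|\nabla\theta^{i-1}\|$ instead of $\|\nabla\theta^i\|$, and getting an \emph{unconditional} bound (no constraint linking $\delta t$ and $N$) forces one to trade this gradient against the $\epsilon^2\|\Delta\theta^{i-1}\|^2$ dissipation from the previous step, which works only thanks to the uniform bound on $\nabla_F^2 G_-$ from Corollary \ref{cor:GkBounds}. Everything else — the Taylor remainder for $\tau^i$, the projection estimates for $\rho^i$ via Lemma \ref{lem:PNapproximation}, and the discrete Gr\"{o}nwall step — is routine.
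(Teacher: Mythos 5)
Your proposal is correct and follows essentially the same route as the paper's Appendix~\ref{appendix}: the elliptic ($a_+$-) projection splitting, testing the error equation with the discrete error, a uniform Lipschitz bound on $\nabla_F G_-$ from Corollary~\ref{cor:GkBounds} for the explicitly-treated concave term, absorption of the resulting $\|\nabla e^{i-1}\|^2$ into the previous step's $\epsilon^2\|\Delta e^{i-1}\|^2$ dissipation via $\|\nabla\phi\|^2\le\|\phi\|\,\|\Delta\phi\|$, and discrete Gr\"{o}nwall. You also correctly identified the one genuinely delicate point (the backward-shifted gradient coupling), which is exactly what the paper handles in its estimate (\ref{eq:I3-1}).
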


\begin{rmk}
Although we used the Fourier spectral Galerkin method in the spatial discretization,
similar results hold for other Galerkin spatial discretizations.
\end{rmk}

%

\section{Numerical Results}\label{Snumer}
An important feature of the semi-implicit fully discrete scheme (\ref{eq:scheme}) is that,
it has a linear implicit part and hence one only needs to solve a linear problem in every time step.
This is a great advantage comparing to the numerical schemes for models using $\vm{J}_{F,ES}$ or $\vm{J}_{FSS,ES}$
as the ES current, which inevitably require a nonlinear implicit part for stability purpose \cite{LL03, CW}.
However, one may wonder whether the new model, although easier to compute, can still correctly capture the evolution of surface morphology or not.
In this section, we will first answer this question by comparing the numerical results from the new model with numerical results
from other models reported in \cite{LL03}. We will also test the new model on a larger set of examples to examine its performance.

Set $\Omega=(0,2\pi)^2$ and choose the Fourier spectral Galerkin approximation as the spatial discretization
in Scheme (\ref{eq:scheme}). We pick this spatial discretization because it can be easily and efficiently implemented in Matlab using
the build-in Fast Fourier Transform (FFT) tool. In all numerical experiments, set the size of spatial discretization to be $N^2$ with $N=128$, i.e.,
in the physical space $h$ is evaluated on a $128\times 128$ grid and consequently in the frequency space $\hat{h}$ is
approximated by $128\times 128$ Fourier modes, where $\hat{h}$ denoted the discrete Fourier transform of $h$.
For simplicity, assume $\zeta=0$.

Although $\vm{J}_{3,ES}$ is not continuous at $\nabla h = \vm{0}$, we have shown in Remark \ref{rem:J3}
that it still possesses several nice properties including the most important convex-concave splitting property.
Thus we are also interested in testing $k=3$ numerically and comparing it with $k=1,2$.
To distinguish between different ES currents $\vm{J}_{1,ES}$, $\vm{J}_{2,ES}$ and $\vm{J}_{3,ES}$,
here again we shall adopt the subscript $k=1, 2, 3$ throughout the rest of this section.
In the implementation, one has to deal with the calculation of $\vm{J}_{3,ES}$ when $|\nabla h| = 0$.
Here we adopt a makeshift solution by setting $|\nabla h| = \max\{|\nabla h|,\, 10^{-16}\}$.

Next we shall consider proper choice of the parameters, such as $\alpha$, $p$, $q$, $\gamma$ and $\epsilon$, in Equation (\ref{eq:diffeq}).
To make a realistic choice, let us first recall how the model was built from physical laws.
According to \cite{Evans}, set
$$
\begin{aligned}
\alpha_1&=\frac{F_fL_{ES}}{2}, \quad &\alpha_2&=\alpha_1\frac{q-p}{p},\quad &\alpha_3&=\alpha_1(q-p),
\quad &p&=\frac{b}{L_{isl}}, \quad &q&=\frac{b}{L_{isl}}+\frac{b}{L_{ES}},\\
\gamma&=C_{DF}F_f,\quad & \epsilon^2&=F_f(L_{isl})^4,
\end{aligned}
$$
where $F_f$ is the deposition flux per unit time, $L_{ES}$ is the adatom attachment length when descending a step (ES effect),
$L_{isl}$ is the typical island separation length, $b$ is the typical step height,
and $C_{DF}$ is the strength of the downward funneling current.
We start from setting  $L_{isl}=0.25$, $L_{ES}=0.05$, $b=0.017$, $C_{DF}=0$ and $F_{f}=2$, which gives
$$
\alpha_1 = 0.05,\quad \alpha_2 = 0.25,\quad \alpha_3 = 0.017,\quad p = 0.068,\quad q = 0.408, \quad \gamma = 0,\quad \epsilon^2 = 0.0078.
$$
Later we shall perturb parameters $\gamma$ and $\epsilon^2$ a little bit to investigate their effect on the surface evolution.
Note that Equation (\ref{eq:diffeq}) is linear in terms of $t$, therefore scaling $\alpha$, $\gamma$ and $\epsilon^2$ together
is equivalent to changing the time scale. Thus we do not plan to test the numerical scheme for different values of $\alpha$.
Also, because of the small value of $\alpha$ we currently pick, the surface evolution with respect to time appear to be relatively slow.
Hence we have found that setting the time step size $\delta t= 0.01$ is adequate to resolve the rich details of the evolution.
Though we point out that one may choose any other time step size and the numerical scheme will always be stable as proved in Section \ref{Sprop}.
However, $\delta t$ should be small enough in order to attain certain accuracy.
One may also consider adaptive time-stepping strategies such as the one proposed in \cite{QZ11}.

\subsection{Example 1}
We start from the initial condition used in \cite{LL03}:
$$
h_0 = 0.1(\sin 3x\sin 2y + \sin 5x \sin 5y).
$$
Surface evolution with this initial condition using other models have been numerically studied in details in \cite{LL03}.
Interestingly, our numerical results show that the new model produces highly similar evolution patterns as those reported in  \cite{LL03},
despite the different ES current $\vm{J}_{ES}$ used in these models. Note all these models are constructed based on the same physical phenomena,
thus the numerical similarity indicates that they have each individually models the microscopic movement relatively correct.

\begin{figure}[ht]
\begin{center}
\includegraphics[width=10cm]{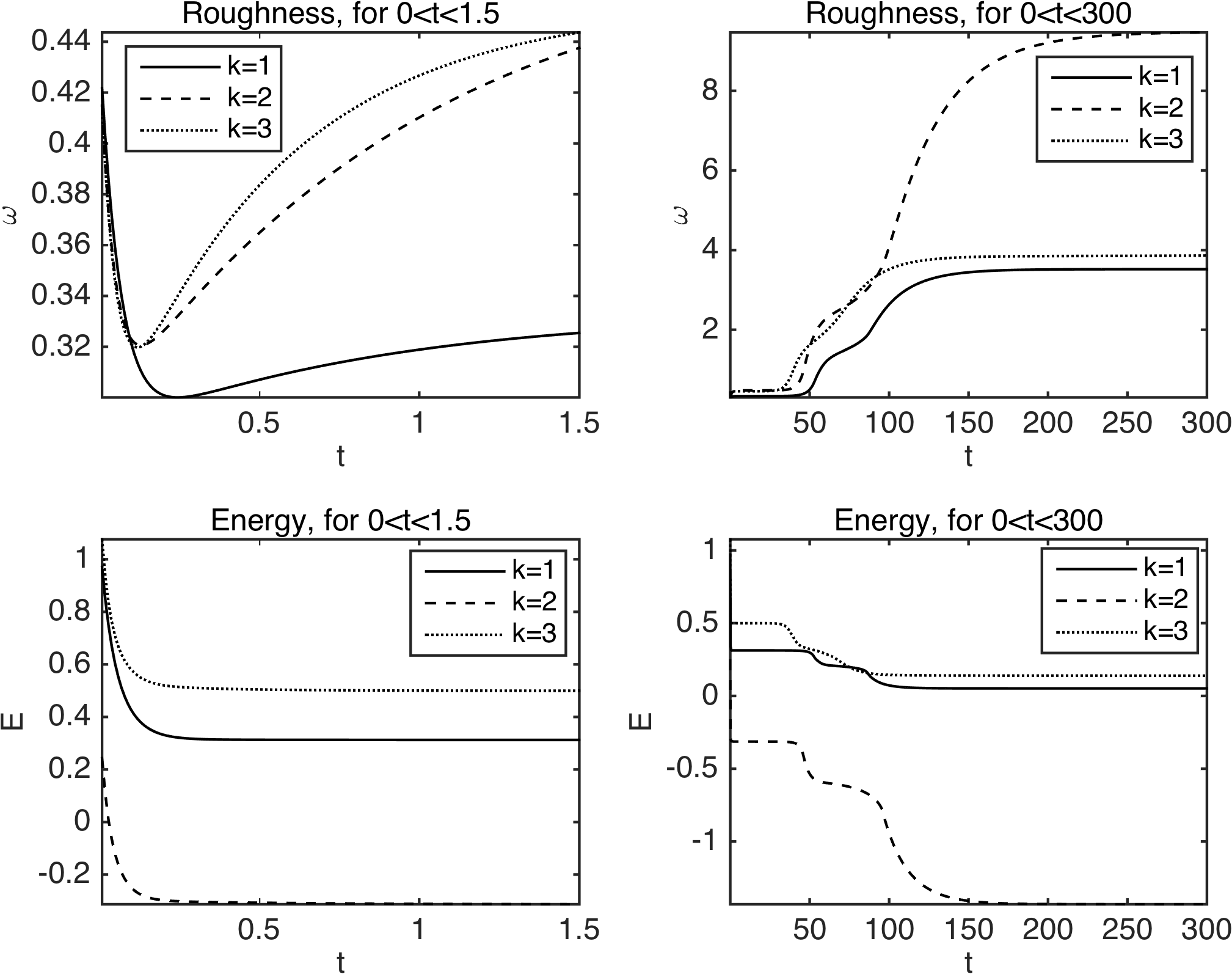}
\caption{Evolution of roughness $\omega$ and energy $E$ for Example 1. Left: early evolution; Right: entire evolution.} \label{fig:E1-roughnessenerygy}
\end{center}
\end{figure}

\begin{figure}[ht]
\begin{center}
\includegraphics[width=14cm]{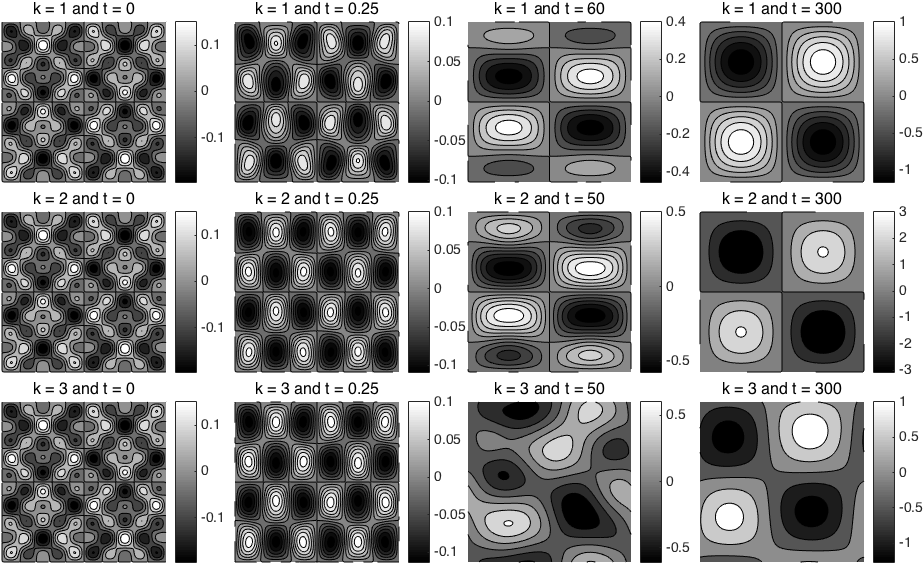}
\caption{Evolution of surface pattern for Example 1.}  \label{fig:E1-surfacek1}
\end{center}
\end{figure}

\begin{figure}[ht]
\begin{center}
\includegraphics[width=10cm]{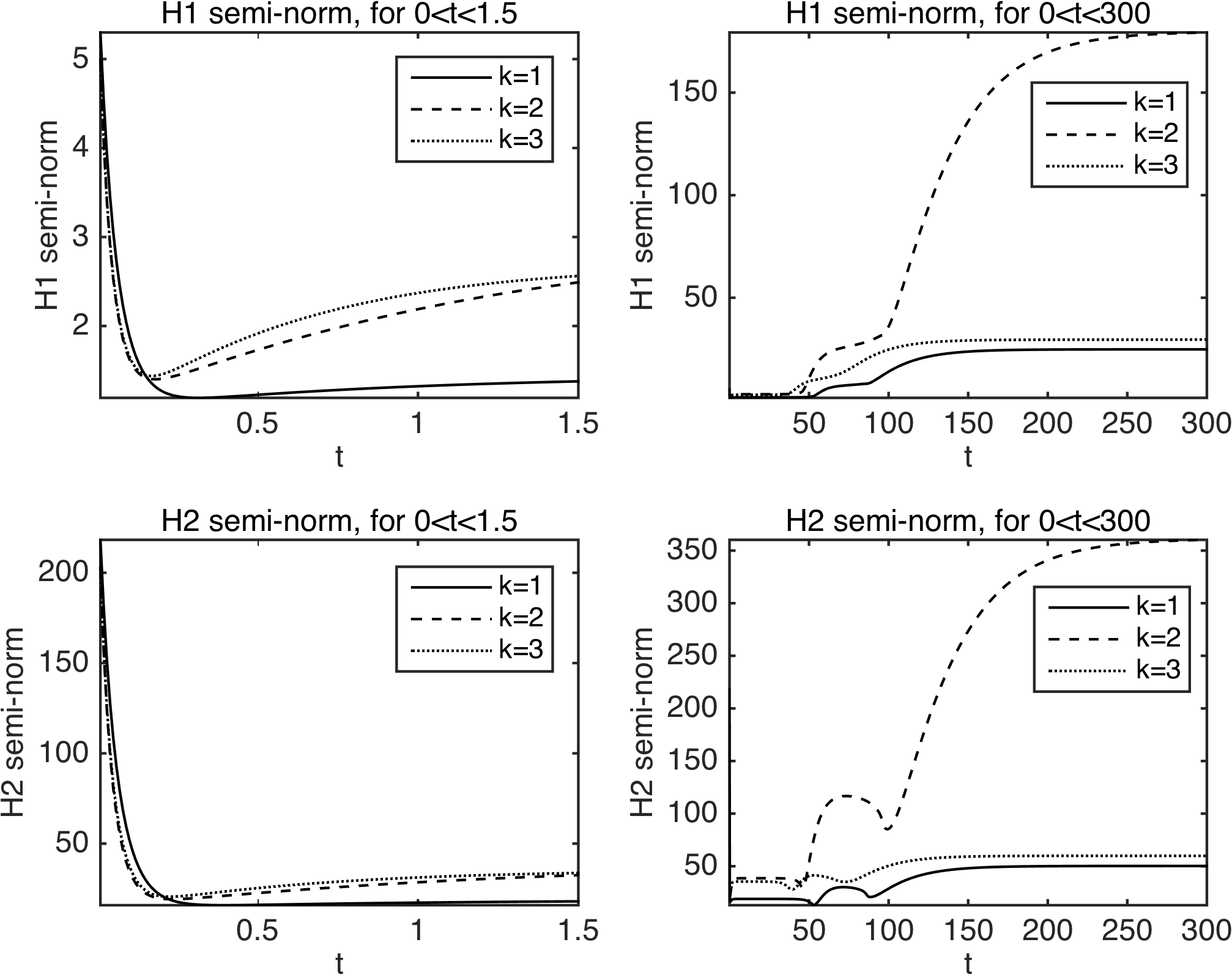}
\caption{Evolution of $\|\nabla h\|^2$ and $\|\Delta h\|^2$ for Example 1. Left: early evolution; Right: entire evolution.} \label{fig:E1-H1H2}
\end{center}
\end{figure}

The evolution of surface roughness $\omega$ and energy $E$ is reported in Figure \ref{fig:E1-roughnessenerygy}.
It seems that a steady state solution has been reached at the end, as the roughness and energy curves appear to be flat.
We shall point out that although the graph only shows evolution for $0\le t\le 300$,
the actual computation is done for a much longer time period, in order to ensure that the roughness and energy curves stay flat at the end.
The same holds for all numerical results reported in this section.
Contour plots of the solutions at different time steps in Figure \ref{fig:E1-surfacek1} suggest
that the case $k=3$ suffers from numerical round-off errors probably introduced by the crude treatment of $|\nabla h|=0$.

In all cases, notice that the energy in Figure \ref{fig:E1-roughnessenerygy} is non-increasing, i.e., the scheme is energy stable. Comparing with
reports in \cite{LL03}, they also share the following similarities:
\begin{enumerate}
\item The roughness $\omega$ drops in the beginning and then starts to increase, which has been described as a ``rough-smooth-rough" pattern in \cite{LL03}.
\item The evolution goes through several ``flat" stages, with each ``flat" stage corresponding to a relatively stable surface pattern in the coarsening process.
      Similar ``flat" stages have been reported in \cite{LL03}.
\item We draw the surface image at different time, and report them in Figure \ref{fig:E1-surfacek1}. Note that
    for either $k=1$ or $k=2$, the coarsening process evolves through three very different patterns, including the final steady state solution.
    We point out that these three patterns have exactly the same structure as the stages reported in \cite{LL03}.
    Also, there is no structural difference between the surface evolution for $k=1$ and $k=2$.
    The case $k=3$ is slightly different than $k=1,2$ as the solution obviously is smeared by artificial round-off error in the middle of the evolution,
    which we suspect is introduced through the crude treatment of $|\nabla h|=0$.
    Recall that the theoretical well-posedness and error estimate do not work for $k=3$.
    However, it is interesting to see that that numerical scheme for $k=3$ remains energy stable, and
    its solution eventually converges to a steady state of the same pattern as the solutions for $k=1,2$,
    with a small phase shift.
    In examples to be given later, we will see that there exist multiple types of steady state solutions for our model problem.
    But for a given initial condition, the behavior of solutions for $k=1,2,3$ seems to be similar and all three converge to the same type of steady state solution.
\item We also point out that our numerical results have a longer evolution time length comparing to results in  \cite{LL03}.
    This is because we have picked small values for $\alpha$ and $\epsilon^2$, which result in a slower coarsening process.
\end{enumerate}

The squares of semi-norms, i.e. $\|\nabla h\|^2$ and $\|\Delta h\|^2$, of the solution for Example 1 are reported in Figure \ref{fig:E1-H1H2}.
The reason why we report squares of semi-norms instead of the semi-norms is that the squares instead of the semi-norms are components of $E(h)$.
One immediately notice that
 $\|\nabla h\|^2$ grows in almost the same pattern as $\omega$, while $\|\Delta h\|^2$ appears to have sudden drops at the transition between ``flat" stages.

\begin{figure}[ht]
\begin{center}
\includegraphics[width=10cm]{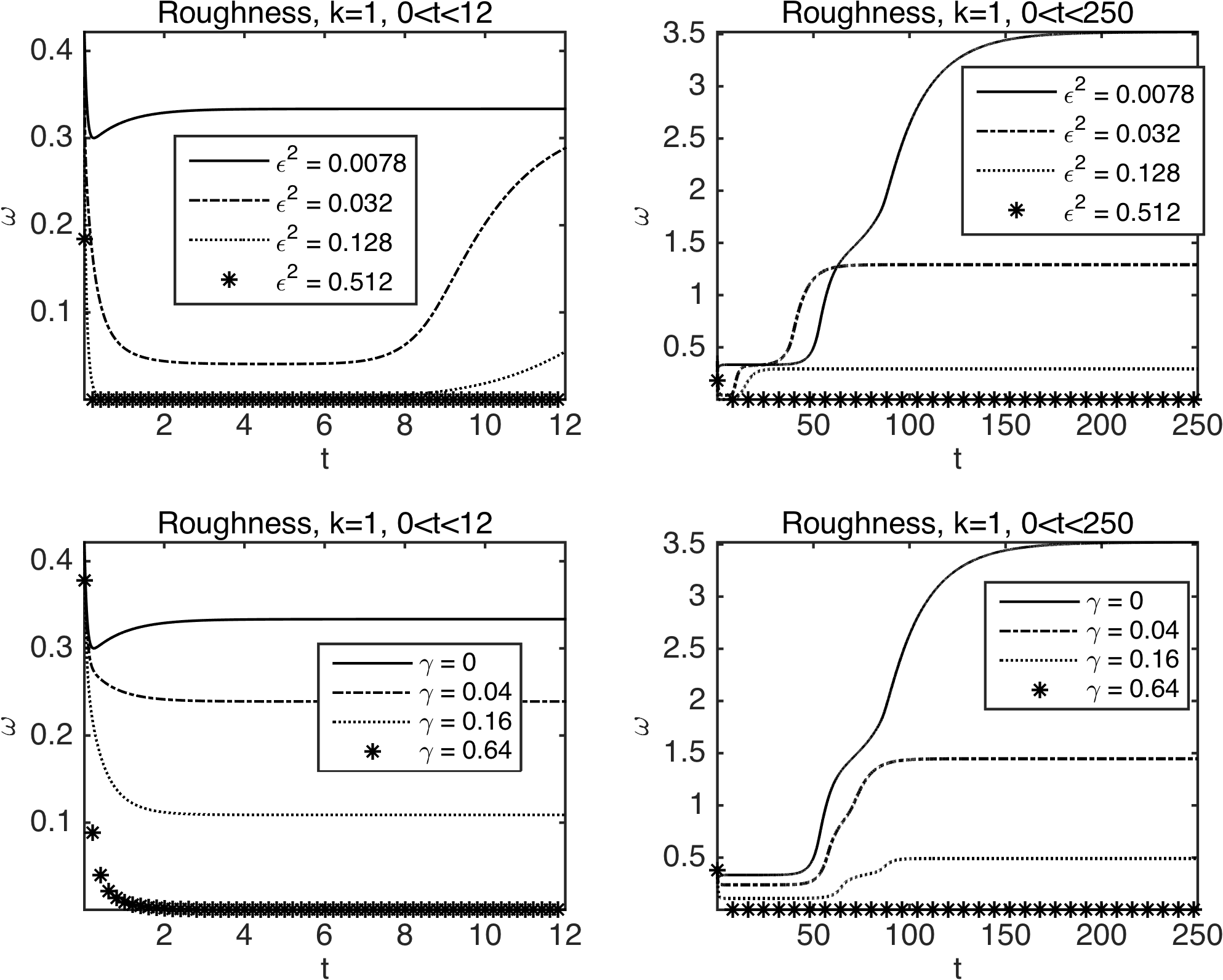}
\caption{Effect of small perturbations of $\epsilon^2$  and $\gamma$ on roughness for Example 1, with $k=1$.} \label{fig:E1-perturb}
\end{center}
\end{figure}

\begin{figure}[ht]
\begin{center}\includegraphics[width=11.5cm]{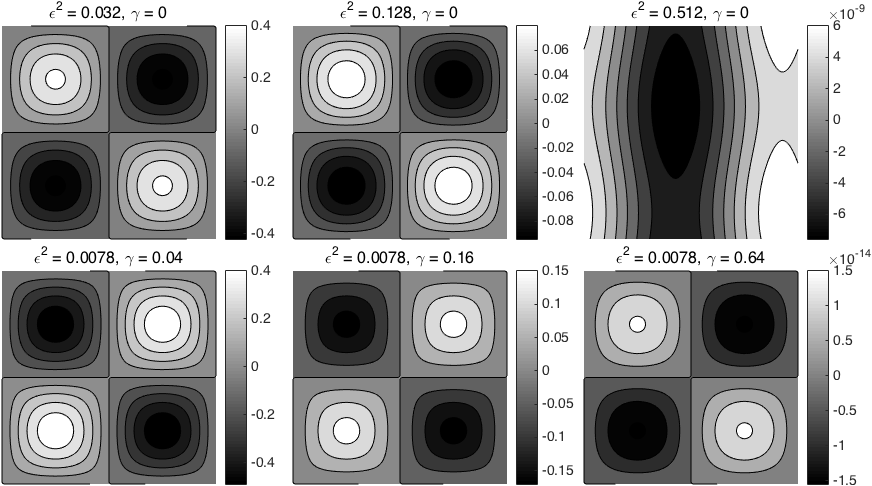}
\caption{Effect of small perturbations of $\epsilon^2$  and $\gamma$ on surface patterns for Example 1, with $k=1$.
All solutions are drawn at time $t=250$.} \label{fig:E1-perturb-surface}
\end{center}
\end{figure}

We also performed perturbation tests on parameters $\gamma$ and $\epsilon^2$.
Only the perturbation for $k=1$ is reported here since the test results are similar for $k=2$ and $k=3$.
The results are given in Figures \ref{fig:E1-perturb} and \ref{fig:E1-perturb-surface},
from which we draw the following conclusions:
\begin{enumerate}
\item Changing $\epsilon^2$ affects both the magnitude of $\omega$ and the time to reach the next ``flat" stage.
A smaller $\epsilon^2$ gives larger surface roughness at steady state, while slows down the evolution. This is reasonable as
$\epsilon^2 \Delta^2 h$ is the highest-order leading dissipation term in Equation (\ref{eq:diffeq}).
When $\epsilon^2$ is set to $0.512$, the fourth-order dissipation term dominates the nonlinear ES effect,
and the surface evolution behaves like a normal fourth-order dissipation, i.e., the roughness quickly drops to $0$ and stays there, as shown in
Figure \ref{fig:E1-perturb}. Besides, from Figure \ref{fig:E1-perturb-surface} one can see that the magnitude of the steady state solution $h$
gradually drops to $0$ as $\epsilon^2$ increases.
\item Changing $\gamma$  affects the magnitude of $\omega$ and slightly affects the pace of the evolution.
Again, this is reasonable as $-\gamma\Delta h$ is the secondary dissipation term in Equation (\ref{eq:diffeq}).
Recall that in subsection \ref{sec:energyfunctional}, we have drawn the conclusion that
when $\gamma$ is large enough, the energy functional $E$ is a convex functional of $\nabla h$ and hence has a unique global minimizer $h=0$.
This has been observed when we increase $\gamma$ to $0.64$ in the perturbation test. In this case, the roughness drops quickly
towards $0$, which indicates that the stabilizing downward funneling current is dominant and the thin film growth
becomes a simple dissipative process. Surface patterns of the steady state solution given in Figure \ref{fig:E1-perturb-surface} also show that
$h$ is nearly $0$ for $\gamma = 0.64$. Again, note that the magnitude of the steady state solution $h$
gradually drops to $0$ as $\gamma$ increases.
\item Another noticeable fact is that, the sign of the steady state solution can get reverted when perturbing $\epsilon^2$,
as shown by comparing the first row of Figure \ref{fig:E1-perturb-surface}  with the steady state solution for $\epsilon^2 = 0.0078$ in Figure \ref{fig:E1-surfacek1}.
However, changing $\gamma$ seems to only affect the magnitude of the steady state solution.
\end{enumerate}

\subsection{Example 2}
In the second example, we pick an initial condition with high frequency:
$$
h_0 = 0.01*(\sin 30x \sin 20y + \sin 50x \sin 50y ).
$$
Several stages of the surface evolution are reported in Figures \ref{fig:E2-surfacek1}-\ref{fig:E2-surfacek3} for $k=1,2,3$.
All solutions converge to the same type of steady state solution that is different from the one for Examples 1,
which  indicates that the steady state solution is not unique.
Again, the case $k=3$ suffers from numerical round-off errors.
The evolution of roughness $\omega$ and energy $E$ is reported in Figure \ref{fig:E2-roughnessenerygy}.

\begin{figure}[ht]
\begin{center}
\includegraphics[width=11.5cm]{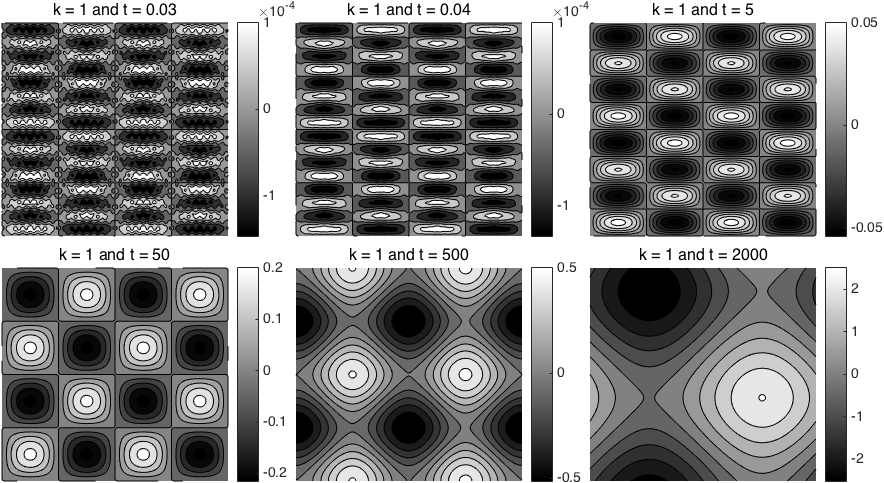}
\caption{Evolution of surface pattern for Example 2, with $k=1$.}  \label{fig:E2-surfacek1}
\end{center}
\end{figure}

\begin{figure}[ht]
\begin{center}
\includegraphics[width=11.5cm]{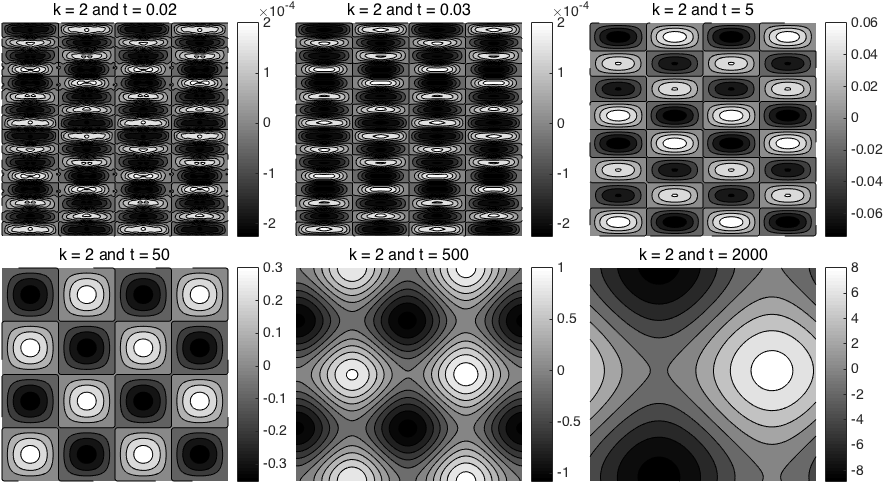}
\caption{Evolution of surface pattern for Example 2, with $k=2$.}  \label{fig:E2-surfacek2}
\end{center}
\end{figure}

\begin{figure}[ht]
\begin{center}
\includegraphics[width=11.5cm]{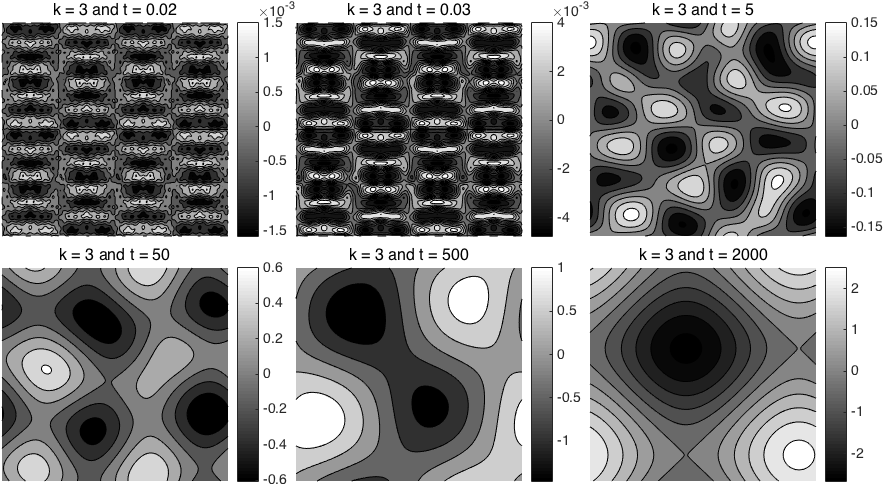}
\caption{Evolution of surface pattern for Example 2, with $k=3$.}  \label{fig:E2-surfacek3}
\end{center}
\end{figure}

 \begin{figure}[ht]
\begin{center}
\includegraphics[width=10cm]{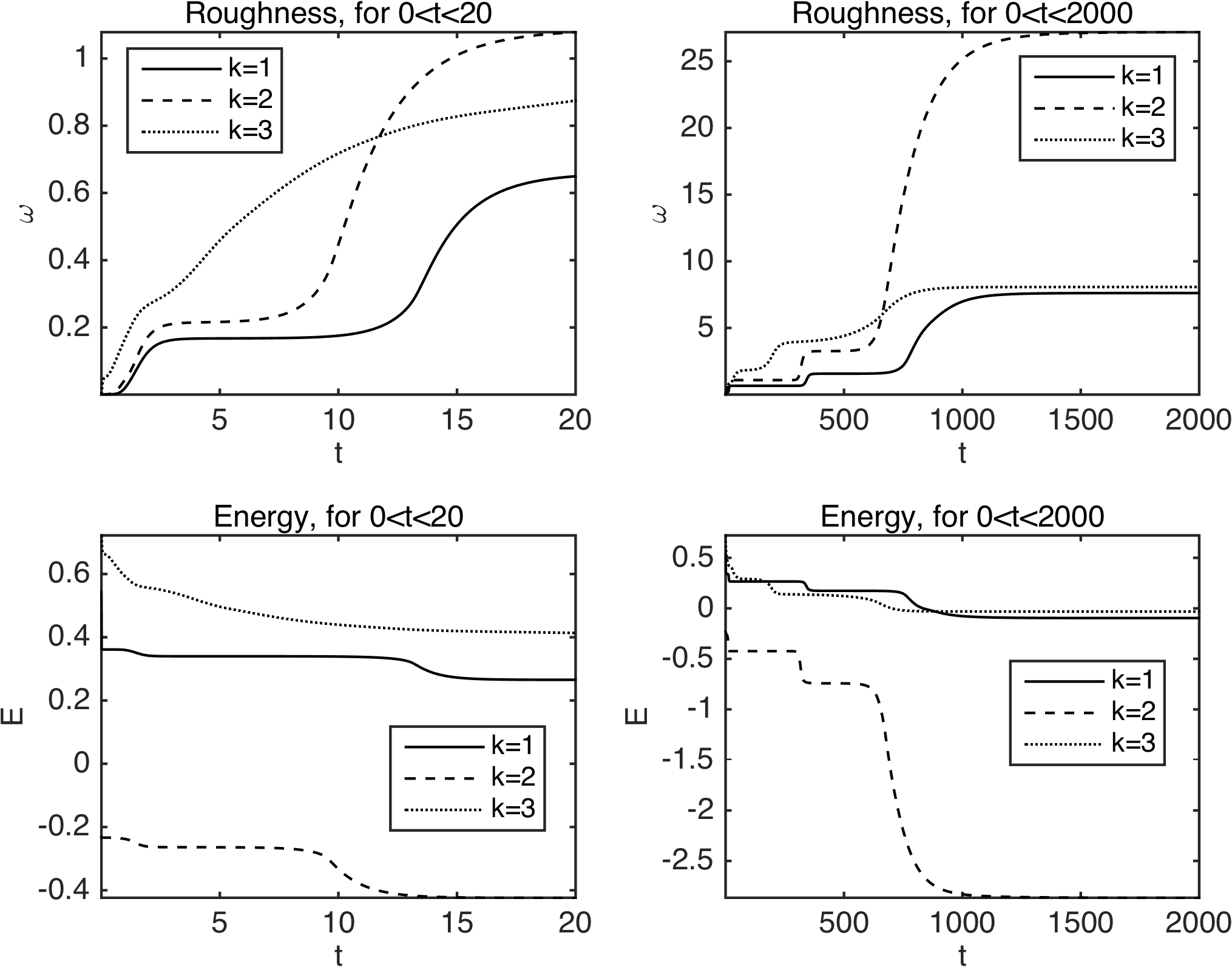}
\caption{Evolution of roughness $\omega$ and energy $E$ for Example 2. Left: early evolution; Right: entire evolution.} \label{fig:E2-roughnessenerygy}
\end{center}
\end{figure}

What is interesting about this example is the fast reduction of frequency (smoothing effect) in the beginning of the evolution.
The initial condition has a high wave number that almost reaches the largest resolution of a $128\times 128$ computational grid.
The surface plot in Figures \ref{fig:E2-surfacek1}-\ref{fig:E2-surfacek3} has to start from $t=0.02$ or $t=0.03$, because the surface plot at $t=0.01$
looks completely ``black" due to its high frequency components. Within a few time steps, all these high oscillation
parts are quickly smoothed out. This is obviously the effect of the fourth order dissipation term $\epsilon^2\Delta^2 h$.

After the initial smoothing process, the magnitude of the solution undergoes a dramatic increase. For example, in Figure \ref{fig:E2-surfacek1},
the magnitude of the solution increases from $10^{-4}$ at $t=0.04$ to $0.05$ at $t=5$ and eventually to $2$ at $t=2000$,
which indicates a typical island-forming or coarsening process. Moreover, the wave number of the solution keeps dropping as $t$ increases,
until it reaches the steady state solution.

Finally, we point out that although the case $k=3$ keeps suffering from round-off errors, it eventually converges to a same type of
steady-state solution as the one for $k=1$ or $k=2$, again with a phase shift.

\subsection{Example 3}
In the third example, we pick
$$
h_0 = \sin 2x \cos 3y.
$$
The purpose of this example is to test a relatively smooth initial data with large magnitude.

\begin{figure}[ht]
\begin{center}
\includegraphics[width=10cm]{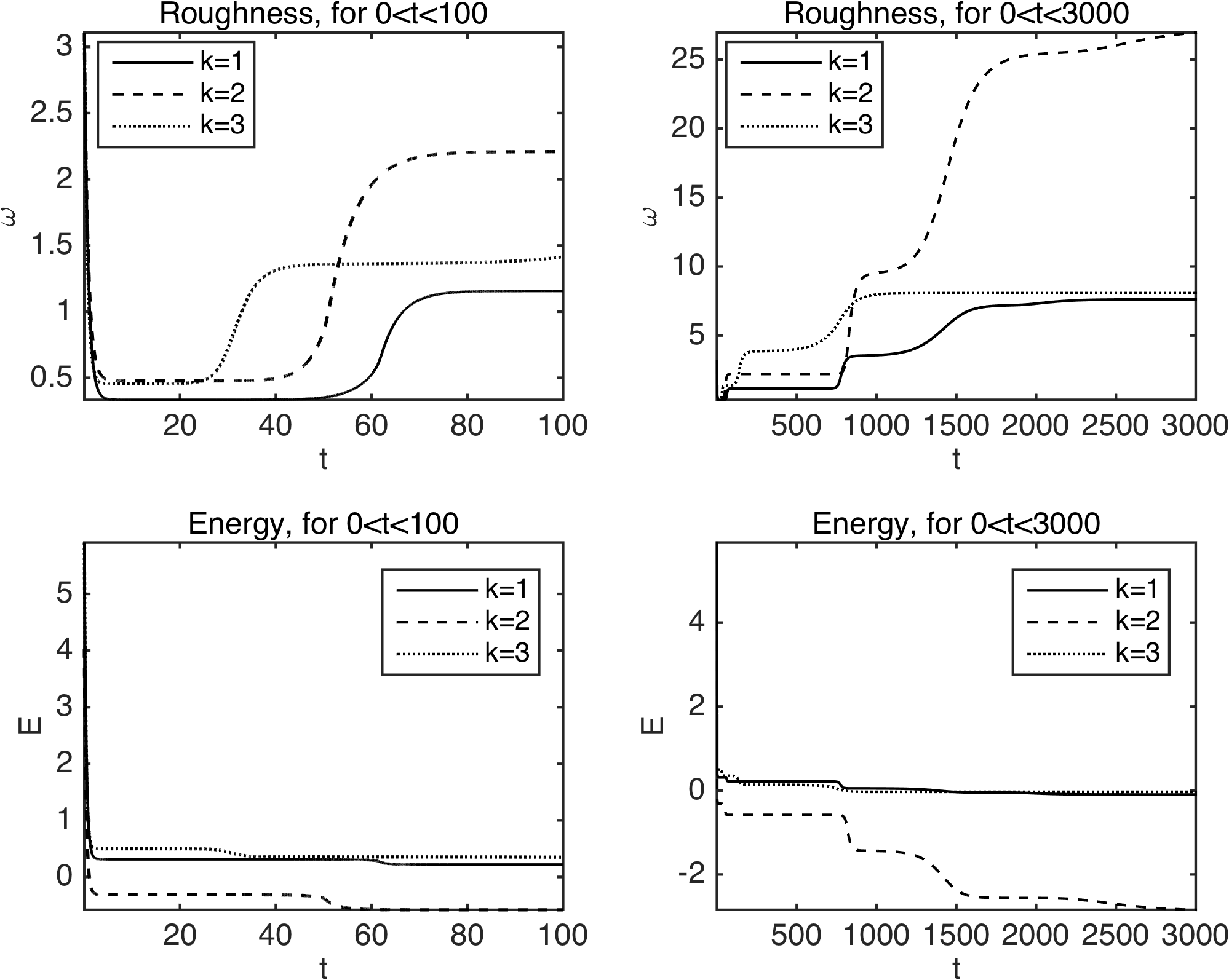}
\caption{Evolution of roughness $\omega$ and energy $E$ for Example 3.} \label{fig:E3-roughnessenerygy}
\end{center}
\end{figure}

\begin{figure}[ht]
\begin{center}
\includegraphics[width=14cm]{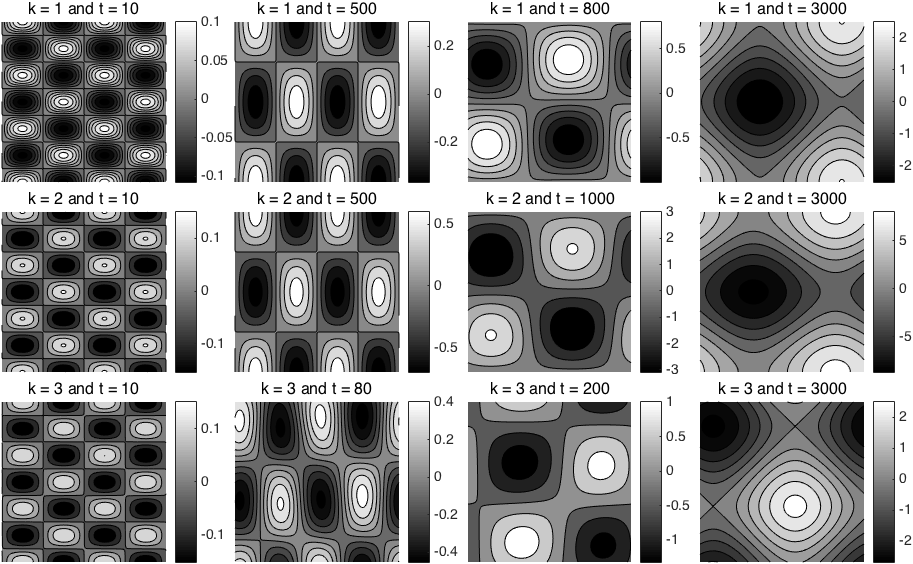}
\caption{Evolution of surface pattern for Example 3.}  \label{fig:E3-surfacek1}
\end{center}
\end{figure}

Results of Example 3 are reported in Figures \ref{fig:E3-roughnessenerygy} and \ref{fig:E3-surfacek1}.
Here we point out the sharp drop of both roughness and energy in the beginning of the evolution, as seen from Figure \ref{fig:E3-roughnessenerygy}.
Indeed, comparing to other examples, Example 3 start from a large $\omega$, which is probably the reason of the sharp drop.
This also agrees with the ``rough-smooth-rough" pattern analyzed in \cite{LL03}.

Again, in Example 3, we found that the solution for $k=1$ and $k=2$ converges to exactly the same steady-state pattern, with the magnitude for $k=2$
larger then for $k=1$; while the solution $k=3$ converges to a slightly different pattern with a phase shift but almost the same magnitude as for $k=1$.
This phenomenon has also been observed for examples 1 and 2.
By examining the roughness and the energy history, one can also see that towards the steady state solution, the roughness and energy curves
for $k=1$ and $k=3$ tend to stay close while the curves for $k=2$ are away from them.

\subsection{Example 4}
In the fourth example, we pick
$$
h_0 = 0.01( \sin 3x \sin 2y + \cos 50x \cos 100y).
$$
which is a combination of a low frequency part with a high frequency part.

\begin{figure}[ht]
\begin{center}
\includegraphics[width=10cm]{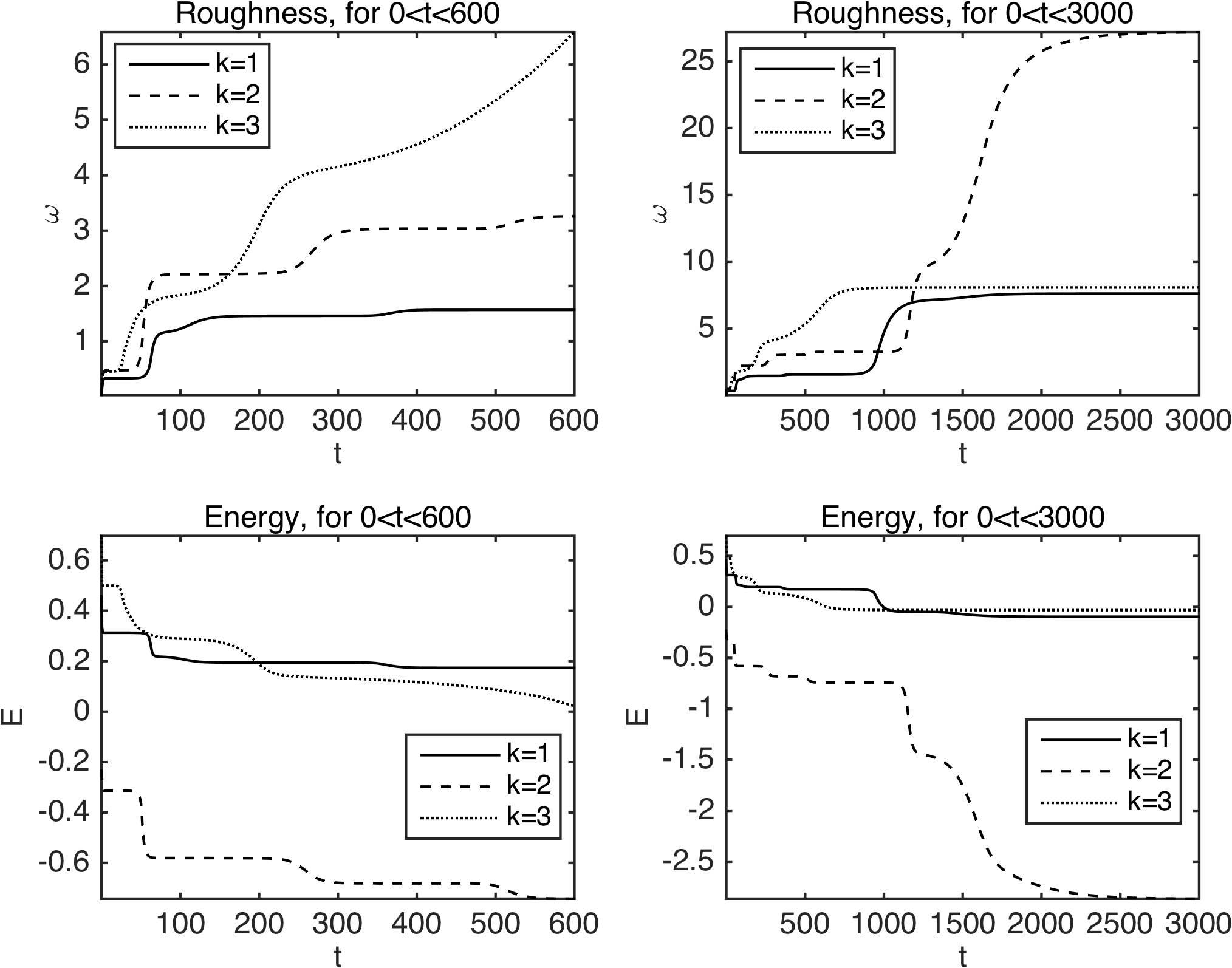}
\caption{Evolution of roughness and energy for Example 4. Left: early evolution; Right: entire evolution.} \label{fig:E4-roughnessenerygy}
\end{center}
\end{figure}

\begin{figure}[ht]
\begin{center}
\includegraphics[width=11.5cm]{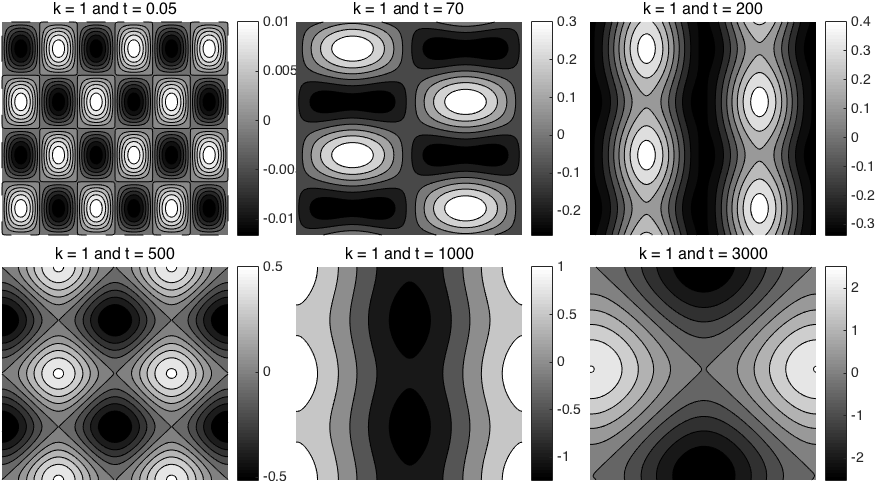}
\caption{Evolution of surface pattern for Example 4, with $k=1$.}  \label{fig:E4-surfacek1}
\end{center}
\end{figure}

\begin{figure}[ht]
\begin{center}
\includegraphics[width=11.5cm]{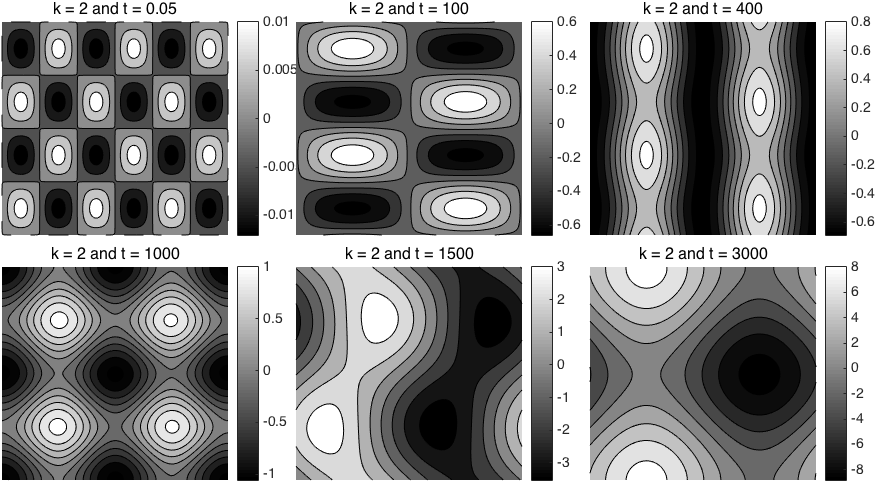}
\caption{Evolution of surface pattern for Example 4, with $k=2$.}  \label{fig:E4-surfacek2}
\end{center}
\end{figure}
\begin{figure}[ht]
\begin{center}
\includegraphics[width=11.5cm]{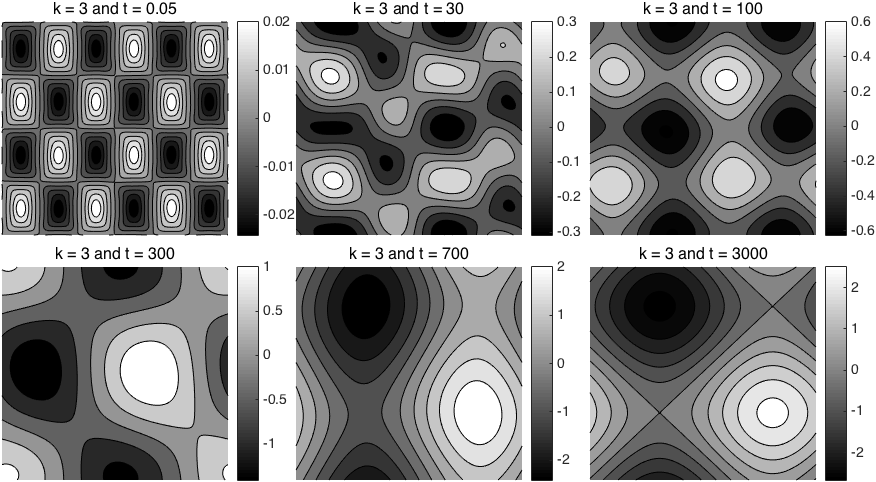}
\caption{Evolution of surface pattern for Example 4, with $k=3$.}  \label{fig:E4-surfacek3}
\end{center}
\end{figure}

Example 4 has the richest evolution process, i.e., the largest amount of ``flat" stages,
 among all examples presented in this paper.
 Moreover, it is the only example we have found so far such that there is a significant phase shift between the steady-state solutions for $k=1$ and $k=2$.
 Results for Example 4 are reported in Figures \ref{fig:E4-roughnessenerygy}-\ref{fig:E4-surfacek3}.

\subsection{Example 5}
In the fifth and the last example, we pick a completely random initial condition with values in $[-0.5,\, 0.5]$.
The initial condition was generated in Matlab using {\tt rand} and then saved in a file, in order to make sure that all tests start from the same initial condition
instead of another random generation.

\begin{figure}[ht]
\begin{center}
\includegraphics[width=10cm]{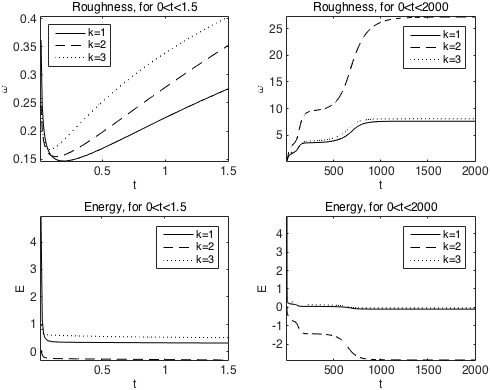}
\caption{Evolution of roughness and energy for Example 5. Left: early evolution; Right: entire evolution.} \label{fig:E5-roughnessenerygy}
\end{center}
\end{figure}

\begin{figure}[ht]
\begin{center}
\includegraphics[width=14cm]{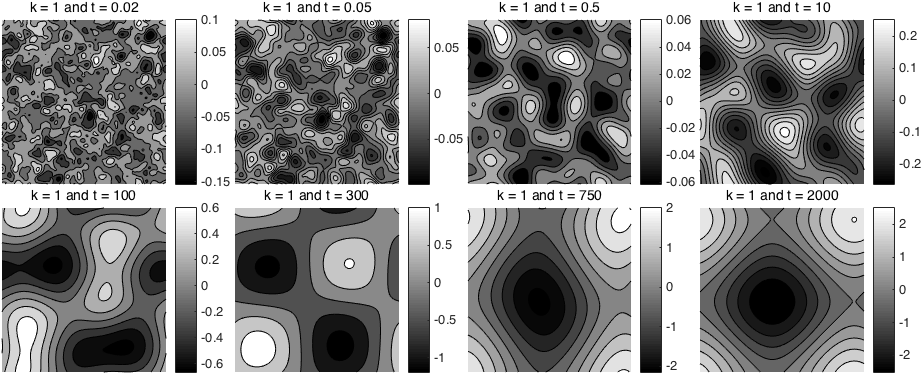}
\caption{Evolution of surface pattern for Example 5, with $k=1$.}  \label{fig:E5-surfacek1}
\end{center}
\end{figure}

\begin{figure}[ht]
\begin{center}
\includegraphics[width=14cm]{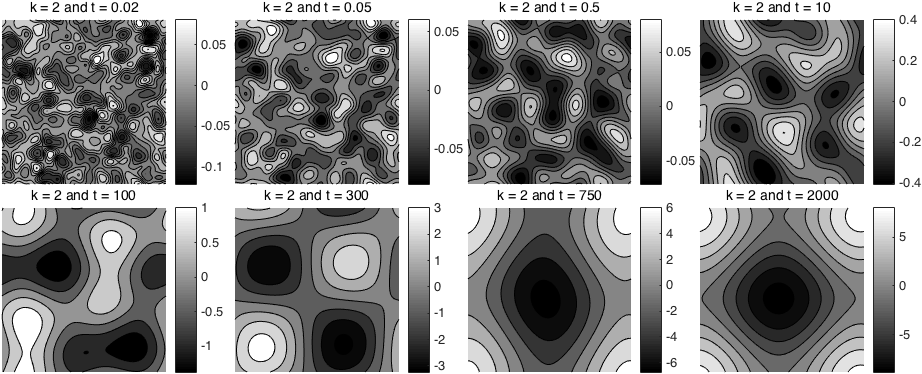}
\caption{Evolution of surface pattern for Example 5, with $k=2$.}  \label{fig:E5-surfacek2}
\end{center}
\end{figure}
\begin{figure}[ht]
\begin{center}
\includegraphics[width=14cm]{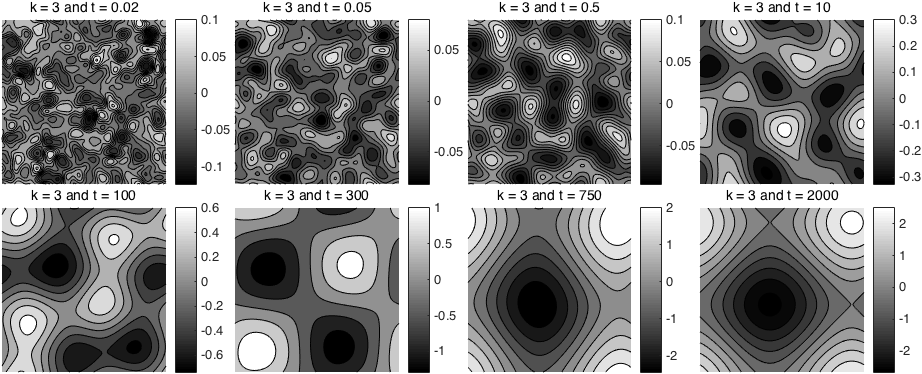}
\caption{Evolution of surface pattern for Example 5, with $k=3$.}  \label{fig:E5-surfacek3}
\end{center}
\end{figure}

Evolution of Example 5 is reported in Figures \ref{fig:E5-roughnessenerygy}-\ref{fig:E5-surfacek3}.
An obvious ``rough-smooth-rough'' pattern is observed in the beginning of the evolution, as shown in
Figure \ref{fig:E5-roughnessenerygy}. Correspondingly, one can see how the ``rough'' random initial condition
is smoothed out in Figures \ref{fig:E5-surfacek1}-\ref{fig:E5-surfacek3}.

More interestingly, we found that for all $k=1,2,3$, the solution goes through almost identical evolution stages,
as shown in  Figures \ref{fig:E5-surfacek1}-\ref{fig:E5-surfacek3}. Such a high similarity has not been observed in previous examples.

%

\appendix
\section{Proof of Theorem \ref{th3}} \label{appendix}
We first introduce a few notations.
Note that $a_+(\cdot,\cdot)$ is a bilinear form and is coercive on $H_{per}^2(\Omega)$. This allows us to define an $a_+$-projection $P_N^+$
from $H_{per}^2(\Omega)$ to $H_N$ by
$$
a_+(P_N^+ v, \phi) = a_+(v,\phi) \qquad \textrm{for all }\phi\in H_N.
$$
It is standard to show that
$$
\|(I-P_N^+) \phi\|_{H^s(\Omega)} \le C_A N^{-(m-s)} \|\phi\|_{H^m(\Omega)} \qquad\textrm{for all } \phi\in H_{per}^m(\Omega) \textrm{ with } s=0,1,2 \textrm{ and }m\ge 2,
$$
where $C_A$ is a positive constant independent of $\phi$ but might depend on $\epsilon$.

For simplicity, denote $h^i=h(\cdot, t_i)$ for $i=0,1,\ldots, M$.
Define
$$
e^i = P_N^+ h^i - h_N^i,\qquad \rho^i = h^i - P_N^+ h^i,
$$
then one has $\underline{h}^i = e^i+\rho^i$.
We further denote
$$
\theta^i = \partial_t h^i - \frac{h^i - h^{i-1}}{\delta t}.
$$
By Taylor expansion and the Schwarz inequality, one has
$$
\|\theta^i\|^2 = \| \frac{1}{\delta t} \int_{t_{i-1}}^{t_i} (t-t_{i-1}) h_{tt} (\cdot, t)\, dt\|^2
\le \frac{\delta t}{3} \int_{t_{i-1}}^{t_i} \|h_{tt}(\cdot,t)\|^2 \, dt.
$$

Subtracting Equation (\ref{eq:scheme}) from Equation (\ref{eq:weakeq}) gives
$$
\langle \theta^i, \phi\rangle +\frac{1}{\delta t}\langle \underline{h}^i - \underline{h}^{i-1},\phi\rangle
  + a_+(\underline{h}^i,\phi) = a_- (h_N^{i-1}, \phi) - a_-(h^i,\phi)\qquad\textrm{for all }\phi\in H_N.
$$
Here we have used the fact that $a_+(\cdot,\cdot)$ is a bilinear form.
The above equation can be further rewritten into
$$
\frac{1}{\delta t}\langle e^i - e^{i-1},\phi\rangle  + a_+(e^i,\phi)
= -\langle \theta^i, \phi\rangle - \frac{1}{\delta t}\langle \rho^i - \rho^{i-1},\phi\rangle
+ a_- (h_N^{i-1}, \phi) - a_-(h^i,\phi).
$$
By setting $\phi = 2\delta t \,e^i$, one gets
$$
\begin{aligned}
 &\|e^i\|^2 - \|e^{i-1}\|^2 + \|e^i-e^{i-1}\|^2 + 2\delta t \bigg((\chi+\gamma)\|\nabla e^i\|^2 + \epsilon^2\|\Delta e^i\|^2\bigg) \\
=& -2\delta t \langle \theta^i, e^i\rangle - 2\langle \rho^i - \rho^{i-1}, e^i\rangle
   + 2\delta t \bigg(a_- (h_N^{i-1}, e^i) - a_-(h^i,e^i)\bigg) \\
\triangleq& I_1 + I_2 + I_3 .
\end{aligned}
$$

By the property of $\theta^i$, one has for any constant $C_1$
$$
I_1 \le  \frac{1}{C_1} \delta t \|\theta^i\|^2 + C_1 \delta t \|e^i\|^2
 \le \frac{\delta t^2}{3C_1} \int_{t_{i-1}}^{t_i} \|h_{tt}(\cdot,t)\|^2 \, dt + C_1\delta t \|e^i\|^2.
$$
And for any constant $C_2$,
$$
\begin{aligned}
I_2 &= -2 \langle \int_{t_{i-1}}^{t_i} (I-P_N^+) h_t\, dt, e^i\rangle \le 2 \|\int_{t_{i-1}}^{t_i} (I-P_N^+) u_t\, dt\|\, \|e^i\| \\
  & \le 2\bigg( \delta t \int_{t_{i-1}}^{t_i} \|(I-P_N^+) h_t\|^2\, dt \bigg)^{1/2} \|e^i\| \\
  &\le \frac{1}{C_2}\int_{t_{i-1}}^{t_i} \|(I-P_N^+) h_t\|^2\, dt + C_2\delta t \|e^i\|^2 \\
  &\le \frac{C_A^2N^{-2m_1}}{C_2} \int_{t_{i-1}}^{t_i} \|h_t\|^2_{H^{m_1}(\Omega)} \, dt + C_2\delta t \|e^i\|^2 .
\end{aligned}
$$

By Lemma 2.7, one has $C_P \|\nabla \phi\| \le \|\Delta \phi\|$ for all $\phi\in H_{per}^2(\Omega)$,
where $C_P$ is the coefficient related to the Poincar\'{e} inequality. Denote by
$C_0 = \max\{\chi+\gamma, \, C_P^2\epsilon^2\}$, which is to protect against the case when $\chi+\gamma = 0$.
Then one has
$$
C_0 \|\nabla \phi\|^2 \le (\chi+\gamma)\|\nabla \phi\|^2 + \epsilon^2\|\Delta \phi\|^2.
$$

By Corollary \ref{cor:GkBounds}, There exists a positive constant $C_G$ such that $|\nabla_F^2 G_{-} (\vm{m})| \le C_G$ for all $\vm{m}\in \mathbb{R}^2$.
Therefore
$$
\begin{aligned}
I_3 &= 2\delta t\langle \nabla_F G_-(\nabla h_N^{i-1}) - \nabla_FG_- (\nabla h^i), \nabla e^i \rangle \le 2\delta t C_G \|\nabla (h_N^{i-1}-h^i)\| \, \|\nabla e^i\|\\[2mm]
&\le \frac{\delta t C_G^2}{C_0} \|\nabla (h_N^{i-1}-h^i)\|^2 + \delta t C_0 \|\nabla e^i\|^2 \\[2mm]
&\le \frac{3\delta t C_G^2}{C_0} \bigg(\|\nabla e^{i-1}\|^2 + \|\nabla \rho^{i-1}\|^2 + \|\nabla (h^i-h^{i-1})\|^2 \bigg)
  + \delta t \bigg( (\chi+\gamma)\|\nabla e^i\|^2 + \epsilon^2\|\Delta e^i\|^2\bigg)
\end{aligned}
$$
Next, note that
\begin{equation} \label{eq:I3-1}
\frac{3\delta t C_G^2}{C_0} \|\nabla e^{i-1}\|^2 \le \frac{3\delta t C_G^2}{C_0}\|e^{i-1}\|\,\|\Delta e^{i-1}\|
\le \frac{9 \delta t C_G^4}{2C_0^2\epsilon^2} \|e^{i-1}\|^2 + \frac{\delta t \epsilon^2}{2} \|\Delta e^{i-1}\|^2
\end{equation}
and
\begin{equation} \label{eq:I3-2}
\frac{3\delta t C_G^2}{C_0} \|\nabla \rho^{i-1}\|^2 = \frac{3\delta t C_G^2}{C_0} \|\nabla (I-P_N^+) h^{i-1}\|^2
\le \frac{3\delta t C_G^2 C_A^2}{C_0} N^{-2(m_2-1)} \|h^{i-1}\|_{H^{m_2}(\Omega)}^2.
\end{equation}
and
$$
\frac{3\delta t C_G^2}{C_0} \|\nabla (h^i-h^{i-1})\|^2 = \frac{3\delta t C_G^2}{C_0} \|\int_{t_{i-1}}^{t_i} \nabla h_t \, dt\|^2
\le \frac{3\delta t^2 C_G^2}{C_0} \int_{t_{i-1}}^{t_i} \|\nabla h_t\|^2 \, dt.
$$
When $i=1$, one shall replace the estimates in (\ref{eq:I3-1}) and (\ref{eq:I3-2}) by a combined term
$$
\frac{3\delta t C_G^2}{C_0} \|\nabla (h_N^0 - h^0)\|^2
= \frac{3\delta t C_G^2}{C_0} \|\nabla e^0\|^2 .
$$

Choose $C_1$ and $C_2$ to ensure $(C_1+C_2)\delta t \le \frac{1}{2}$.
Combine all the above, and sum up for $i=1,\ldots,n$, use the definition of $e^0$ and Lemma \ref{lem:PNapproximation}, one has
$$
\begin{aligned}
\frac{1}{2}\|e^n\|^2 + &\delta t \sum_{i=1}^n \bigg((\chi+\gamma) \|\nabla e^i\|^2 + \frac{\epsilon^2}{2}\|\Delta e^i\|^2 \bigg)
\le \|e^0\|^2 + \frac{3\delta t C_G^2}{C_0} \|\nabla e^0\|^2 \\
 & + C \delta t^2\bigg(\|h_{tt}\|_{L^2(0, t_n; L^2(\Omega))}^2 + \|h_t\|_{L^2(0, t_n; H^1(\Omega))}^2  \bigg)  \\
&+ C N^{-2m_1} \|h_t\|_{L^2(0, t_n; H^{m_1}(\Omega))}^2
+ C t_n N^{-2(m_2-1)} \|h\|_{L^{\infty}(0, t_{n-1}; H^{m_2}(\Omega))}^2 + C\delta t \sum_{i=1}^{n-1} \|e^i\|^2 \\
\le\, & C (\delta t^2 + N^{-2m_1} + N^{-2(m_2-1)})  + C\delta t \sum_{i=1}^{n-1} \|e^i\|^2,
\end{aligned}
$$
where $C$ is a general constant that may depend on $C_1$, $C_2$, $C_P$, $C_A$, $C_G$, $\gamma$, $\chi$, $\epsilon$,
but not on $\delta t$ or $N$.
Then the result follows from the Gronwall's inequality and the triangle inequality.

\medskip

\textbf{Acknowledgments:} We are heartily grateful to Dr. Xiaoming Wang. The topic was suggested by him and some difficulties were overcome with his helps.
Yanqiu Wang thanks the Key
Laboratory of Mathematics for Nonlinear Sciences, Fudan University, for the support during her visit.


\end{document}